\newcommand{\customlabel}[2]{%
   \protected@write \@auxout {}{\string \newlabel {#1}{{#2}{\thepage}{#2}{#1}{}} }%
   \hypertarget{#1}{#2}}
\newcommand{\RR}{\mathbb{R}}      
\newcommand{\QQ}{\mathbb{Q}}
\newcommand{\Qp}{\mathbb{Q}_p}
\newcommand{\Qbar}{\overline{\mathbb{Q}}}
\newcommand{\Fp}{\mathbb{F}_p}
\newcommand{\Pone}{\mathbb{P}^1} 
\newcommand{\mcO}{\mathcal{O}}
\newcommand{\mcF}{\mathcal{F}}
\def\@tocline#1#2#3#4#5#6#7{\relax
  \ifnum #1>\c@tocdepth 
  \else
    \par \addpenalty\@secpenalty\addvspace{#2}%
    \begingroup \hyphenpenalty\@M
    \@ifempty{#4}{%
      \@tempdima\csname r@tocindent\number#1\endcsname\relax
    }{%
      \@tempdima#4\relax
    }%
    \parindent\z@ \leftskip#3\relax \advance\leftskip\@tempdima\relax
    \rightskip\@pnumwidth plus4em \parfillskip-\@pnumwidth
    #5\leavevmode\hskip-\@tempdima
      \ifcase #1
       \or\or \hskip 1em \or \hskip 2em \else \hskip 3em \fi%
      #6\nobreak\relax
    \dotfill\hbox to\@pnumwidth{\@tocpagenum{#7}}\par
    \nobreak
    \endgroup
  \fi}
\def\@tocline#1#2#3#4#5#6#7{\relax
  \ifnum #1>\c@tocdepth 
  \else
    \par \addpenalty\@secpenalty\addvspace{#2}%
    \begingroup \hyphenpenalty\@M
    \@ifempty{#4}{%
      \@tempdima\csname r@tocindent\number#1\endcsname\relax
    }{%
      \@tempdima#4\relax
    }%
    \parindent\z@ \leftskip#3\relax \advance\leftskip\@tempdima\relax
    \rightskip\@pnumwidth plus4em \parfillskip-\@pnumwidth
    #5\leavevmode\hskip-\@tempdima
      \ifcase #1
       \or\or \hskip 1em \or \hskip 2em \else \hskip 3em \fi%
      #6\nobreak\relax
    \dotfill\hbox to\@pnumwidth{\@tocpagenum{#7}}\par
    \nobreak
    \endgroup
  \fi}
\numberwithin{equation}{subsection}
\numberwithin{equation}{subsection}
\newtheorem{theorem}[subsection]{Theorem}
\newtheorem{ass}[subsection]{Assumption}
\newtheorem{lemma}[subsection]{Lemma}
\newtheorem{coro}[subsection]{Corollary}
\newtheorem{prop}[subsection]{Proposition}
\theoremstyle{definition}
\newtheorem{defn}[subsection]{Definition}
\newtheorem{exam}[subsection]{Example}
\newtheorem{remark}[subsection]{Remark}
\theoremstyle{remark}
\newcommand{\mZ}{\mathbb{Z}}
\newcommand{\mR}{\mathbb{R}}
\newcommand{\mQ}{\mathbb{Q}}
\newcommand{\mF}{\mathbb{F}}
\newcommand{\mC}{\mathbb{C}}
\newcommand{\mA}{\mathbb{A}}
\newcommand{\mP}{\mathbb{P}}
\newcommand{\cE}{\mathcal{E}}
\newcommand{\brk}[1]{ \left\lbrace #1 \right\rbrace }
\newcommand{\pwr}[1]{\left( #1 \right)}
\newcommand{\lrra}{\longrightarrow}
\newcommand{\floor}[1]{\left\lfloor #1 \right\rfloor}
\newcommand{\fps}[1]{\llbracket #1 \rrbracket}
\def\quotient#1#2{\raise1ex\hbox{$#1$}{\Large/} \lower1ex\hbox{$#2$}}
\DeclareMathOperator{\tors}{tors}
\DeclareMathOperator{\Jac}{J}
\DeclareMathOperator{\MV}{MV}
\DeclareMathOperator{\New}{New}
\DeclareMathOperator{\Trop}{Trop}
\DeclareMathOperator{\conv}{conv}
\newcommand{\Cp}{\mathbb{C}_p}
\newcommand{\Fpbar}{\overline{\Fp}}
\newcommand{\Pbar}{\overline{P}}
\begin{document}
\title{Irrational points on hyperelliptic curves
}

\author{Joseph Gunther}
\address{Department of Mathematics, Universit\'e Paris-Sud, Orsay, France F-91405}
\email{jgunther7@gmail.com}

\author{Jackson S. Morrow}
\address{Department of Mathematics, Emory University,
Atlanta, GA 30322}
\email{jmorrow4692@gmail.com}

\begin{abstract}
We consider genus $g$ hyperelliptic curves over $\mathbb{Q}$ with a rational Weierstrass point, ordered by height. 
If $d<g$ is odd, we prove, under an assumption, that there exists $B_d$ such that a positive proportion of these curves have at most $B_d$ points of degree $d$. Similarly, if $d<g$ is even, we conditionally bound degree $d$ points not pulled back from points of degree $d/2$ on the projective line. 
Furthermore, we show one may take $B_2 = 24$ and $B_3 = 114$. 

Our proofs proceed by refining recent work of Park, which applied tropical geometry methods to symmetric power Chabauty, and then applying results of Bhargava and Gross on average ranks of Jacobians of hyperelliptic curves. 
\end{abstract}
\date{\today}
\maketitle

\section{Introduction}\label{Intro}
Let $K$ be a number field, and let $C/K$ be a curve of genus $g \geq 2$.  In 1983, Faltings \cite{faltingsmordell} proved that the set $C(K)$ of $K$-rational points is finite.  Given this, one can ask how the finite quantity $\#C(K)$ varies in families of curves.  Recently, multiple works have considered this question, for the family of all hyperelliptic curves over $\QQ$ with a rational Weierstrass point \cite{poonen2014most, romanothorne}, the family with a rational non-Weierstrass point \cite{shankarwang}, and the entire family of hyperelliptic curves over $\QQ$ \cite{bhargava2013most}.

In this paper, for a hyperelliptic curve $C/\QQ$, instead of rational points we consider the \textit{degree} $d$ points of $C$, which we take to mean the set $$\brk{P \in C(\Qbar) \ \left| \ [\QQ(P):\QQ] = d\right.}.$$  Since $C$ is defined over $\QQ$, this set is partitioned into $d$-tuples of Galois-conjugate points.

Before stating our main theorems, we consider an extended example: quadratic points, i.e.~$d=2$. Because we allow the quadratic extension to vary, there are infinitely many such points: for almost any point of $\Pone(\QQ)$, its pre-image under the hyperelliptic map will be a pair of conjugate quadratic points on $C$.  We will call these \textit{expected} quadratic points. More simply, for a hyperelliptic equation $y^2=f(x)$, these are the quadratic points given by plugging in a rational number for $x$, and then solving for $y$.

But there can also be \textit{unexpected} quadratic points, whose $x$-coordinate is irrational but whose $y$-coordinate is contained in the same quadratic field.  For example, the genus 4 curve defined by $y^2=x^9+x^3-1$ contains infinitely many expected points, such as $(0,\pm i), (-1, \pm \sqrt{-3})$, and $(2,\pm \sqrt{519})$, but also contains unexpected points like $(\pm i, \pm i)$, $(\zeta_3, \pm 1)$, and $(-\zeta_3, \pm \sqrt{-3})$, where $\zeta_3$ is a primitive third root of unity.  In general, it is no small feat to compute these points explicitly for a given curve.

\begin{exam}
Let $C/\mQ$ be the hyperelliptic curve with affine model given by
$$C\colon y^2 = f(x) = x(x^2 + 2)(x^2 + 43)(x^2 + 8x - 6).$$
In \cite[Section~6.1]{siksek2009chabauty}, Siksek determined the set of quadratic points on $C$. Besides the infinitely many expected quadratic points of the form $(x,\pm \sqrt{f(x)})$, for $x \in \QQ$, there are exactly 9 pairs of unexpected quadratic points on $C$. For example, there are the three pairs below:
\begin{align*}
\mathcal{Q}_1 &= \brk{(\sqrt{6},56\sqrt{6}),(-\sqrt{6},-56\sqrt{6})},  \mathcal{Q}_2 = \brk{(\sqrt{-2},0),(-\sqrt{-2},0)}, & \\
\mathcal{Q}_3 &= \brk{\pwr{\frac{-164 + \sqrt{22094}}{49},\frac{257704352 - 1648200\sqrt{22094}}{823543}},\text{ conjugate}}. & 
\end{align*}
\end{exam}

\begin{exam}
The modular curve $X_0(29)$ is a genus 2 curve with affine model 
\[
y^2 + (-x^3 - 1)y = -x^5 - 3x^4 + 2x^2 + 2x - 2.
\]
In \cite[Table 5]{bruin_najman_2015}, Bruin and Najman determined that there are exactly 4 pairs of unexpected quadratic points on $X_0(29)$:
\begin{align*}
P_1 &= \pwr{\sqrt{-1} - 1, 2\sqrt{-1} + 4}, & P_2 &= \pwr{\sqrt{-1} - 1, \sqrt{-1} - 1}, \\
P_3 &= \pwr{\frac{1}{4}(\sqrt{-7} + 1), \frac{1}{16}(-11\sqrt{-7} - 7)}, & P_4 &= \pwr{\frac{1}{4}(\sqrt{-7} + 1), \frac{1}{8}(5\sqrt{-7} + 9)},
\end{align*}
along with their respective images under the hyperelliptic involution.

Addtional works of Ozman--Siksek \cite{ozman_siksek2018} and Box \cite{box2019} have classified the unexpected quadratic points on the modular curves $X_0(N)$ for various values of $N$.
\end{exam}

The examples above demonstrate a general phenomenon: by further work of Faltings \cite[p.~550]{faltings91}, we know that for any hyperelliptic curve of genus $g \geq 4$, there are only finitely many of these \textit{unexpected} quadratic points. Thus, one can ask how many there are on a typical hyperelliptic curve.

To make that question rigorous, we need a way of ordering curves. A genus $g$ hyperelliptic curve $C$ over $\QQ$ has a marked rational Weierstrass point $\infty$ if and only if it can be given an affine model of the form 
\begin{equation}\label{eqn:oddhyperelliptic}
y^2=f(x)=x^{2g+1} + a_2x^{2g-1} + a_3x^{2g-2} +  \dots + a_{2g+1},
\end{equation}
with $f(x) \in \mathbb{Z}[x]$ separable, such that $\infty$ is not contained in this affine patch. Furthermore, $C$ has a unique such \textit{minimal} equation, for which there is no prime $p$ such that $p^{2i} \mid a_i$ for each $i \geq 2$.  Define the \textit{height} of $C$ to be $$H(C)\coloneqq\max\brk{|a_i|^{1/i}},$$ where the $a_i$'s are coefficients for the minimal equation of $C$.  

To study this question, we use and refine work of Park \cite{park16}, which uses tropical intersection theory. However, the techniques of that pre-print seem to be missing a technical hypothesis, along the lines of ($\dagger$) in our Assumption \ref{ass:Park}, in order to be valid; see Section \ref{sec:proofmain} and Remark \ref{remark:assdiscussion} for an explanation.  This is the source of the conditional nature of some of our results.

We can now state our first conditional theorem.

\begin{theorem}\label{hyperthm}
Suppose Assumption \ref{ass:Park} holds. Then for each $g > 2$, a positive proportion of genus $g$ hyperelliptic curves over $\QQ$ with a rational Weierstrass point, when ordered by height, have at most 24 quadratic points not obtained by pulling back points of $\Pone(\QQ).$
\end{theorem}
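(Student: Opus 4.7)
The plan is to combine a symmetric-square Chabauty bound in the spirit of Park \cite{park16}, refined under Assumption \ref{ass:Park}, with the Bhargava--Gross theorem on average $2$-Selmer ranks. The starting observation is that a quadratic point of $C$ not obtained by pulling back a point of $\Pone(\QQ)$ corresponds precisely to a $\QQ$-point of $\Sym^2(C)$ lying outside the image of the closed immersion $\Pone \hookrightarrow \Sym^2(C)$ induced by the hyperelliptic map, so bounding these unexpected quadratic points is exactly a symmetric-square Chabauty problem on $\Sym^2(C)$ modulo a distinguished rational curve.

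For the arithmetic input, I would invoke Bhargava--Gross on the average size of the $2$-Selmer group of the Jacobian of a genus $g$ hyperelliptic curve with a rational Weierstrass point, ordered by height. Because this average is bounded by a constant independent of $g$, a positive proportion of the curves in the family have Mordell--Weil rank at most some absolute constant (and, in fact, a positive proportion have rank $0$). In particular, for every $g > 2$, a positive proportion of such curves satisfy the Chabauty rank bound $r \leq g - 2$ required for $\Sym^2(C)$. This supplies the positive-proportion subfamily on which the Chabauty machinery can be run.

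For each such curve $C$ and a suitable prime $p$ of good reduction, Park's method produces $g - 2$ independent vanishing differentials on $\Sym^2(C)$ from the annihilator of the $p$-adic closure of the Mordell--Weil group in $H^0(\Sym^2(C)_{\QQ_p}, \Omega^1)$, and controls their common $p$-adic zero locus via tropical intersection theory on the skeleton of $\Sym^2(C)^{\an}$. Under Assumption \ref{ass:Park}, which rules out a degenerate tropical configuration silently excluded in \cite{park16} (as discussed in Section \ref{sec:proofmain} and Remark \ref{remark:assdiscussion}), the tropical intersection count is valid and yields an effective bound on the number of $\QQ_p$-points of $\Sym^2(C)$ in the common vanishing locus, hence on the number of $\QQ$-points outside the $\Pone$-component.

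The final step is to extract the explicit constant $24$. Choosing $p$ of good reduction so that the skeleton of $\Sym^2(C)^{\an}$ is as simple as possible, one carries out the tropical intersection calculation on $\Sym^2$ of that skeleton, separates off the contribution of the tropical image of $\Pone$ (which accounts for the expected quadratic points pulled back from $\Pone(\QQ)$), and bounds the residual intersection number uniformly in $g$. The hard part is the combinatorial bookkeeping: one must verify Assumption \ref{ass:Park} produces a clean intersection count on the symmetric square, isolate the $\Pone$-component from the rest in a way that does not depend on delicate features of $C$, and confirm that the bound $24$ is uniform across the positive-proportion subfamily supplied by Bhargava--Gross. Combining the refined tropical bound with the positive-density rank input then yields the theorem.
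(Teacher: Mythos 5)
Your high-level strategy matches the paper's: interpret unexpected quadratic points as rational points of the symmetric square away from the hyperelliptic $\Pone$, feed in Bhargava--Gross for a positive-proportion rank condition, and control the $p$-adic zero locus via Park's tropical/mixed-volume machinery under Assumption \ref{ass:Park}. But there is a genuine gap exactly where you defer to ``combinatorial bookkeeping'': the constant $24$ is never derived, and the rank input you invoke is too weak to derive it. The paper does not work with the Chabauty condition $r \leq g-2$; it restricts to curves with \emph{geometrically simple Jacobian and rank $r \leq 1$} (Proposition \ref{simple} and Corollary \ref{goodrankcor}, applied inside a congruence family). The rank-$\leq 1$ hypothesis is what makes Stoll's bound (Theorem \ref{stollbound}) give $\sum_{\Pbar} n(\Lambda_C,\Pbar) \leq 2$, which in turn forces the Newton polygons of the $F_{\omega_i}^2$ to lie in $\conv\{(1,0),(3,0),(0,1),(0,3)\}$ and yields the per-residue-disk mixed-volume bound $3^2-1=8$; with only $r \leq g-2$ the vanishing orders can grow with $g$ and no uniform constant comes out. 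Geometric simplicity is also needed for the isolatedness lemmas (Lemmas \ref{lem:isoladedpoints}--\ref{lem:rankonelemmadegreed}) that justify counting zero-dimensional components at all.

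The second missing idea is how the count is localized. Your suggestion to choose $p$ so that ``the skeleton of $\Sym^2(C)^{\an}$ is as simple as possible'' does not reflect the mechanism: the curves have good reduction at the chosen prime, so the relevant structure is the finite set of residue polydisks mod $p$, not a skeleton. The paper fixes $p=3$ and imposes the congruence family of Lemma \ref{goodfamily}, forcing $C_{\mathbb{F}_3}(\mathbb{F}_3)=\{\overline{\infty}\}$ and $\#C_{\mathbb{F}_3}(\mathbb{F}_9)=7$, so that only the disk over $(\overline{\infty},\overline{\infty})$ and the three disks over $\{(\overline{i},\alpha),(\overline{i},-\alpha)\}$ can contain unexpected quadratic pairs. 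The disk at infinity contributes at most $8$ ordered, hence $4$ unordered, pairs (Lemma \ref{Fpreductionbound}); Stoll's bound then shows at most one of the three $\mathbb{F}_9$-fibers can have $n(\Lambda_C,\cdot)=1$ and contribute (at most $8$ pairs, Lemma \ref{Fptworeductionbound}), the others contributing nothing. This gives $4+8=12$ unordered pairs, i.e.\ $24$ points. Without the rank-$\leq 1$/simple-Jacobian hypotheses, the choice $p=3$ with the explicit small-point-count family, and the Stoll-bound accounting across residue disks, the bound $24$ is an assertion rather than a conclusion, so your proposal as written does not yet prove the theorem.
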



More precisely, let $\mcF_g$ denote the set of $\QQ$-isomorphism classes of genus $g$ hyperelliptic curves defined over $\QQ$, with a marked rational Weierstrass point.  The above says that if $\mcF'_g \subset \mcF_g$ corresponds to those curves satisfying the conditions of Theorem \ref{hyperthm}, then $$\liminf_{X \to \infty} \frac{\#\{C \in \mcF'_g \ | \ H(C) < X\}}{\#\{C \in \mcF_g \ | \ H(C) < X\}} >0.$$

The bound of Theorem \ref{hyperthm} does not hold for all hyperelliptic curves, as shown by the following example, told to us by Michael Stoll.
\begin{exam}
Let $f_1(x), \ldots, f_9(x)$ be distinct irreducible monic quadratic polynomials with rational coefficients.  Write their product as the square of a degree 9 polynomial, plus a remainder polynomial $r(x)$ of degree at most 8. Then $y^2=-r(x)$ will usually have at least 36 ``unexpected" quadratic solutions.
\end{exam}

Next we consider cubic points, i.e.~degree 3 points. Unlike the case of $d=2$, where the geometry --- in this case, the existence of a 2:1 map to $\Pone$ --- imposes infinitely many quadratic points, there need not be any cubic points.  We prove the following theorem on their sparsity.

\begin{theorem}\label{cubicthm}
Suppose Assumption \ref{ass:Park} holds. Then for each $g > 3$, a positive proportion of genus $g$ hyperelliptic curves over $\QQ$ with a rational Weierstrass point, when ordered by height, have at most 114 cubic points.
\end{theorem}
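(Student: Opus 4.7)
The plan is to follow the same template as the proof of Theorem \ref{hyperthm}, adapted to the case $d=3$. A key simplification in the odd-degree case is that there are no ``expected'' cubic points arising as pullbacks from $\Pone(\QQ)$ under the hyperelliptic map, so all cubic points on $C$ can be treated uniformly and there is no need to excise a sublocus of $C^{(3)}$ before applying the Chabauty--Coleman machinery.

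First, I would apply Park's tropical refinement of symmetric power Chabauty (valid under Assumption \ref{ass:Park}) to the third symmetric power $C^{(3)}$, embedded via an Abel--Jacobi map into $\Jac(C)$ after choosing a base point at the rational Weierstrass point. At a suitable prime $p$ of good reduction, the method requires the Mordell--Weil rank $r$ of $\Jac(C)(\QQ)$ to satisfy $r < g - 3$, so that the space of vanishing differentials on $\Jac(C)$ pulled back to $C^{(3)}$ cuts out a proper analytic subset. Under this rank hypothesis, Park's machinery bounds $\#C^{(3)}(\QQ)$ in terms of a tropical intersection number computed on the skeleton of the Berkovich analytification of $C^{(3)}$. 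The numerical constant $114$ arises from carefully bounding this tropical intersection number over all possible reduction types of three points on the relevant metric graph; this is the odd-degree analog of the constant $24$ in Theorem \ref{hyperthm}.

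Second, I would invoke the work of Bhargava and Gross on average $2$-Selmer ranks in the family of hyperelliptic curves with a marked rational Weierstrass point: their results imply that, for each $g$, a positive proportion of $C \in \mcF_g$ (ordered by height) satisfy $\rank \Jac(C)(\QQ) = 0$. For $g > 3$, the condition $0 < g - 3$ then guarantees that the rank hypothesis needed for symmetric cube Chabauty is met on a positive-proportion subfamily, and combining this with the bound of $114$ from the previous step yields the statement.

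The main obstacle is the same as in Theorem \ref{hyperthm}: verifying the technical hypothesis $(\dagger)$ from Assumption \ref{ass:Park}, which is precisely what forces the result to remain conditional. A secondary, genuinely new difficulty for $d=3$ is the combinatorial enumeration yielding the sharp constant $114$: whereas the $d=2$ case required classifying tropical configurations of two points, the $d=3$ case requires analyzing the substantially richer family of configurations of three points on the reduction graph, and tracking their contributions to the relevant tropical intersection product. I would expect this combinatorial step to be the most technically involved portion of the proof.
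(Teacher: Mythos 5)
Your overall skeleton (symmetric-power Chabauty refined tropically \`a la Park, combined with Bhargava--Gross and Assumption \ref{ass:Park}) is the right one, but there are two genuine gaps. First, Bhargava--Gross does \emph{not} give a positive proportion of curves with $\rank \Jac(C)(\QQ)=0$; their average 2-Selmer bound only yields Corollary \ref{goodrankcor}: at least $25\%$ of curves (even within a congruence family) have rank $0$ \emph{or} $1$, and one cannot currently rule out that almost all of these have rank exactly $1$. The paper therefore works with $r\leq 1$, and this forces extra input your sketch omits: one needs the Jacobian to be geometrically simple for $100\%$ of curves (Proposition \ref{simple}) so that, even when $J^{\Lambda_C}$ is $1$-dimensional, the intersection $J^{\Lambda_C}\cap W_d$ consists of isolated points (Lemmas \ref{lem:isoladedpoints}, \ref{isolatedincurve}, \ref{lem:rankonelemmadegreed}). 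Your proposed rank hypothesis $r< g-3$ does not suffice for this isolatedness (that is exactly the excess-intersection issue Park's Assumption 1.3 is about), and in any case the proportion statement you need is not available for rank $0$.

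Second, the constant $114$ does not come from an enumeration of tropical configurations over ``all possible reduction types.'' The paper fixes $p=3$ and restricts to a congruence family (Lemma \ref{goodfamily}) whose mod-$3$ reduction has as few low-degree points as possible: $C_{\mathbb{F}_3}(\mathbb{F}_3)=\{\overline{\infty}\}$ and $\#C_{\mathbb{F}_3}(\mathbb{F}_9)=7$. Then, residue polydisk by residue polydisk, it bounds ordered triples of conjugate cubic points using Stoll's bound on vanishing orders, explicit Newton polygon estimates (Lemmas \ref{newtonbounds}, \ref{newtonboundscubic}), and the mixed-volume bound $3^3-1=26$ from Theorem \ref{parkthm} and Example \ref{mvexample}: at most $\lfloor 26/6\rfloor=4$ unordered triples reducing to $\overline{\infty}$, at most $26$ ordered triples in the single possible residue class with two $\mathbb{F}_9$-points (only one such class can be bad, by Theorem \ref{stollbound}), and at most $8$ triples reducing to $\mathbb{F}_{27}$-points (the number of cubic-point triples of $\mP^1_{\mathbb{F}_3}$, since Lemma \ref{Fpthreereductionboundcubic} excludes anything beyond the centers). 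This gives $4+26+8=38$ unordered triples, i.e.\ $114$ cubic points. Without choosing the prime and the congruence family to control the special fiber, your approach would only reproduce the generic (much larger) bound $B_d$ of Theorem \ref{thm:degreedbound}, not $114$.
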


The $d=2$ and $d=3$ cases turn out to be prototypical, and we can now state our main theorem concerning points of arbitrary degree $d$.

\begin{theorem}\label{thm:degreedbound}
Let $d>1$ be a positive integer and suppose Assumption \ref{ass:Park} holds. 
\begin{enumerate}
\item If $d$ is odd, there exists a number $B_d$ such that for each $g > d$, a positive proportion of genus $g$ hyperelliptic curves over $\QQ$ with a rational Weierstrass point have at most $B_d$ points of degree $d$.

\item If $d$ is even, there exists a number $B_d$ such that for each $g > d$, a positive proportion of genus $g$ hyperelliptic curves over $\QQ$ with a rational Weierstrass point have at most $B_d$ points of degree $d$ not obtained by pulling back degree $\frac{d}{2}$ points of $\Pone$.

\item We may take $B_2 = 24$ and $B_3 = 114$.
\end{enumerate}
\end{theorem}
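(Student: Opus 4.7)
The proof combines two inputs: the Bhargava--Gross results on average ranks of Jacobians of hyperelliptic curves with a rational Weierstrass point, and a refined symmetric-power Chabauty argument using Park's tropical intersection machinery under Assumption \ref{ass:Park}. The strategy is that for most curves in $\mathcal{F}_g$ the Jacobian has small Mordell--Weil rank, so Chabauty applies on $C^{(d)}$, and the tropical intersection yields an explicit upper bound on the number of degree $d$ points.

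First I would invoke Bhargava--Gross to extract, for each $g > d$, a subset $\mathcal{F}_g' \subset \mathcal{F}_g$ of positive lower density consisting of curves $C$ whose Jacobian $J = \Jac(C)$ satisfies $\rank J(\QQ) = 0$, and in particular $\rank J(\QQ) < g - d$. For such a curve the Chabauty condition is satisfied on the $d$-th symmetric power: the $p$-adic closure of $J(\QQ)$ inside $J(\QQ_p)$ has codimension $g > d$, so there exist $g - d$ linearly independent annihilating differentials on $J$. Any degree $d$ point of $C$ produces a $\QQ$-rational point of $C^{(d)}$, so bounding the rational points of $C^{(d)}$ bounds the number of Galois orbits of degree $d$ points.

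Next, at an auxiliary prime $p$ of good reduction, I would run the tropical Chabauty argument of Park on $C^{(d)}$, with the technical correction provided by Assumption \ref{ass:Park}. Concretely, one tropicalizes the Abel--Jacobi image of $C^{(d)}$ together with the $(g-d)$-dimensional linear subspace cut out by the annihilating differentials, and invokes a tropical B\'ezout-type inequality to bound the number of $p$-adic zeros. The output is an explicit combinatorial quantity: the tropical intersection number of a Brill--Noether-type cycle with a complementary-dimensional linear space on the tropicalization of the Jacobian. In the even case, one must excise the locus $\Sym^{d/2}(\Pone)$ pulled back through the hyperelliptic map, since those points form a positive-dimensional subvariety of $C^{(d)}$ that is not controlled by Chabauty; subtracting its contribution leaves only the ``unexpected'' degree $d$ points.

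Finally, for parts (2) and (3), I would specialize the tropical intersection count to $d = 2$ and $d = 3$. The relevant cycles live on the skeleton of the symmetric product of the reduction mod $p$ of the hyperelliptic curve, and the intersection numbers can be enumerated by hand. The main obstacle is precisely this last step: converting the tropical intersection number into the sharp constants $B_2 = 24$ and $B_3 = 114$ requires careful bookkeeping over all strata of $C^{(d)}$ (in particular, the diagonal and, for $d = 2$, the hyperelliptic locus), together with verifying that Assumption \ref{ass:Park} rules out all pathological degenerate intersection behavior. The odd case $d = 3$ cleanly exhibits the general mechanism, whereas the even case $d = 2$ illustrates the extra bookkeeping needed to discard the pulled-back points; for general odd $d$ the same intersection-theoretic recipe produces some finite $B_d$, and similarly for even $d$ once the hyperelliptic locus is removed.
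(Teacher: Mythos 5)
Your overall architecture (Bhargava--Gross to get small rank, then symmetric-power Chabauty with Park's tropical machinery under Assumption \ref{ass:Park}) matches the paper, but there are concrete gaps. First, you cannot extract a positive-proportion subfamily with $\operatorname{rank} J(\QQ)=0$: the 2-Selmer average of Bhargava--Gross only yields Corollary \ref{goodrankcor}, i.e.\ at least $25\%$ of curves have rank $0$ \emph{or} $1$ (and the dichotomy there explicitly allows the rank-$0$ proportion to be zero). The paper therefore works with $r\leq 1$ throughout, and this is not merely a cosmetic change: the rank hypothesis is used not only for the Chabauty codimension condition $r\leq g-d$ but, crucially, to ensure that $J^{\Lambda_C}$ has dimension at most $1$, which together with \emph{geometric simplicity} of the Jacobian (Proposition \ref{simple}, via Hilbert irreducibility and Zarhin's criterion --- absent from your proposal) forces every unexpected conjugate $d$-tuple to be an isolated, zero-dimensional point of the analytic locus $(C^d)^{\Lambda_C}$ (Lemmas \ref{lem:isoladedpoints}--\ref{lem:rankonelemmadegreed}). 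Without this step Park's mixed-volume theorem (Theorem \ref{parkthm}) has nothing to count, and your ``tropical B\'ezout on the tropicalized Jacobian'' formulation does not supply it.

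Second, your proposal does not explain how the bound becomes independent of $g$, which is the main technical content of the paper. A global intersection of the Abel--Jacobi image of $C^{(d)}$ with a codimension-$(g-d)$ linear condition on (a skeleton of) $J$ would a priori produce a bound depending on $g$ and on the auxiliary prime. The paper instead works residue polydisk by residue polydisk at a prime $p>d^2+3$: Stoll's bound (Theorem \ref{stollbound}) gives $\sum_{\Pbar} n(\Lambda_C,\Pbar)\leq 2r\leq 2$, the carefully constructed forms of Lemmas \ref{universalform}--\ref{bestuniversalbasis} realize these vanishing orders simultaneously, a Newton-polygon computation confines each $\New_{1/d^2}(F^d_{\omega_i})$ to a fixed polytope with mixed volume $3^d-1$, and the number of relevant residue polydisks is at most $(d(2p^d+1))^d$ because $C$ is hyperelliptic; Bertrand's postulate bounds $p$ in terms of $d$, yielding $B_d=(3d(2p^d+1))^d$. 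Finally, the explicit constants in part (3) do not come from ``specializing'' this count: $B_2=24$ and $B_3=114$ require restricting to the congruence family of Lemma \ref{goodfamily}, whose reduction mod $3$ has $C_{\mathbb{F}_3}(\mathbb{F}_3)=\{\overline{\infty}\}$ and only six further $\mathbb{F}_9$-points, followed by a case analysis (Lemmas \ref{Fpreductionbound}--\ref{Fptworeductionbound} and \ref{Fpreductionboundcubic}--\ref{Fpthreereductionboundcubic}) of how Stoll's bound distributes vanishing among the few residue disks; a direct specialization of the general formula gives constants orders of magnitude larger.
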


\begin{remark}
\begin{itemize}
\item []
\item If $d=1$, we may unconditionally take $B_1=1$, by \cite[Theorem 10.3]{poonen2014most} (in the case $g>2$) and \cite[Theorem 1.2]{romanothorne} ($g=2$).
\item The hypothesis $g > d$ is natural: that is exactly when the symmetric product $C^{(d)}$ is of general type, for a curve of genus $g \geq 2$. Since a degree $d$ point of $C$ gives a rational point of $C^{(d)}$, this is when one would expect them to be rare.
\end{itemize}
\end{remark}

\subsection{Terminology}\label{subsec:term}
While we do not use the term in theorem statements, we will call a point of even degree $d$ on a hyperelliptic curve \textit{unexpected} if it does not map to a degree $d/2$ point on $\mP^1$. If $d$ is odd, we call any point of degree $d$ \textit{unexpected}.

Throughout the paper, we use ``asymptotically" when considering hyperelliptic curves of fixed genus with a marked rational Weierstrass point, with increasing height and ``congruence conditions" when considering the coefficients of the minimal equation in \eqref{eqn:oddhyperelliptic}.

\subsection{Overview of paper}\label{subsec:overviewquadratic}
In Section \ref{sec:arithmeticJacobians}, we collect results on the arithmetic of hyperelliptic Jacobians and prove that for a fixed genus $g\geq 2$, asymptotically $100\%$ of hyperelliptic curves over $\mQ$ of genus $g$ with a rational Weierstrass point have 
finitely many unexpected degree $d$ points.

The main part of our proof of Theorem \ref{thm:degreedbound} involves refining recent work of Park \cite{park16} that partially generalized the Chabauty--Coleman method for computing rational points on curves of genus $g\geq 2$ to higher degree points.  
One aspect of her work yields an effective bound $B(p,g)$ on the number of unexpected degree $d$ points on a hyperelliptic curve $C$ as above with the additional conditions: the Mordell--Weil rank of the curve's Jacobian is no more than 1, $C$ has good reduction at $p$, and there exists a basis of 1-forms $\brk{\omega_1,\dots,\omega_d}$ which satisfy a genericity condition on the tropicalization of their local expansions as $p$-adic integrals. 
Besides clarifying the need for that technical assumption, our paper removes the dependence of the above result on the genus $g$, in order to get constants that only depend on $d$ in Theorems \ref{hyperthm}, \ref{cubicthm}, and \ref{thm:degreedbound}.

In Section \ref{sec:padicprem}, we recall results on $p$-adic integration and prove some auxiliary lemmas. In Section \ref{sec:analyticloci}, we begin removing the dependence on the genus from Park's bound above.
Work of Bhargava and Gross \cite{bhargavagross} tells us that many curves have Jacobian of rank at most $1$, and thus our new bound applies to them. 
In Section \ref{sec:proofmain}, using results from tropical geometry, we prove Theorem \ref{thm:degreedbound} using Newton polygon and mixed volume computations. 
In Sections \ref{boundingquadratic} and \ref{boundingcubic}, we prove our explicit results for $d=2$ and $3$.

Many of our results hold unconditionally for any individual curve that satisfies the tropical technical hypothesis mentioned above.  The conditional part of the paper is the assumption that enough curves do satisfy it.

\subsection{Related results}
We conclude the introduction by describing some related results in the literature. All curves below are defined over $\QQ$.

In \cite{bhargavagross}, Bhargava and Gross showed that for $g \geq 2$, a positive proportion of genus $g$ hyperelliptic curves with a marked rational Weierstrass point have at most 3 rational points, and that a majority have less than 20 rational points.  In \cite{poonen2014most}, Poonen and Stoll showed that in fact, for $g \geq 3$, a positive proportion of these curves have no other rational points besides the marked point.  (Romano and Thorne \cite{romanothorne} recently proved this for $g=2$.) Furthermore, that proportion of curves tends to 1 as $g$ grows.

Shortly after Poonen and Stoll's work, Shankar and Wang \cite{shankarwang} proved that for $g \geq 9$, a positive proportion of genus $g$ hyperelliptic curves with a marked rational non-Weierstrass point have only the two guaranteed rational points (the marked point and the other point in its fiber).  Again, that proportion tends to 1 as $g$ grows.  Next, Bhargava \cite{bhargava2013most} showed that for $g \geq 2$, a positive proportion (again tending to 1) of all genus $g$ hyperelliptic curves have no rational points.

The question of higher-degree points has been considered previously for families of hyperelliptic curves, though only in the case of odd-degree points.  While every hyperelliptic curve has (infinitely many!)~points of each even degree, the geometry of a general hyperelliptic curve does not impose any points of odd degree.  In \cite{bhargavagrosswang}, Bhargava, Gross, and Wang showed that for $g \geq 2$, a positive proportion of all locally soluble genus $g$ hyperelliptic curves have no points over any odd-degree extension of $\QQ$.  Moreover, for a fixed odd $m$, the proportion of locally soluble curves without a degree $m$ point tends to 1 as $g$ grows.

Bhargava, Gross, and Wang's results work by showing that many curves have no rational odd-degree divisors at all.  The curves in the family considered in this paper (those with a rational Weierstrass point) always have such divisors, so our Theorem \ref{cubicthm} is disjoint from (but complementary to) their work.  In particular, since degree $d$ points on a curve correspond to rational points of the $d$-th symmetric power of $C$, our results represent some of the first work on bounding rational points in families of higher-dimensional varieties that do have {\em some} rational points. 




\section{Arithmetic and geometry of hyperelliptic Jacobians}\label{sec:arithmeticJacobians}
First, we recall results of Bhargava and Gross on the average size of 2-Selmer groups of Jacobians of hyperelliptic curves.

\begin{theorem}[\protect{\cite[Theorem~1.1]{bhargavagross}}]
When all hyperelliptic curves of fixed genus $g \geq 1$ over $\QQ$ having a rational Weierstrass
point are ordered by height, the average size of the 2-Selmer groups of their Jacobians is at most 3.

Furthermore, the same result holds if one averages within a family defined by a finite set of congruence conditions.
\end{theorem}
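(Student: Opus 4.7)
The plan follows the arithmetic invariant theory strategy pioneered by Bhargava, in the specific form developed by Bhargava and Gross.

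\textbf{Step 1: Orbit parametrization.} The first step is to identify nontrivial elements of the 2-Selmer group $\Sel_2(J_C)$ (where $J_C = \Jac(C)$) with integral orbits of an appropriate group action. For a hyperelliptic curve $C \colon y^2 = f(x)$ of genus $g$ with $f$ monic of degree $2g+1$, one uses the representation of $\PGL_{2g+1}$ acting by conjugation on symmetric (self-adjoint) matrices, restricted to those with characteristic polynomial $f(x)$. More precisely, by Bhargava--Gross's Theorem on orbits, nontrivial 2-Selmer elements correspond bijectively to $\PSO_{2g+1}(\mathbb{Z})$-orbits of integral self-adjoint operators on a free $\mathbb{Z}$-module of rank $2g+1$ (equipped with a fixed split quadratic form) whose characteristic polynomial is $f$, subject to local solvability conditions at every place. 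The key point is that these orbits are encoded by integer matrices whose invariants (the coefficients of $f$) are precisely the height coordinates used to order the family.

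\textbf{Step 2: Counting orbits via geometry of numbers.} With the parametrization in hand, one counts integral orbits of height bounded by $X$ using a fundamental domain for the action of $\PSO_{2g+1}(\mathbb{Z})$ on the real points of the representation, and applies Bhargava's averaging technique to estimate the number of lattice points in a growing region. The geometry of numbers input gives that the number of such orbits (weighted appropriately) grows like $c \cdot X^{N_g}$ where $N_g$ is the total degree of the height, and the constant $c$ matches the volume of the fundamental domain. The main analytic obstacle is showing that no contributions come from the cusp of the fundamental domain; this is handled by cusp-cutoff estimates analogous to those in Bhargava's earlier work on 2-Selmer for elliptic curves.

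\textbf{Step 3: Local conditions and sieve.} Next, one imposes the $p$-adic conditions that cut out the 2-Selmer group. For each prime $p$, one computes the local mass of orbits over $\mathbb{Q}_p$ that arise from $\Sel_2$, and combines these using a uniformity (or square-free sieve) estimate to ensure that the local-to-global computation can be carried out with finitely many congruence conditions at a time. The uniformity estimate ensures that orbits with large ``bad'' discriminant at primes $p > M$ contribute negligibly as $M \to \infty$, permitting the interchange of limits. This same sieve machinery allows one to impose any additional finite set of congruence conditions on $f$, giving the ``Furthermore'' clause with no additional work.

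\textbf{Step 4: Assembling the bound.} The averaged local masses multiply to give an Euler product that one evaluates explicitly; combined with the volume computation from Step 2, it produces an average count of nontrivial Selmer orbits equal to $2$. Adding the identity element yields the stated average bound of at most $3$ for $\#\Sel_2(J_C)$. The hardest part is the cusp analysis in Step 2, which requires careful bookkeeping of which strata of the reducible locus can contribute to the main term; everything else is an organized computation within the framework already established in arithmetic invariant theory.
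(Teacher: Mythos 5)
This statement is not proved in the paper at all: it is quoted, with citation, from Bhargava and Gross, and is used downstream (through the corollaries on average rank) only as a black box to guarantee that at least $25\%$ of the curves in any congruence family have Jacobian of Mordell--Weil rank at most $1$. So there is no internal proof to compare yours against; the only meaningful question is whether your sketch faithfully reflects the argument of the cited work.

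At the level of architecture it does: Bhargava and Gross parametrize nontrivial $2$-Selmer classes of the Jacobian of $y^2=f(x)$ by integral orbits of the split odd special orthogonal group of a fixed $(2g+1)$-dimensional quadratic lattice acting by conjugation on self-adjoint operators with characteristic polynomial $f$, count those orbits by Bhargava's averaging and geometry-of-numbers method with a careful treatment of the cuspidal region, compute local masses, and conclude that the average number of nontrivial locally soluble classes is at most $2$, hence the average Selmer size is at most $3$; the same machinery applies to families cut out by finitely many congruence conditions. Two cautions, though. First, your opening identification of the group as $\PGL_{2g+1}$ acting by conjugation on symmetric matrices is wrong (conjugation does not preserve symmetry); the correct group, which you do name afterwards, is the split odd orthogonal group $\mathrm{SO}_{2g+1}\cong\mathrm{PSO}_{2g+1}$ preserving the fixed split form, and ``correspond bijectively'' overstates what is established --- the substantive point is that every locally soluble Selmer element admits an integral orbit representative, with multiplicities under control. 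Second, for the stated bound ``at most $3$'' the interchange-of-limits uniformity estimate you invoke in Step 3 is not actually needed: imposing only finitely many local conditions already yields an upper bound on the Selmer count, and the tail estimate is what would be required to upgrade this to an exact average. As written, your text is a roadmap of the Bhargava--Gross proof rather than a proof: the genuinely hard steps --- integral representatives, the cusp estimates, and the local mass computations --- are named but not carried out, which is precisely why the present paper imports the result as a citation rather than reproving it.
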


This gives immediate corollaries concerning the average rank of $\Jac(\QQ)$, where we write $\Jac$ for the Jacobian of a curve $C$.

\begin{coro}[\protect{\cite[Corollary~1.2]{bhargavagross}}]
When all hyperelliptic curves of fixed genus $g \geq 1$ over $\QQ$ having a rational Weierstrass
point are ordered by height, the average rank of the Mordell--Weil groups of their Jacobians is at most $\frac{3}{2}$.

Furthermore, the same result holds if one averages within a family defined by a finite set of congruence conditions.
\end{coro}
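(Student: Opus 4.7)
The plan is to derive the rank bound directly from the Selmer bound by comparing $2$-ranks. Let $C$ be a hyperelliptic curve of genus $g \geq 1$ over $\QQ$ with a rational Weierstrass point, and let $J = \Jac(C)$. The descent exact sequence
\[
0 \longrightarrow J(\QQ)/2J(\QQ) \longrightarrow \operatorname{Sel}_2(J) \longrightarrow \Sha(J)[2] \longrightarrow 0
\]
gives, writing $r = \rank J(\QQ)$ and letting $J(\QQ)[2]$ denote the $2$-torsion,
\[
\#\operatorname{Sel}_2(J) \;\geq\; \#\bigl(J(\QQ)/2J(\QQ)\bigr) \;=\; 2^{r}\cdot \#J(\QQ)[2] \;\geq\; 2^r.
\]

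Averaging this pointwise inequality over all curves of fixed genus $g$ ordered by height, the previous theorem gives
\[
\operatorname{avg}\bigl(2^{r}\bigr) \;\leq\; \operatorname{avg}\bigl(\#\operatorname{Sel}_2(J)\bigr) \;\leq\; 3.
\]
The elementary inequality $2^r \geq 2r$, which holds for every integer $r \geq 0$ (check $r=0,1,2$ by hand and induct for $r \geq 2$), then yields
\[
2\cdot \operatorname{avg}(r) \;\leq\; \operatorname{avg}(2^r) \;\leq\; 3,
\]
so $\operatorname{avg}(r) \leq 3/2$, as claimed.

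For the congruence-conditions refinement, I would simply observe that the above argument is pointwise: at every curve in the family the bound $2^r \leq \#\operatorname{Sel}_2(J)$ holds, and the only averaging input is the final clause of the Bhargava--Gross theorem, which is already stated to apply within any family defined by finitely many congruence conditions. There is no real obstacle here; the one thing to be careful about is not losing a factor by forgetting the $J(\QQ)[2]$ contribution in the descent sequence (which only helps), and choosing a sharp-enough elementary inequality between $r$ and $2^r$. The weaker bound $2^r \geq r+1$ would give only $\operatorname{avg}(r) \leq 2$, so using $2^r \geq 2r$ is what makes $3/2$ come out.
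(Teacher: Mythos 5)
Your argument is correct and is essentially the same deduction that Bhargava--Gross themselves give for their Corollary~1.2, which this paper simply cites without reproducing a proof: the pointwise chain $\#\operatorname{Sel}_2(J) \geq \#\bigl(J(\QQ)/2J(\QQ)\bigr) \geq 2^{r} \geq 2r$ (valid for all integers $r\geq 0$), followed by averaging against the Selmer bound of $3$. The congruence-family refinement also goes through exactly as you say, since the only averaged input is the final clause of the quoted Selmer theorem.
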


\begin{coro}\label{goodrankcor}
When all hyperelliptic curves of fixed genus $g \geq 1$ over $\QQ$ having a rational Weierstrass
point are ordered by height, at least $25\%$ have $\operatorname{rank} \Jac(\QQ) =0$ or $1$.  The same holds if one averages within a congruence family.

Furthermore, either a positive proportion have rank 0, or at least $50 \%$ have rank 1.
\end{coro}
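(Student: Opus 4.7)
The plan is to derive this purely from the average rank bound $\tfrac{3}{2}$ via an elementary linear-programming-style argument on the distribution of $\rank \Jac(\QQ)$ among the curves in $\mcF_g$, ordered by height.

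First I would fix notation: for each integer $k \geq 0$, let
\[
p_k \coloneqq \liminf_{X\to\infty} \frac{\#\{C\in\mcF_g : H(C)<X,\ \rank\Jac(\QQ)=k\}}{\#\{C\in\mcF_g : H(C)<X\}},
\]
and analogously $P_{\geq k}$ for the $\liminf$ proportion with rank at least $k$. The previous corollary tells us that the $\limsup$ of the average of $\rank\Jac(\QQ)$ over $\mcF_g$ (ordered by height, or within any congruence family) is at most $\tfrac{3}{2}$. Since the rank is a non-negative integer, the average dominates $1\cdot P_{\geq 1} + 1\cdot P_{\geq 2} + \cdots$, and in particular $P_{\geq 2}$ is bounded above by any quantity whose average exceeds $\tfrac{3}{2}$ under that lower bound.

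For the first assertion, I would argue by contradiction: suppose $p_0 + p_1 < \tfrac{1}{4}$, so that a proportion exceeding $\tfrac{3}{4}$ of curves have rank at least $2$. Then along an appropriate subsequence in $X$ the average rank is at least $2\cdot\tfrac{3}{4} = \tfrac{3}{2}$, and strict inequality on the density translates to strict inequality on the average, contradicting the Bhargava--Gross bound. Hence $p_0+p_1\geq \tfrac{1}{4}$, giving the $25\%$ claim (and the same argument, applied within a congruence family, handles that case verbatim since the average-rank bound persists there).

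For the dichotomy, suppose neither alternative holds: a density-$0$ set has rank $0$, and strictly fewer than half have rank $1$, i.e.\ $p_0 = 0$ and $p_1 < \tfrac{1}{2}$. Then the proportion with rank $\geq 2$ satisfies $P_{\geq 2} > \tfrac{1}{2}$, and the same liminf/limsup accounting gives an average rank at least
\[
1\cdot p_1 + 2\cdot P_{\geq 2} \;\geq\; p_1 + 2(1-p_1) \;=\; 2 - p_1 \;>\; \tfrac{3}{2},
\]
again contradicting the average-rank bound. The only nontrivial point in writing this up carefully is to justify passing between the $\liminf$ definitions of the $p_k$ and the $\limsup$ average-rank hypothesis, which amounts to choosing a subsequence of heights realizing the relevant extrema and using non-negativity of rank to truncate tails; this is the one step that requires any care, but it is standard and introduces no real obstacle.
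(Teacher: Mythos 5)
Your first assertion is proved correctly, and by the intended route: the paper gives no written argument for this corollary, treating it as immediate from the Bhargava--Gross average bound (equivalently, one can apply Markov's inequality to the average $2$-Selmer size of $3$, using $2^{\operatorname{rank}\Jac(\QQ)}\le \#\mathrm{Sel}_2$), and your Markov-style count delivers exactly the $25\%$ statement, which is also the only part of the corollary the paper later invokes. The congruence-family case is indeed verbatim, since the average bound is stated for such families.

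The second assertion, however, has a genuine gap as you have set it up. You define each $p_k$ as a $\liminf$, negate the dichotomy as ``$p_0=0$ and $p_1<\tfrac12$'', and then conclude $P_{\ge 2}>\tfrac12$. But the $\liminf$s of the proportions $q_0(X),q_1(X),q_{\ge 2}(X)$ are in general attained along \emph{different} subsequences of heights, and the inequality $P_{\ge 2}\ge 1-p_0-p_1$ is false for $\liminf$s: you would need $\limsup_X q_0(X)=0$, not $\liminf_X q_0(X)=0$. Indeed, the implication you are attempting is not a formal consequence of ``$\limsup$ of the average rank $\le\tfrac32$'': if the cumulative proportions oscillate between a regime with $(q_0,q_1,q_{\ge2})\approx(0,\tfrac12,\tfrac12)$ (average $\tfrac32$) and a regime with $(q_0,q_1,q_{\ge2})\approx(\tfrac14,\tfrac14,\tfrac12)$ (average $\tfrac54$), then the $\limsup$ of the average is $\tfrac32$, yet $\liminf q_0=0$ and $\liminf q_1=\tfrac14<\tfrac12$. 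So under the paper's stated convention that ``positive proportion'' means $\liminf>0$, your contradiction hypothesis is consistent with the average-rank bound and the proof cannot close.

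The repair is to read ``a positive proportion have rank $0$'' as ``the rank-$0$ curves do not have density zero,'' so that its negation gives $q_0(X)\to 0$ along the \emph{full} sequence of heights. Then choose a subsequence realizing $\liminf q_1<\tfrac12-\epsilon$; along it, eventually $q_0<\epsilon/4$, and the average rank is at least $q_1+2\bigl(1-q_0-q_1\bigr)=2-q_1-2q_0>\tfrac32$, contradicting the Bhargava--Gross bound. This single-subsequence bookkeeping is precisely the step you waved off as ``standard and introducing no real obstacle''; it is where the content of the second assertion lies, and as written your argument for it would fail.
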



Next, we recall a deep theorem of Faltings about rational points on subvarieties of abelian varieties.

\begin{theorem}[\protect{\cite[p.~175]{faltings94}}]\label{faltingsabelian}
Let $X$ be a closed subvariety of an abelian variety $A$, with both defined over a number field $K$.  Then the set $X(K)$ equals a finite union $\cup B_i(K)$, where each $B_i$ is a translated abelian subvariety of $A$ contained in $X$.
\end{theorem}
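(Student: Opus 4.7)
The plan is to follow the Vojta--Faltings Diophantine-approximation strategy, which generalizes Vojta's proof of the Mordell conjecture. After extending scalars to a number field over which all the expected structure is defined, the first step is a geometric reduction using Ueno's structure theorem: if $G_X \subseteq A$ denotes the connected component of the stabilizer $\{a \in A : a + X = X\}$, then $X$ is fibered by translates of $G_X$, and by descending to the quotient $A/G_X$ one reduces to the case when the stabilizer of $X$ is finite. In that case one needs to show that $X(K)$ is not Zariski-dense; an induction on $\dim X$ then produces the finite union $\bigcup B_i$ in the statement.

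Assume now that $X \subseteq A$ has finite stabilizer and, for contradiction, that $X(K)$ is Zariski-dense. The goal is to prove a Vojta-type height inequality: for every $\epsilon > 0$ there is a constant $c = c(X,A,K,\epsilon)$ such that any two points $P, Q \in X(K)$ which are sufficiently large and sufficiently aligned with respect to the N\'eron--Tate pairing $\langle\cdot,\cdot\rangle$ on $A(K)_\RR$ satisfy
\[
\langle P, Q \rangle \;\leq\; \epsilon\, \hat h(P)^{1/2}\hat h(Q)^{1/2} \;+\; c.
\]
Since $A(K)$ is finitely generated (Mordell--Weil), any infinite sequence of points in $X(K)$ contains pairs whose N\'eron--Tate angle is arbitrarily small, which contradicts the inequality once $\epsilon$ is chosen small enough in terms of the rank. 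To establish the height inequality, I would work on a power $X^n \subset A^n$ with $n$ chosen in terms of $\dim X$ and $\epsilon$, and construct an auxiliary global section $s$ of a suitable line bundle $\bigotimes_i p_i^* L^{\otimes d_i}$ on $A^n$ (with $L$ symmetric ample) whose restriction to $X^n$ is nonzero, vanishes to high order along the diagonal-type subvarieties imposed by the alignment hypothesis, and has controlled Arakelov height. This is an arithmetic Siegel's lemma computation carried out with Gillet--Soul\'e arithmetic intersection theory.

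The main obstacle, and the heart of Faltings' contribution, is then the product theorem: having produced $s$, one must control the index of vanishing of $s$ at a rational point $(P_1, \ldots, P_n) \in X^n(K)$ with prescribed ratios of heights and show that, if the index is too large, then $s$ actually vanishes on a product subvariety $\prod Y_i \subseteq X^n$ with each $Y_i \subsetneq X$. Iterating the construction against this geometric constraint, and combining with the height bound on $s$, yields the desired Vojta inequality and hence the contradiction. The only nontrivial input beyond this is functoriality under isogeny, needed in the reduction step so that the theorem descends from $A$ to $A/G_X$; this I would handle by noting that translated abelian subvarieties pull back to translated abelian subvarieties under isogenies, so the union $\bigcup B_i(K)$ transports cleanly.
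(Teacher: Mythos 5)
This statement is not proved in the paper at all: it is imported verbatim as a known theorem of Faltings (the resolution of Lang's conjecture for subvarieties of abelian varieties, cited to \cite{faltings94}), and is used downstream, e.g.\ in Proposition \ref{prop:finitenessdegreedpoints}, purely as a black box. So there is no in-paper argument to compare yours against; what you have written is a summary of Faltings' own proof strategy (building on Vojta), and as a roadmap it is accurate: reduce by the stabilizer $G_X$ and the quotient $A/G_X$ to the case of finite stabilizer, show non-Zariski-density of $X(K)$ there, and induct on dimension to extract the translates $B_i$; the non-density is obtained from a Vojta-type height inequality proved by constructing a small-height section of $\bigotimes_i p_i^* L^{\otimes d_i}$ on a power $X^n$ via arithmetic (Gillet--Soul\'e) Riemann--Roch/Siegel-lemma arguments, with Faltings' product theorem controlling the index at a rational point.

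Two cautions if you intend this as more than a pointer to the literature. First, the inequality you state, $\langle P,Q\rangle \leq \epsilon\,\hat h(P)^{1/2}\hat h(Q)^{1/2} + c$ for two aligned points, is essentially a Mumford-type gap statement and is not by itself what Faltings proves or what the argument needs; the actual input is the $n$-point Vojta inequality for points $P_1,\dots,P_n$ lying in a small cone with rapidly increasing height ratios $\hat h(P_{i+1})/\hat h(P_i)$, and the contradiction with Zariski-density requires choosing such a sequence generically in $X$ (so that no $P_i$ lies in a proper subvariety forced by the auxiliary section) --- the pigeonhole on angles alone does not suffice. Second, the induction producing the finite union $\cup B_i(K)$ needs the structure of the ``special locus'' (the union of translated positive-dimensional abelian subvarieties contained in $X$), namely that it is Zariski closed and proper when the stabilizer is finite; this is where the Ueno/Kawamata structure theory genuinely enters, and your sketch invokes it only implicitly. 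As a blind reconstruction of one of the deepest results quoted in the paper, your outline is faithful, but it is a pr\'ecis of \cite{faltings94} rather than a verifiable proof; for the purposes of this paper, citing Faltings, as the authors do, is the appropriate treatment.
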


To conclude this section, we prove that asymptotically, $100\%$ of hyperelliptic curves with a rational Weierstrass point over $\mQ$ have finitely many \emph{unexpected} degree $d$ points, as described in Theorem \ref{thm:degreedbound}. 

\begin{prop}\label{simple}
Fix $g \geq 2$. Asymptotically, 100\% of genus $g$ hyperelliptic curves over $\QQ$ with a rational Weierstrass point have geometrically simple Jacobian.
\end{prop}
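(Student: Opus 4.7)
The plan is to reduce the proposition to a Galois-theoretic statement about integer polynomials and then invoke quantitative Hilbert irreducibility. The reduction uses the following theorem of Zarhin: for any separable $f \in \QQ[x]$ of degree $n \geq 5$ whose Galois group over $\QQ$ contains $A_n$, the Jacobian $\Jac$ of $y^2 = f(x)$ satisfies $\operatorname{End}_{\overline{\QQ}}(\Jac) = \mZ$, and in particular is geometrically simple. Since the hyperelliptic polynomials in our family have degree $2g+1 \geq 5$ for every $g \geq 2$, it suffices to show that asymptotically $100\%$ of polynomials
\[
f(x) = x^{2g+1} + a_2 x^{2g-1} + \cdots + a_{2g+1} \in \mZ[x],
\]
ordered by the weighted height $H(C) = \max|a_i|^{1/i}$, have Galois group $S_{2g+1}$ over $\QQ$.

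This density-$1$ Galois statement is a standard consequence of effective Hilbert irreducibility. One counts monic integer polynomials of degree $2g+1$ lying in the weighted box $|a_i| \leq X^i$ whose splitting field has Galois group a proper subgroup of $S_{2g+1}$, and compares to the total count of order $X^{2+3+\cdots+(2g+1)}$. The classical estimates of van der Waerden, refined by Gallagher, Dietmann, and others, provide a power saving on the exceptional count, so the proportion of polynomials with full Galois group tends to $1$ as $X \to \infty$. Exactly as in \cite[Theorem 1.1]{bhargavagross}, the same count can also be imposed within any finite set of congruence conditions on the $a_i$.

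Combining Zarhin's theorem with this density-$1$ Galois count yields the proposition. The main obstacle, modest as it is, is verifying that the standard exceptional counts for non-maximal Galois groups carry over to the weighted height ordering $H(C) = \max|a_i|^{1/i}$ rather than the uniform height $\max|a_i|$; since the known estimates exhibit a strict power saving, they transfer directly to the weighted setting by slicing the box $|a_i| \leq X^i$ into dyadic sub-boxes and summing.
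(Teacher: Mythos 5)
Your overall strategy coincides with the paper's: reduce to a Galois-theoretic statement via Zarhin's theorem and then apply quantitative Hilbert irreducibility in the weighted height ordering, with the congruence-condition compatibility noted at the end. However, there is a concrete gap in the middle step. The family here is \emph{not} the full space of monic degree $2g+1$ polynomials: the $x^{2g}$-coefficient is identically zero. The classical counts of van der Waerden and Gallagher (and their refinements) bound the exceptional set inside the full coefficient box, and a density-zero exceptional set in the full box could a priori concentrate on the codimension-one slice $a_1=0$; so those estimates do not transfer ``directly,'' dyadic slicing or not. What is actually needed is a Hilbert irreducibility statement for the constrained family $F(x,t_2,\dots,t_{2g+1})=x^{2g+1}+t_2x^{2g-1}+\cdots+t_{2g+1}$, and for that one must first verify that the \emph{generic} Galois group of this family over $\QQ(t_2,\dots,t_{2g+1})$ is $S_{2g+1}$ --- Hilbert irreducibility only says that almost all specializations realize the generic group, whatever it is. The paper supplies exactly this: the specialization $t_2=\cdots=t_{2g-1}=0$, $t_{2g}=t_{2g+1}=-1$ gives $x^{2g+1}-x-1$, which has Galois group $S_{2g+1}$ by Osada/Nart--Vila, and then one quotes Cohen's quantitative Hilbert irreducibility theorem (adapted to the unequal weighting of coordinates) rather than the polynomial-counting results for cubes. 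Your proposal asserts the density-one Galois statement as ``standard'' without this verification, which is the one genuinely nontrivial input.

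A second, smaller omission: the proposition orders \emph{curves} by height, i.e., one counts minimal equations (no prime $p$ with $p^{2i}\mid a_i$ for all $i$), not all integer points in the weighted box. Since minimal equations form a positive proportion of all integer points (a short sieve), a density-zero exceptional set among all equations remains density zero among minimal ones; the paper records this, and you should too to match the statement as ordered by $H(C)$.
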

\begin{proof}
Let $t_2,\dots,t_{2g+1}$ be indeterminates.  The polynomial $$F(x,t_2,\dots, t_{2g+1})=x^{2g+1} + t_2x^{2g-1} + \dots + t_{2g+1}$$ has Galois group $S_{2g+1}$ over the field $\QQ(t_2,\dots,t_{2g+1})$.  One way to see this is to note that its specialization at $t_2=\dots=t_{2g-1}=0$ and $t_{2g}=t_{2g+1}=-1$ gives $x^{2g+1}-x-1$, which has Galois group $S_{2g+1}$, by \cite[Corollary 3]{osada} or \cite{nartvila}.

By a criterion of Zarhin \cite[Theorem 1]{zarhin}, the Jacobian of the hyperelliptic curve given by $y^2=f(x)$ is geometrically simple whenever $f(x)$ has Galois group $S_{\deg f}$. 

Let $\cE=\cE_g$ be the complement in $\mA^{2g}$ of the discriminant locus for equations of the form $y^2=x^{2g+1}+a_{2}x^{2g-1}+ \dots + a_{2g+1}$.  We apply a version of Hilbert irreducibility (our height weights coordinates unequally, so some care must be taken); see \cite[Theorem 2.1]{cohen}, adapted as in \cite[Section 5, Notes (iii)]{cohen}.  It implies that asymptotically 100\% of all the integer points in $\cE$, when ordered by height, give hyperelliptic curves whose Jacobians are geometrically simple.  A sieving argument shows that a positive proportion of the integer points of $\cE$ give minimal equations, so asymptotically 100\% of minimal equations give curves with geometrically simple Jacobians.
\end{proof}

For a curve $C$ and a positive integer $d$, let $C^{(d)}$ denote its $d$-th symmetric product, the points of which correspond to effective degree $d$ divisors on $C$.  Note that a conjugate $d$-tuple of points on $C$ gives a rational point of the symmetric product.

\begin{prop}\label{prop:finitenessdegreedpoints}
Let $d$ be a positive integer, and let $g>d$ be an integer.
\begin{enumerate}
\item If $d$ is odd, then asymptotically, 100\% of genus $g$ hyperelliptic curves over $\QQ$ with a rational Weierstrass point have finitely many degree $d$ points.
\item If $d$ is even, then asymptotically, 100\% of genus $g$ hyperelliptic curves over $\QQ$ with a rational Weierstrass point have finitely many degree $d$ points not obtained by pulling back degree $\frac{d}{2}$ points of $\Pone$.
\end{enumerate}
\end{prop}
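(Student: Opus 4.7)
The plan is to combine the geometric simplicity of generic Jacobians (Proposition~\ref{simple}) with Faltings's theorem on subvarieties of abelian varieties (Theorem~\ref{faltingsabelian}). First, using Proposition~\ref{simple}, I would restrict to the asymptotic $100\%$ of curves $C$ in the family whose Jacobian $\Jac$ is geometrically simple. For such a curve, the marked Weierstrass point $\infty$ gives a basepoint for the Abel--Jacobi map $\phi_d\colon C^{(d)}\to\Jac$, whose image $W_d\coloneqq\phi_d(C^{(d)})$ is a closed subvariety of $\Jac$ of dimension at most $d<g$.

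Next, apply Theorem~\ref{faltingsabelian} to $W_d\subset\Jac$ to write $W_d(\QQ)=\bigcup_i B_i(\QQ)$, with each $B_i$ a translate of an abelian subvariety of $\Jac$ contained in $W_d$. Geometric simplicity forces every abelian subvariety of $\Jac_{\overline{\QQ}}$ to be either trivial or all of $\Jac_{\overline{\QQ}}$, and the latter cannot sit inside $W_d$ for dimension reasons; hence each $B_i$ is a single point, and $W_d(\QQ)$ is finite.

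The remaining task is to transfer this finiteness to degree $d$ closed points of $C$ itself. Such a point is represented by a single size-$d$ Galois orbit $D=\{P_1,\dots,P_d\}$, viewed as a rational effective divisor, i.e.\ a rational point of $C^{(d)}$. The fiber of $\phi_d$ over $[D]$ is the complete linear system $|D|$, of dimension $h^0(D)-1$, and the classical description of linear series on a hyperelliptic curve says that, for effective $D$ with $\deg D\leq g$, one has $h^0(D)\geq 2$ if and only if $D$ contains some fiber $\pi^*(p)$ of the hyperelliptic map $\pi\colon C\to\Pone$.

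The main obstacle, and the step that does the essential work, is the following orbit dichotomy: since the hyperelliptic involution $\iota$ is defined over $\QQ$ it commutes with Galois, so $D$ is $\iota$-stable and decomposes into $\iota$-pairs and Weierstrass ($\iota$-fixed) points; Galois-transitivity together with the $\QQ$-rationality of the Weierstrass locus then forces $D$ to be either entirely Weierstrass (so no $\iota$-pair occurs and $h^0(D)=1$) or entirely non-Weierstrass (so $d$ must be even, $D$ decomposes as a disjoint union of $d/2$ hyperelliptic pairs, and pushing forward by $\pi$ yields $D=\pi^*(q)$ for a degree $d/2$ closed point $q\in\Pone$, so $D$ is \emph{expected}). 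Consequently, for $d$ odd every $D$, and for $d$ even every unexpected $D$, satisfies $h^0(D)=1$; thus $\phi_d$ is injective on the rational points we wish to count, and these inject into the finite set $W_d(\QQ)$.
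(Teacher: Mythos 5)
Your overall architecture is the same as the paper's: restrict via Proposition \ref{simple} to the $100\%$ of curves with geometrically simple Jacobian, apply Theorem \ref{faltingsabelian} to the image $W_d$ of the Abel--Jacobi map to get $W_d(\QQ)$ finite, and then use the structure of special linear series on a hyperelliptic curve to show the Galois-orbit divisors of interest are not collapsed by the Abel--Jacobi map. The first two steps, and the fact that an effective divisor $D$ of degree $d<g$ with $h^0(D)\geq 2$ must contain a hyperelliptic fiber, are correct and match the paper.

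The genuine gap is in your ``orbit dichotomy.'' That $\iota$ is defined over $\QQ$ only says $\iota$ carries Galois orbits to Galois orbits; it does \emph{not} make the particular orbit $D$ stable under $\iota$, since $\iota(P_1)$ need not be a Galois conjugate of $P_1$. Consequently the dichotomy you deduce --- every size-$d$ orbit is either entirely Weierstrass or a union of $d/2$ hyperelliptic pairs, hence expected --- is false, and would in fact contradict the phenomenon this paper studies: the unexpected quadratic orbits such as $\{(i,i),(-i,-i)\}$ on $y^2=x^9+x^3-1$, or Siksek's nine pairs, consist entirely of non-Weierstrass points yet are not of the form $\pi^*(q)$ (only the ``entirely Weierstrass or entirely non-Weierstrass'' half of your dichotomy is true, since the Weierstrass locus is Galois-stable). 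What you actually need is the weaker statement, only for orbits with $h^0(D)\geq 2$: such a multiplicity-free orbit must contain a pair $P+\iota(P)$ with $P$ non-Weierstrass (the fiber over a branch point is $2W$, which a reduced orbit cannot contain), so $\pi$ is non-injective on $D$; since Galois acts transitively on $D$ and on $\pi(D)$, all fibers of $D\to\pi(D)$ have the same size, namely $2$, forcing $d$ even and $D=\pi^*(q)$ for a degree $\frac{d}{2}$ closed point $q$ of $\Pone$, i.e.\ $D$ expected. This is exactly how the paper closes the argument (using \cite[p.~13]{acgh} for the existence of the subdivisor $P+\iota(P)$); with that replacement your proof goes through.
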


\begin{proof}
First, note that since the map from a hyperelliptic curve $C$ to $\Pone$ has degree two, the image of a $d$-tuple of conjugate points on $C$ is either a $d$-tuple of conjugate points on $\Pone$, or possibly a $\frac{d}{2}$-tuple of conjugate points on $\Pone$ if $d$ is even.

We may assume $C$ has geometrically simple Jacobian $J$, by Proposition \ref{simple}.  Map the symmetric product $C^{(d)}$ to $J$ via the Abel--Jacobi map given by the rational Weierstrass point, i.e.~$P_1 + \cdots + P_d \mapsto [P_1 + \cdots + P_d - d\cdot \infty].$

Since $d<g$, the image $W_d$ is a proper closed subvariety of $J$.  Since $J$ is geometrically simple, $W_d$ contains no translate of a positive-dimensional abelian subvariety of $J$.  By Theorem \ref{faltingsabelian}, $W_d(\QQ)$ is finite.

Lastly, we can ignore the positive-dimensional fibers of $C^{(d)} \rightarrow J$, which correspond to effective degree $d$ divisors $D$ on $C$ such that $D$ has positive rank.  On a hyperelliptic curve, any such divisor $D$ must contain a subdivisor of the form $P + \iota(P)$, where $P$ is some point of $C$ and $\iota$ is the hyperelliptic involution, which switches points within the same fiber \cite[p.~13]{acgh}.  

But by the first paragraph, if $d$ is odd, no $d$-tuple of conjugate points can make up such a $D$. If $d$ is even, it is only possible if $$D=P_1+\iota(P_1) + \dots + P_{\frac{d}{2}} + \iota(P_{\frac{d}{2}}),$$ which will map to a $\frac{d}{2}$-tuple of conjugate points on $\mP^1$. Thus in either case, the set we wish to show is finite injects into the finite set $W_d(\QQ)$.
\end{proof}

\section{$p$-adic preliminaries}\label{sec:padicprem}
We recall some results on $p$-adic integration and the Chabauty--Coleman method; we refer the reader to \cite{mccallum2007method, siksek2009chabauty, park16} for a fuller account of these techniques. We also prove some auxiliary lemmas.

\subsection{Vanishing of integrals}\label{subsec:vanishingintegrals}
Fix $C/\QQ$ a curve of genus $g \geq 2$, and $p$ a prime number.  We make use of $p$-adic integration on the Jacobian variety $J$ of our curve.  Let $\Cp$ be the completion of the algebraic closure of $\Qp$.  After we base change from $\QQ$ to $\Qp$, we have an integration pairing
\begin{align*}
H^0(C_{\Qp},\Omega^1) \times J(\Cp) \rightarrow \Cp\\
(\omega, D) \mapsto \int_0^D\omega
\end{align*}
that is $\Qp$-linear in the left factor, and a group homomorphism in the right.  The kernel on the left is trivial, and on the right is the torsion subgroup $J(\Cp)_{\tors}$.

Let $r$ be the rank of $J(\QQ)$ as a finitely generated abelian group (for the rest of the paper, $r$ will denote this rank for whatever curve is under consideration).  We identify $J(\QQ)$ with its image in $J(\Qp)$ and $J(\Cp)$.  Within the former, its $p$-adic closure $\overline{J(\QQ)} \subset J(\Qp)$ will be a finitely generated $\mZ_p$-module of rank at most $r$.  Define $$\Lambda_C \coloneqq\brk{\omega \in H^0(C_{\Qp},\Omega^1) \ \left| \ \int_{0}^D\omega = 0 \text{ for all } D \in J(\QQ)\right.}.$$ This is a $\Qp$-vector space of dimension at least $g-r$.

Suppose further that $p$ is a prime of good reduction for our curve $C$.  For a point $P \in C(\Cp)$, let $\Pbar \in C_{\Fp}(\Fpbar)$ denote its reduction at $p$. Then given a nonzero form $\omega \in H^0(C_{\Qp},\Omega^1)$, we can scale it by an element of $\Qp^\times$ to give a {\em normalized} form, which we take to mean it reduces to a nonzero element $\overline{\omega} \in H^0(C_{\Fp}, \Omega^1)$.  For a normalized form $\omega$, and a point $\Pbar \in C_{\Fp}(\Fpbar)$, we define $n(\omega, \Pbar)$ to be the order of vanishing of $\overline{\omega}$ at $\overline{P}$.  As long as $\Lambda_C \neq \{0\}$, we then define $$n(\Lambda_C, \Pbar) = \min_{\text{normalized }\omega \in \Lambda_C} n(\omega, \Pbar).$$ By \cite[Theorem 6.4]{stoll2006independence}, the lower the rank is, the better we can control these minimal orders of vanishing.
\begin{theorem}[Stoll]\label{stollbound}
Let $C/\QQ$ be a curve of genus $g \geq 2$, with rank $r \leq g-1$, and let $p$ be a prime of good reduction. Then $\sum_{\Pbar \in C_{\Fp}(\Fpbar)} n(\Lambda_C,\Pbar) \leq 2r.$
\end{theorem}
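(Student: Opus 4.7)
The plan is to translate the statement into a question about linear series on $C_{\Fp}$ and then apply Clifford's theorem. The key geometric reformulation is that the sum on the left equals the degree of the base divisor of a natural subspace $V \subset H^0(C_{\Fp},\Omega^1)$ of dimension at least $g-r$, obtained by reducing $\Lambda_C$ modulo $p$.

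First, I would set up the mod-$p$ reduction carefully. Since $p$ is a prime of good reduction, $C$ extends to a smooth proper model $\cC/\mathbb{Z}_p$, and $H^0(\cC,\Omega^1_{\cC/\mathbb{Z}_p})$ is a free $\mathbb{Z}_p$-module of rank $g$ with generic fiber $H^0(C_{\Qp},\Omega^1)$. Let $L_0 \coloneqq \Lambda_C \cap H^0(\cC,\Omega^1_{\cC/\mathbb{Z}_p})$, and let $L^{\mathrm{sat}}$ denote its saturation in $H^0(\cC,\Omega^1_{\cC/\mathbb{Z}_p})$; this is a direct summand of the ambient lattice. Then $V \coloneqq L^{\mathrm{sat}} \otimes_{\mathbb{Z}_p} \Fp$ injects into $H^0(C_{\Fp},\Omega^1)$ as a subspace of dimension $\dim_{\Qp} \Lambda_C \geq g-r$. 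By construction, the nonzero reductions of normalized forms in $\Lambda_C$ are exactly the nonzero elements of $V$, so
\[
n(\Lambda_C,\bar P) \;=\; \min_{0 \neq \omega \in V} \ord_{\bar P}(\omega) \qquad \text{for every } \bar P \in C_{\Fp}(\Fpbar).
\]

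Now let $B$ denote the base divisor of $V$, i.e., the largest effective divisor such that $V \subseteq H^0(C_{\Fp},\Omega^1(-B))$. By the very definition of the base divisor, $\ord_{\bar P}(B) = \min_{0 \neq \omega \in V} \ord_{\bar P}(\omega) = n(\Lambda_C,\bar P)$, whence $\sum_{\bar P} n(\Lambda_C,\bar P) = \deg B$ (the sum is finite because $B$ has finite support). The line bundle $\Omega^1_{C_{\Fp}}(-B)$ is effective (since $h^0 \geq \dim V \geq 1$) and special (since $h^1(\Omega^1(-B)) = h^0(\mcO(B)) \geq 1$ by Serre duality, $B$ being effective), so Clifford's theorem applies and gives
\[
\dim V \;\leq\; h^0\!\left(\Omega^1(-B)\right) \;\leq\; 1 + \tfrac{1}{2}(2g-2-\deg B) \;=\; g - \tfrac{1}{2}\deg B.
\]
Rearranging and using $\dim V \geq g-r$ yields $\deg B \leq 2(g-\dim V) \leq 2r$, which is the desired bound.

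The main technical point is the first step: one must ensure that passing to the saturation preserves dimension and that the minimum-vanishing statement transfers faithfully from normalized forms of $\Lambda_C$ to nonzero elements of the reduction $V$. Once this dictionary is in place, the rest of the argument is a clean application of Clifford's theorem to the subcanonical line bundle $\Omega^1(-B)$.
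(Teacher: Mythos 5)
Your argument is correct: the identification of $\sum_{\Pbar} n(\Lambda_C,\Pbar)$ with the degree of the base divisor of the mod-$p$ reduction of the (saturated) lattice $\Lambda_C \cap H^0(\cC,\Omega^1_{\cC/\mathbb{Z}_p})$, followed by Serre duality and Clifford's theorem applied to $\Omega^1(-B)$, gives exactly $\deg B \leq 2(g-\dim V) \leq 2r$. Note that the paper offers no proof of this statement — it is quoted from Stoll's work (\cite[Theorem 6.4]{stoll2006independence}) — and your reduction-plus-Clifford argument is essentially Stoll's original proof, so there is no genuine divergence to report; the only cosmetic remark is that the lattice $\Lambda_C \cap H^0(\cC,\Omega^1_{\cC/\mathbb{Z}_p})$ is already saturated, so the passage to $L^{\mathrm{sat}}$ is automatic.
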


For our purposes, we need forms that achieve these minima at different points simultaneously.

\begin{lemma}\label{universalform}
Let $C/\QQ$ be a curve of genus $g \geq 2$, and let $p$ be a prime of good reduction. Let $\overline{P_1}, \dots, \overline{P_d} \in C_{\Fp}(\Fpbar)$. Suppose $r \leq g-1$ and $p \geq d$.  Then there exists a normalized $\omega \in \Lambda_C$ such that $n(\Lambda_C, \overline{P_i}) = n(\omega, \overline{P_i})$ for $i=1, \dots, d$.
\end{lemma}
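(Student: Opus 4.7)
The plan is to reformulate the lemma as a covering problem for vector spaces over $\Fp$. Let $\Lambda_C^\circ \subset \Lambda_C$ denote the $\mZ_p$-lattice of forms with $p$-integral coefficients, with respect to a fixed integral basis of $H^0(C_{\mZ_p},\Omega^1)$, and let $\overline{\Lambda_C^\circ}$ be its reduction modulo $p$, an $\Fp$-subspace of $H^0(C_{\Fp},\Omega^1)$. A form $\omega \in \Lambda_C$ is normalized precisely when, after rescaling by an element of $\Qp^\times$, it lies in $\Lambda_C^\circ$ with nonzero reduction $\overline{\omega} \in \overline{\Lambda_C^\circ}$; for such $\omega$ the quantity $n(\omega,\overline{P_i})$ coincides by construction with $\ord_{\overline{P_i}}(\overline{\omega})$.

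For each $i = 1,\ldots,d$ set $W_i \coloneqq \brk{\overline{\omega} \in \overline{\Lambda_C^\circ} \mid \ord_{\overline{P_i}}(\overline{\omega}) > n(\Lambda_C,\overline{P_i})}$. This is an $\Fp$-linear subspace of $\overline{\Lambda_C^\circ}$, and it is \emph{proper} because, by definition of $n(\Lambda_C,\overline{P_i})$ as a minimum, there exists some normalized form in $\Lambda_C$ whose reduction realizes that order of vanishing exactly. Producing an $\omega$ as in the lemma statement is therefore equivalent to producing an element of $\overline{\Lambda_C^\circ} \setminus \bigcup_{i=1}^d W_i$: any $\mZ_p$-lift of such an $\overline{\omega}$ to $\Lambda_C^\circ$ is automatically normalized, and simultaneously attains the minimum order of vanishing at every $\overline{P_i}$.

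To produce such an $\overline{\omega}$ I would invoke the classical covering lemma over a finite field: a vector space over $\Fp$ of dimension at least $2$ cannot be written as a union of fewer than $p+1$ proper subspaces. The hypothesis $r \leq g-1$ forces $\dim_{\Qp}\Lambda_C \geq g - r \geq 1$, and hence $\dim_{\Fp} \overline{\Lambda_C^\circ} \geq 1$; the one-dimensional case is trivial (each $W_i$ must equal $\brk{0}$, so any nonzero element works), and when the dimension is at least $2$ the covering lemma applied to the $d \leq p$ proper subspaces $W_1,\ldots,W_d$ delivers the desired $\overline{\omega}$. The only delicate point in a full write-up is matching the hypothesis $p \geq d$ against the sharp form of the covering lemma — it is the bound $p+1$, not $p$, that makes $p \geq d$ just barely sufficient — but the underlying argument is pure linear algebra over $\Fp$.
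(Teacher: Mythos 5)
Your argument is correct, but it takes a genuinely different route from the paper. The paper proves the lemma by induction on $d$: given $\omega'$ attaining the minimal vanishing orders at $\overline{P_1},\dots,\overline{P_{d-1}}$ and $\omega''$ attaining it at $\overline{P_d}$, it forms the pencil $\omega'+\tilde{\alpha}\omega''$ and uses $p\geq d$ to pick a residue $\alpha\in\Fp^\times$ avoiding the at most $d-2$ leading-coefficient cancellations at the intermediate points, so the avoidance happens inside a two-dimensional space at each inductive step. You instead reduce the whole lattice $\Lambda_C^\circ=\Lambda_C\cap H^0(\mathcal{C}_{\mZ_p},\Omega^1)$ modulo $p$ in one stroke, observe that the failure locus at each $\overline{P_i}$, namely $W_i=\brk{\overline{\omega}\in\overline{\Lambda_C^\circ} \mid \ord_{\overline{P_i}}(\overline{\omega})>n(\Lambda_C,\overline{P_i})}$, is a proper $\Fp$-subspace (proper precisely because some normalized form attains the minimum), and invoke the covering lemma that no $\Fp$-vector space is a union of $d\leq p$ proper subspaces; any lift of a point outside $\bigcup_i W_i$ is the desired form. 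The two proofs use the hypothesis $p\geq d$ for the same counting reason, but yours dispenses with the induction and makes the role of that hypothesis transparent (it is exactly the threshold $p+1$ in the covering lemma), at the modest cost of having to set up the reduction of the lattice carefully: one should note that any nonzero $\omega\in\Lambda_C$ can be scaled into $\Lambda_C^\circ$ with nonzero reduction (so $\overline{\Lambda_C^\circ}\neq 0$ and each $W_i$ is proper), that $0\in W_i$ under the convention $\ord_{\overline{P_i}}(0)=\infty$ so the element produced is automatically nonzero, and that for a lift $\omega$ of such an element one has $n(\omega,\overline{P_i})\geq n(\Lambda_C,\overline{P_i})$ by definition of the minimum, whence equality. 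These are exactly the points you flag, and they go through; the paper's inductive construction, by contrast, is more hands-on and is the template reused later in Lemma \ref{bestuniversalbasis}, which is presumably why the authors phrase it that way.
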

\begin{proof}
We proceed by induction on $d$. The base case $d=1$ is immediate from the definition of $n(\Lambda_C, \overline{P_1}).$ Suppose there is a normalized form $\omega'$ such that $n(\Lambda_C, \overline{P_i}) = n(\omega', \overline{P_i})$ for $i=1, \dots, d-1.$  If $n(\Lambda_C, \overline{P_d}) = n(\omega', \overline{P_d})$, we may take $\omega=\omega'$.  Otherwise, choose a normalized $\omega''$ such that $n(\Lambda_C, \overline{P_d}) = n(\omega'', \overline{P_d}).$ If $n(\Lambda_C, \overline{P_i}) = n(\omega'', \overline{P_i})$ for $i=1, \dots, d-1,$ we may take $\omega=\omega''$.

So suppose without loss of generality that $n(\omega'', \overline{P_1}) > n(\Lambda_C, \overline{P_1}).$ Let $t_2, \dots, t_{d-1}$ be uniformizers at $\overline{P_2}, \dots, \overline{P_{d-1}}$, respectively.  Write both $\overline{\omega'}$ and $\overline{\omega''}$ with respect to each uniformizer:
\begin{align*}
\overline{\omega'} = a_it_i^{n_i}dt_i, &\\
\overline{\omega''} = b_it_i^{n_i}dt_i, & \textup{ for } i=2,\dots,d-1,
\end{align*}
where for each $a_i, b_i \in \Fpbar(C_{\Fp})$, the geometric function field of the reduction, we have $0=v_{\overline{P_i}}(a_i) \leq v_{\overline{P_i}}(b_i).$ Since $p \geq d$, there exists $0 \neq \alpha \in \Fp$ such that $\alpha \cdot b_i(\overline{P_i}) \neq -a_i(\overline{P_i})$ for $i=2, \dots, d-1$.  Choosing any $\tilde{\alpha}\in \mZ_p$ whose reduction mod $p$ is $\alpha$, we may take $\omega=\omega'+\tilde{\alpha} \omega''$.
\end{proof}

\begin{lemma}\label{universalbasis}
Let $C/\QQ$ be a curve of genus $g \geq 2$, and let $p$ be a prime of good reduction. Let $\overline{P_1}, \dots, \overline{P_d} \in C_{\Fp}(\Fpbar)$, and suppose $r \leq g-d$.  Then there exist linearly independent, normalized $\omega_1, \dots, \omega_d \in \Lambda_C$ such that $n(\Lambda_C, \overline{P_i}) = n(\omega_j, \overline{P_i})$ for all $i,j=1, \dots, d$.
\end{lemma}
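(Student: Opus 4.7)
The plan is to extract a single form $\omega_1$ with the required minimum-vanishing property from Lemma \ref{universalform}, and then use a $p$-adic perturbation trick to manufacture $d-1$ additional forms sharing the same reduction as $\omega_1$, while still being linearly independent. The hypothesis $r \leq g - d$ enters through the dimension count $\dim_{\Qp} \Lambda_C \geq g - r \geq d$, giving enough freedom to perturb.

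Concretely, I would first apply Lemma \ref{universalform} (its hypothesis $r \leq g-1$ is subsumed by $r \leq g-d$; the condition $p \geq d$ is inherited implicitly) to obtain a normalized $\omega_1 \in \Lambda_C$ with $n(\omega_1, \overline{P_i}) = n(\Lambda_C, \overline{P_i})$ for every $i$. Next, since $\dim_{\Qp} \Lambda_C \geq d$, I extend $\omega_1$ to a $\Qp$-linearly independent collection $\{\omega_1, \eta_2, \ldots, \eta_d\} \subset \Lambda_C$, rescaling each $\eta_j$ so that it is normalized in the sense of the paper (integral with nonzero reduction).

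Finally, I define $\omega_j := \omega_1 + p\,\eta_j$ for $j = 2, \ldots, d$. Since $p\eta_j$ lies in $p$ times the integral lattice of forms, its reduction modulo $p$ vanishes, so $\overline{\omega_j} = \overline{\omega_1}$. Consequently each $\omega_j$ is still normalized, lies in the $\Qp$-subspace $\Lambda_C$ by linearity, and satisfies $n(\omega_j, \overline{P_i}) = n(\omega_1, \overline{P_i}) = n(\Lambda_C, \overline{P_i})$ for all $i$, because the vanishing order at $\overline{P_i}$ depends only on the reduction. To verify linear independence of $\omega_1, \omega_2, \ldots, \omega_d$, I unroll a relation $\sum_{j=1}^d c_j \omega_j = 0$ to
\[
\Bigl(\sum_{j=1}^d c_j\Bigr)\omega_1 \;+\; p\sum_{j=2}^d c_j\,\eta_j \;=\; 0,
\]
and invoke the $\Qp$-linear independence of $\{\omega_1, \eta_2, \ldots, \eta_d\}$ to conclude $c_j = 0$ for $j \geq 2$, hence also $c_1 = 0$.

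There is no real obstacle beyond the standard observation that perturbations by $p$ times an integral form preserve both membership in $\Lambda_C$ and the mod $p$ reduction we care about; the dimension hypothesis $r \leq g-d$ supplies exactly the room needed to carry out $d-1$ such perturbations in linearly independent directions.
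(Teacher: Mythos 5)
Your proposal is correct and matches the paper's argument essentially verbatim: take $\omega_1$ from Lemma \ref{universalform}, use the rank condition to find $d-1$ further normalized forms completing it to a linearly independent set, and replace them by $\omega_1 + p\,\eta_j$, which preserves the reduction and hence the vanishing orders. Your explicit verification of linear independence is a detail the paper leaves implicit, but the route is the same.
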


\begin{proof}
Take $\omega_1$ to be $\omega$ as given by Lemma \ref{universalform}.  By the rank condition, we can choose $\omega_2', \dots, \omega_d' \in \Lambda_C$ to be normalized forms such that $\omega_1, \omega_2', \dots, \omega_d'$ are linearly independent.  Then each reduction $\overline{\omega_1 + p\omega_j'} = \overline{\omega_1}$, so we can take $\omega_j = \omega_1 + p\omega_j'$ for $j=2,\dots,d$.
\end{proof}

\subsection{Vanishing of locally analytic functions}
Now let $C/\QQ$ be a genus $g$ curve with a marked rational point, which we denote by $\infty.$  For any $\omega \in H^0(C_{\Qp},\Omega^1)$, we define (locally analytic) functions 
\begin{align*}
 f_\omega \colon C(\Cp) & \lrra \Cp \\ 
 P& \longmapsto \int_0^{[P-\infty]}\omega,
\end{align*}
and more generally for $d$ a positive integer,  
\begin{align*}
 F_\omega^d \colon C(\Cp) \times \dots \times C(\Cp) &\lrra \Cp\\
 (P_1,\dots,P_d)&\longmapsto f_\omega(P_1)+\dots + f_\omega(P_d) = \int_0^{[P_1+\dots+P_d-d\infty]}\omega.
\end{align*}
The starting point of the Chabauty--Coleman method for examining rational points is that if $\omega \in \Lambda_C$, then for any $P \in C(\QQ)$, we have $f_\omega(P)=0$, because $[P-\infty] \in J(\QQ)$.  The starting point for our method, following \cite{siksek2009chabauty, park16}, is that for $\omega \in \Lambda_C$ and $(P_1,\dots,P_d)$ a $d$-tuple of conjugate degree $d$ points on $C$, we have $F_{\omega}^d(P_1,\dots,P_d)=0$, since $[P_1+\dots+P_d-d\infty] \in J(\QQ)$.

We wish to control these zeros.  For $\Pbar \in C_{\Fp}(\Fpbar)$, define the \textit{residue disk} $$D_{\Pbar} = \brk{Q \in C(\Cp) \ \left| \ \overline{Q} = \Pbar \right.}.$$   Let $D \subset \Cp$ be the open unit disk, i.e.~elements with absolute value strictly less than 1. For $P \in C(\overline{\mQ_p})$, \cite[Lemma 2.3]{siksek2009chabauty} asserts that we can always choose a \textit{well-behaved uniformizer} $z_P$ at $P$, which has the following key properties. First, the function $z_P\colon D_{\Pbar} \rightarrow D$ is a diffeomorphism, with $z_P(P)=0$.  Furthermore, for a finite extension $L/\mQ_p(P)$, with uniformizing element $\pi$, we have that $z_P$ defines a bijection between $C(L) \cap D_{\Pbar}$ and the $\pi$-adic disc $\pi\mcO_L$, given by $Q \mapsto z_P(Q)$.

\begin{remark}\label{valuationbound}
Let $v$ be the valuation on $\Cp$, normalized so that $v(p)=1$.  For $P, Q \in C(\overline{\mQ_p})$ such that $\Pbar = \overline{Q}$, let $e$ be the ramification degree of $\Qp(P,Q)$.  The above implies that $v(z_P(Q)) \geq \frac{1}{e}$.
\end{remark}

We can formally expand a normalized form $\omega$ with respect to the uniformizer $z_P$, as $$\pwr{\sum_{i=0}^\infty a_iz_P^i}dz_P,$$ where the coefficients live in $\Qp(P)$, and are integral ($v(a_i) \geq 0$ for all $i$).  We record a few important facts from \cite[Section 2]{siksek2009chabauty} about this expansion.  
First, the power series $\sum_{i=0}^\infty a_it^i$ is convergent on $D$.  Second, there is a connection to orders of vanishing: the smallest index $i$ for which we have $v(a_i)=0$ is given by $i=n(\omega, \Pbar)$.  Lastly, for $Q \in D_{\Pbar}$, the restriction of $f_\omega$ to $D_{\Pbar}$ is given by $$f_{\omega}(Q) =\int_0^{[P-\infty]}\omega + \sum_{i=0}^\infty \frac{a_i}{i+1}z_P(Q)^{i+1}.$$

Similarly, for $P_1, P_2 \in C(\overline{\mQ_p})$, the restriction of $F_\omega^2$ to $D_{\overline{P_1}} \times D_{\overline{P_2}}$ is given by $$F_\omega^2(Q_1,Q_2)  = \int_0^{[P_1+P_2-2\infty]}\omega + \sum_{i=0}^\infty \frac{a_i}{i+1}z_{P_1}(Q_1)^{i+1}  + \sum_{i=0}^\infty \frac{b_i}{i+1}z_{P_2}(Q_2)^{i+1} .$$
Analogous expansions of course hold for $F_\omega^d$, for arbitrary $d$.

\section{Analytic loci for low-rank hyperelliptic curves}\label{sec:analyticloci}
Besides the correction (see p.~2 and Section \ref{sec:proofmain}), Park’s \cite{park16} most general results require a second technical hypothesis (\textit{loc.~cit.~}Assumption~1.3) involving excess analytic intersection of the zero loci of the $F_{\omega}^d$'s for $\omega \in \Lambda_C$.  
In this section, we prove an assertion of Park (\textit{loc.~cit.~}p.~2) that this assumption always holds when $r\leq 1$; to ease notation and terminology, we restrict to the hyperelliptic setting.

Fix a hyperelliptic curve $C/\QQ$ of genus $g \geq 3$, with a rational Weierstrass point $\infty$, and embed $C$ in its Jacobian $J$ via the Abel--Jacobi map $C \rightarrow J$ given by $\infty$.  Let $p$ be a prime of good reduction for $C$.  Let $W_d \coloneqq C+\dots + C \subset J$ be the image of all degree $d$ effective divisors, and let $\Lambda_C$ be as in Section \ref{subsec:vanishingintegrals}.  Define $J^{\Lambda_C}$ to be the kernel of pairing with elements of $\Lambda_C$, i.e. $$J^{\Lambda_C} \coloneqq \brk{D \in J(\Cp) \  \left|  \ \int_0^D \omega = 0, \ \forall \, \omega \in \Lambda_C\right.}.$$

Note that $J^{\Lambda_C}$ is also a $\Cp$-analytic manifold (in the sense of Bourbaki and Serre \cite[Chapter~III]{serre2009lie}), and in fact a $p$-adic Lie group.  
If we assume that $J(\mQ)$ has rank $\leq 1$, then $J^{\Lambda_C}$ has dimension 0 or 1 (as a manifold).  The next two lemmas use the topology on $J(\Cp)$ and $C(\Cp) \times \cdots \times C(\Cp)$ given by their structures as $\Cp$-analytic manifolds, unless otherwise noted.

\begin{lemma}\label{lem:isoladedpoints}
Let $0< d < g$ be integers.
Assume that $J$ is geometrically simple and that $J(\mQ)$ has rank $\leq 1$.
Then $J^{\Lambda_C}\cap W_d$ consists of isolated points.
\end{lemma}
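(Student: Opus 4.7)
The plan is to argue by contradiction: if some point of $J^{\Lambda_C}\cap W_d$ were not isolated, I would extract a genuine $p$-adic subgroup of $J$ sitting inside a translate of $W_d$, then use geometric simplicity to force $W_d=J$, contradicting $d<g$. To set up, the hypothesis $r\leq 1$ gives $\dim_{\mathbb{Q}_p}\Lambda_C\geq g-1$, so the $p$-adic Lie subgroup $J^{\Lambda_C}\subseteq J(\mathbb{C}_p)$ has analytic dimension at most one; if this dimension is zero, then $J^{\Lambda_C}$ is discrete and the conclusion is immediate, so I focus on the one-dimensional case. Let $L\subseteq J^{\Lambda_C}$ denote the identity component, which via the formal logarithm has the form $L=\exp(\mathbb{C}_p\cdot v)$ for a nonzero $v\in\mathrm{Lie}(J)(\mathbb{C}_p)$; since $L$ is open in $J^{\Lambda_C}$, the quotient $J^{\Lambda_C}/L$ is discrete.

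Next, I would suppose for contradiction that some $x\in J^{\Lambda_C}\cap W_d$ fails to be isolated, producing a nonconstant analytic map $\gamma\colon D\to J$ from an open $p$-adic disc with $\gamma(0)=x$ and $\gamma(D)\subseteq J^{\Lambda_C}\cap W_d$. Any continuous map from a $p$-adic disc to the discrete quotient $J^{\Lambda_C}/L$ is constant, so $\gamma(D)$ lies in a single coset of $L$; translating by $-x$ yields a nonconstant analytic map $\tilde{\gamma}\colon D\to L\cap(W_d-x)$ with $\tilde{\gamma}(0)=0$. Lifting $\tilde{\gamma}=\exp\circ\hat{\gamma}$ locally near $0$ with $\hat{\gamma}\colon D'\to\mathbb{C}_p\cdot v$ nonconstant, the $p$-adic open mapping theorem produces an open disc $B\subseteq\mathbb{C}_p\cdot v$ around $0$ with $\exp(B)\subseteq\tilde{\gamma}(D')\subseteq W_d-x$. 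After shrinking $B$ into the convergence domain of $\exp$, the ultrametric absorbing identity $B+B=B$ implies that $\exp(B)$ is an honest subgroup of $J(\mathbb{C}_p)$ contained in $W_d-x$.

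Finally, I would invoke simplicity: the Zariski closure of a subgroup of $J$ is an algebraic subgroup, since addition and inversion on the projective variety $J$ are proper morphisms that preserve Zariski-closedness. Hence $\overline{\exp(B)}^{\mathrm{Zar}}$ is a positive-dimensional algebraic subgroup of $J$, and therefore equals $J$ by geometric simplicity. Since $W_d-x$ is Zariski closed and contains $\exp(B)$, it also contains this Zariski closure, giving $W_d=J$ and contradicting $\dim W_d=d<g=\dim J$.

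The main conceptual obstacle is the passage from a nonconstant analytic arc in the transcendental one-parameter subgroup $L$ to a genuine $p$-adic subgroup of $J$. That step depends essentially on the ultrametric absorbing identity $B+B=B$ for $p$-adic discs and has no archimedean analogue; the remaining ingredients (discreteness of $p$-adic Lie group quotients, the open mapping theorem for analytic functions on $\mathbb{C}_p$-discs, and the simplicity of $J$) are standard.
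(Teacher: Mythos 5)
There is a genuine gap at the very first step of your contradiction argument: from ``$x\in J^{\Lambda_C}\cap W_d$ fails to be isolated'' you immediately assert the existence of a nonconstant analytic map $\gamma\colon D\to J$ with $\gamma(0)=x$ and $\gamma(D)\subseteq J^{\Lambda_C}\cap W_d$. Non-isolation only says that points of the intersection accumulate at $x$; it does not by itself produce an analytic arc inside the set (for instance $\{0\}\cup\{p^n : n\geq 1\}\subset \Cp$ is closed, has a non-isolated point, and contains no nonconstant analytic arc). What makes the implication true here — and this is the actual content of the lemma — is the local structure: in the $1$-dimensional case a neighborhood of $x$ in $J^{\Lambda_C}$ is analytically a disc in one coordinate $t$, and $W_d$, being a Zariski-closed subvariety of $J$, is cut out near $x$ by finitely many convergent power series in $t$; if the common zero set has $x$ as an accumulation point, the identity theorem (isolated zeros of a nonzero one-variable convergent series) forces all these series to vanish identically, so an entire sub-disc of $J^{\Lambda_C}$ through $x$ lies in $W_d$. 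This dichotomy (finite versus the whole local disc) is exactly how the paper argues, and it is the step your write-up replaces by an unjustified ``producing.'' Note also that it uses that $W_d$ is an analytic subvariety, not merely a closed subset, which is where the algebraic geometry enters.

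Two further remarks. First, your justification that $\gamma(D)$ lies in a single coset of $L$ — ``any continuous map from a $p$-adic disc to the discrete quotient $J^{\Lambda_C}/L$ is constant'' — is false as stated, since $p$-adic discs are totally disconnected (locally constant nonconstant maps abound); but you do not need it: since $L$ is open in $J^{\Lambda_C}$, continuity lets you shrink the disc so that its image lies in the coset through $x$. Second, once the missing local argument is supplied, the rest of your proof is correct and runs parallel to the paper's conclusion: you manufacture an infinite subgroup $\exp(B)\subseteq W_d-x$ via the open mapping theorem and the ultrametric identity $B+B=B$, take Zariski closures, and use geometric simplicity plus $\dim W_d=d<g$ to conclude. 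The paper reaches the same endpoint more directly, because the local chart of $J^{\Lambda_C}$ is already a disc centered at the origin in the logarithm coordinate, so the sub-disc contained in $W_d$ (after translation) is itself a subgroup and no open mapping theorem is needed.
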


\begin{proof}
If $J^{\Lambda_C}$ is 0-dimensional, then we are done since $J^{\Lambda_C}$ is a closed subset of $J$.
 
If $J^{\Lambda_C}$ is 1-dimensional, let $P \in J^{\Lambda_C} \cap W_d$. We can choose a closed neighborhood $V$ of $P$ such that $V \cap J^{\Lambda_C}$ is diffeomorphic to a closed disk in $\Cp$ via \cite[Chapter~III, Section~3]{serre2009lie}. Since $W_d$ is a closed set, $V \cap J^{\Lambda_C} \cap W_d$ is given by the vanishing of a convergent 1-variable power series on this disk.  Thus, $V \cap J^{\Lambda_C} \cap W_d$ is either a finite set of points or all of $V\cap J^{\Lambda_C}$.

But in the latter case, note that $V\cap J^{\Lambda_C} \cap W_d$ is a translate of a closed disk centered at the origin, which makes it a translate of an infinite subgroup of $J^{\Lambda_C}$.  
Its Zariski closure would then be a translate of a positive-dimensional abelian subvariety of $J$ contained in $W_d$, but this contradicts our initial assumption that $J$ is geometrically simple. 
Therefore, $V\cap J^{\Lambda_C}\cap W_d$ is a finite set of points, so $P$ is isolated.
\end{proof}

Let $$(C^{d})^{\Lambda_C} \coloneqq \brk{(P_1,\dots,P_d) \in C(\mC_p)\times \dots \times C(\mC_p) \ \left|  \ F_{\omega}^d(P_1,\dots,P_d) =0, \, \forall \,\omega\in \Lambda_C \right. },$$
the inverse image of $J^{\Lambda_C}$ in $C^d$. For any subset $S \subset \Lambda_C$, let $(C^d)^S$ similarly denote pairs satisfying the condition for all $\omega \in S$. 

\begin{lemma}\label{isolatedincurve}
Let $0< d < g$ be integers. Assume that $J$ is geometrically simple and that $J(\mQ)$ has rank $\leq 1$.
Let $P = (P_1,\dots,P_d)$ be a point of $C^{d}(\mC_p)$ such that the divisor $P_1 + \cdots + P_d $ is non-special. 
Then $P$ is an isolated point of $(C^{d})^{\Lambda_C}$.
\end{lemma}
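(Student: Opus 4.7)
The plan is to use the $p$-adic analytic summation map $\sigma_d \colon C^d \to J$, $(Q_1,\ldots,Q_d) \mapsto [Q_1+\cdots+Q_d - d\infty]$, and to reduce the desired isolation of $P$ in $(C^d)^{\Lambda_C}$ to the isolation of $\sigma_d(P)$ in $J^{\Lambda_C} \cap W_d$ provided by Lemma \ref{lem:isoladedpoints}. By construction $(C^d)^{\Lambda_C} = \sigma_d^{-1}(J^{\Lambda_C})$, and $\sigma_d(P)$ lies in $J^{\Lambda_C} \cap W_d$. Lemma \ref{lem:isoladedpoints} furnishes an open neighborhood $U \subset J(\Cp)$ of $\sigma_d(P)$ with $U \cap J^{\Lambda_C} \cap W_d = \{\sigma_d(P)\}$. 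Setting $V \coloneqq \sigma_d^{-1}(U)$, which is open in $C^d(\Cp)$ by continuity of $\sigma_d$, any $Q \in (C^d)^{\Lambda_C} \cap V$ satisfies $\sigma_d(Q) \in U \cap J^{\Lambda_C} \cap W_d = \{\sigma_d(P)\}$; hence $(C^d)^{\Lambda_C} \cap V \subseteq \sigma_d^{-1}(\sigma_d(P))$.

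The remaining step is to show that the fiber $\sigma_d^{-1}(\sigma_d(P))$ is finite. Once this is known, $P$ has an open neighborhood $V' \subseteq V$ in $C^d(\Cp)$ disjoint from every other point of the fiber, and then $(C^d)^{\Lambda_C} \cap V' = \{P\}$, so $P$ is isolated. A tuple $(Q_1,\ldots,Q_d)$ lies in the fiber precisely when the divisor $Q_1 + \cdots + Q_d$ is linearly equivalent to $D \coloneqq P_1 + \cdots + P_d$. Reading the non-specialness of $D$ as $h^0(C,\cO_C(D)) = 1$ --- equivalently, that the complete linear system $|D|$ consists of the single divisor $D$ --- forces $Q_1 + \cdots + Q_d = D$ as effective divisors. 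Thus $(Q_1,\ldots,Q_d)$ is a permutation of $(P_1,\ldots,P_d)$, so the fiber has at most $d!$ elements.

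The main subtlety is fixing the correct interpretation of ``non-special''. Since $0 < d < g$, every effective divisor of degree $d$ automatically has $h^0(K - D) \geq g - d > 0$, so non-special cannot mean $h^1(D) = 0$ in the classical Riemann--Roch sense; one must instead read it as the condition $h^0(D) = 1$, which is exactly what makes the Abel--Jacobi map $C^{(d)} \to J$ an immersion at the image of $D$ and ensures that the preimage of a single point in $W_d$ is a finite subset of $C^d$. With this interpretation, no further analytic machinery is needed: the argument reduces purely to Lemma \ref{lem:isoladedpoints} plus the elementary finiteness of $|D|$.
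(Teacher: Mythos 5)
Your proof is correct and follows essentially the same route as the paper: reduce to Lemma \ref{lem:isoladedpoints} via the map to $J$, use finiteness of the fiber of $C^d \to W_d$ over the class of the non-special divisor $P_1+\cdots+P_d$, and separate the finitely many fiber points using the Hausdorff topology on $C^d(\Cp)$. Your reading of ``non-special'' as $h^0(D)=1$ (zero-dimensional Abel--Jacobi fiber) is exactly the sense intended in the paper, so no gap remains.
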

\begin{proof}
The set of $d$-tuples in $C^{d}(\mC_p)$ which give special divisors is a closed subset. Away from this subset, the fibers of the map $C^d \rightarrow W_d$ are finite.
The result then follows from Lemma \ref{lem:isoladedpoints} and the fact that $C \times \cdots \times C$ is Hausdorff in its topology as a $\Cp$-analytic manifold.
\end{proof}

To conclude this section, we consider the locally affinoid structure of $(C^{d})^{\Lambda_C}$. 

\begin{defn}
For $P \in C(\Cp)$, $z_P$ a well-behaved uniformizer at $P$, and $m > 0$, let $$B_m(P,z_P) \coloneqq \brk{Q \in C(\Cp) \ \left| \ v(z_P(Q)) \geq m \right.}.$$
\end{defn}

Since $F_\omega^d$ has a convergent power series expression on the entire open polydisk $D_{\overline{P_1}} \times \cdots \times D_{\overline{P_d}}$, on any closed sub-polydisk it will actually give an element of that sub-polydisk's affinoid coordinate ring, which is a Tate algebra \cite[Section~7.1.1]{bosch1984non}.    
For any choices of $P_1, \dots ,P_d$ and $z_{P_1},\dots, z_{P_d}$ and $m > 0$, the set $$(C^d)^{\Lambda_C} \cap (B_m(P_1,z_{P_1}) \times \cdots \times B_m(P_d,z_{P_d}))$$ will have finitely many irreducible components as an affinoid subset of $B_m(P_1,z_{P_1}) \times \cdots \times B_m(P_d,z_{P_d})$, by \cite[Sect.~7.1.1, Cor.~8]{bosch1984non}. 
These components can be zero-dimensional or positive-dimensional.

\begin{lemma}\label{bestuniversalbasis}
Let $C/\QQ$ be a curve of genus $g \geq 2$, let $d$ be a positive integer, and let $p$ be a prime of good reduction for $C$. Suppose $C$ has rank $r \leq g-d$. Let $P_1, \dots, P_d \in C(\Cp)$, let $z_{P_i}$ be a well-behaved uniformizer at $P_i$ for $i =1,\dots, d$, and let $m > 0$. Then we can choose $\omega_1, \dots, \omega_d \in \Lambda_C$ as in Lemma \ref{universalbasis} such that furthermore the zero set $$(C^d)^{\{\omega_1,\dots, \omega_d\}} \cap (B_m(P_1,z_{P_1}) \times \dots \times B_m(P_d,z_{P_d}))$$ has the same positive-dimensional components as $$(C^d)^{\Lambda_C}\cap (B_m(P_1,z_{P_1}) \times \dots \times B_m(P_d,z_{P_d})).$$
\end{lemma}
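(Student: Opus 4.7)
The plan is to refine the construction in Lemma \ref{universalbasis} by imposing an additional Bertini-style genericity condition that rules out extraneous positive-dimensional components. Set $B := B_m(P_1, z_{P_1}) \times \cdots \times B_m(P_d, z_{P_d})$ with Tate algebra $T$, and write $A' := (C^d)^{\Lambda_C} \cap B$. By the $\Qp$-linearity of $\omega \mapsto F_\omega^d$ together with the Noetherianity of $T$, for any $\Qp$-basis $\eta_1, \ldots, \eta_N$ of $\Lambda_C$ one has $A' = V(F_{\eta_1}^d, \ldots, F_{\eta_N}^d) \cap B$, so $A'$ is an affinoid subvariety with only finitely many irreducible components; let $Y_1, \ldots, Y_s$ denote its positive-dimensional ones.

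First I would observe that the conditions of Lemma \ref{universalbasis} on a tuple $(\omega_1, \ldots, \omega_d) \in \Lambda_C^d$, namely matching $n(\omega_j, \overline{P_i}) = n(\Lambda_C, \overline{P_i})$ for all $i, j$ together with $\Qp$-linear independence, depend only on the mod-$p$ reductions and determine a non-empty Zariski-dense open condition $U_1$ on tuples of normalized forms. I would then introduce the set
\[
U_2 := \left\{ (\omega_1, \ldots, \omega_d) \in \Lambda_C^d \, : \, A_{(\omega_1, \ldots, \omega_d)} \text{ has no positive-dimensional components outside } A' \right\},
\]
where $A_{(\omega_1, \ldots, \omega_d)} := V(F_{\omega_1}^d, \ldots, F_{\omega_d}^d) \cap B$, and argue that $U_2$ is a non-empty Zariski-open condition on $\Lambda_C^d$. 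Any $\Qp$-rational tuple in $U_1 \cap U_2$ then suffices: since $A' \subseteq A_{(\omega_1, \ldots, \omega_d)}$, each $Y_j$ lies in some irreducible component $W$ of the larger set, which is positive-dimensional because $Y_j$ is; the $U_2$ condition then forces $W \subseteq A'$, and maximality of $Y_j$ in $A'$ gives $W = Y_j$.

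To establish that $U_2$ is non-empty and Zariski-open, I would perform a dimension count on the incidence variety
\[
\mathcal{I} := \left\{ (\omega, Q) \in (\Lambda_C^d \otimes_{\Qp} \Cp) \times (B \setminus A') \, : \, F_{\omega_j}^d(Q) = 0 \text{ for } j = 1, \ldots, d \right\}.
\]
For each $Q \in B \setminus A'$, the $\Cp$-linear functional $\omega \mapsto F_\omega^d(Q)$ on $\Lambda_C \otimes_{\Qp} \Cp$ is nonzero by definition of $A'$, so its kernel $L_Q$ has $\Cp$-codimension one, and the fiber of $\mathcal{I} \to B \setminus A'$ over $Q$ is $L_Q^d$ of $\Cp$-dimension $d(N-1)$. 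Combined with $\dim(B \setminus A') \leq d$, this gives $\dim \mathcal{I} \leq dN = \dim (\Lambda_C^d \otimes_{\Qp} \Cp)$, so upper-semicontinuity of fiber dimension for the projection $\mathcal{I} \to \Lambda_C^d \otimes_{\Qp} \Cp$ yields that the generic fiber $A_{(\omega_1, \ldots, \omega_d)} \cap (B \setminus A')$ is zero-dimensional. Since the defining data are $\Qp$-rational, this genericity condition descends to a non-empty Zariski-open condition $U_2$ on $\Lambda_C^d$ over $\Qp$.

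The main obstacle will be making this dimension count and the subsequent semicontinuity argument fully rigorous in the mixed rigid-analytic setting, where $\Lambda_C^d \otimes_{\Qp} \Cp$ is a $\Cp$-affine space while $B$ is a $\Cp$-analytic polydisk and $\mathcal{I}$ mixes the two structures. This would be handled by invoking Noetherianity of the relevant Tate algebras together with the standard upper-semicontinuity of fiber dimension for morphisms of affinoid spaces, so that the generic-fiber dimension bound produces the desired Zariski-open locus $U_2$.
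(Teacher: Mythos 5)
There is a genuine gap at the crucial step, and it is precisely the step you defer as ``the main obstacle.'' Your reduction to finding a tuple in $U_1\cap U_2$ is fine, but the existence of such a tuple is the entire content of the lemma, and the incidence-variety argument does not establish it as written. The set $\mathcal{I}$ is neither an algebraic variety nor an affinoid: its base $B\setminus A'$ is the complement of an analytic subset of a polydisk, hence not quasi-compact, and the projection to $\Lambda_C^d\otimes_{\Qp}\Cp$ is not a morphism of affinoids, so Noetherianity of Tate algebras and semicontinuity of fiber dimension for affinoid morphisms do not apply directly. Exhausting $B\setminus A'$ by affinoids yields at best that the ``bad'' locus of tuples is a countable union of proper closed subsets, which need not be contained in a single proper Zariski-closed subset; so the claimed Zariski-open $U_2$, its descent from $\Cp$ to $\Qp$, and the existence of a $\Qp$-rational point in it all require further argument (Baire-category or measure-theoretic, say), none of which is supplied. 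In addition, $U_1$ is not a Zariski-dense open condition on $\Lambda_C^d$: being normalized and achieving $n(\omega_j,\overline{P_i})=n(\Lambda_C,\overline{P_i})$ are conditions on $p$-adic valuations of coefficients of local expansions, so they cut out a ($p$-adically open, scaling-invariant) set that is not Zariski-open, and the final step ``pick a $\Qp$-rational tuple in $U_1\cap U_2$'' needs $p$-adic density of the good locus, not Zariski-density.

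For comparison, the paper's proof avoids all global constructibility statements by an explicit perturbation, following Park's Proposition 5.7: one proves by induction on $k$ that $\omega_1,\dots,\omega_k$ can be chosen so that the components of codimension at most $k-1$ of $(C^d)^{\{\omega_1,\dots,\omega_k\}}\cap(B_m\times\cdots\times B_m)$ agree with those of $(C^d)^{\Lambda_C}\cap(B_m\times\cdots\times B_m)$. At the induction step one takes $\omega_{k+1}$ (subject to the constraints of Lemmas \ref{universalform} and \ref{universalbasis}) minimizing the number of codimension-$k$ components; if an extraneous component $V_s$ not belonging to the $\Lambda_C$-locus appears, one picks $\omega'\in\Lambda_C$ with $F^d_{\omega'}$ nonvanishing at a point of $V_s$ and replaces $\omega_{k+1}$ by $\omega_{k+1}+p^n\omega'$ with $n$ large. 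For large $n$ this form still does not vanish identically on the other codimension-$k$ components, has the same reduction mod $p$ (so normalization, the vanishing-order conditions, and linear independence persist), yet eliminates $V_s$, contradicting minimality. If you wish to keep your Bertini-style strategy you would need to prove a constructibility/properness statement for the bad locus in this rigid-analytic setting; otherwise a finite perturbation argument of the above kind is the workable route.
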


\begin{proof}
The proof is similar to \cite[Proposition 5.7]{park16}, and proceeds by induction. \\ 

\noindent{\em Claim}: For $k=1,\dots,d$, we can choose $\omega_1,\dots, \omega_k$ as in Lemma \ref{universalbasis} such that the set of components of codimension at most $k-1$ for $$(C^d)^{\{\omega_1,\dots, \omega_k\}} \cap (B_m(P_1,z_{P_1}) \times \dots \times B_m(P_d,z_{P_d}))$$
and
$$(C^d)^{\Lambda_C} \cap (B_m(P_1,z_{P_1}) \times \dots \times B_m(P_d,z_{P_d}))$$
are the same.\\

\begin{proof}[Proof of Claim]
The base case $k=1$ is trivial.  For the induction step, suppose it holds for a given value of $k$, for forms $\omega_1,\dots,\omega_k$. Choose $\omega_{k+1}$ linearly independent from $\omega_1,\dots, \omega_k$, and as in Lemma \ref{universalform}, such that 
$$(C^d)^{\{\omega_1,\dots, \omega_{k+1}\}} \cap (B_m(P_1,z_{P_1}) \times \dots \times B_m(P_d,z_{P_d}))$$
has the minimal number of 
 codimension $k$ components $V_1,\dots,V_s$ for any such choice of $\omega_{k+1}$.
 
 Suppose the conclusion of the claim is false for $k+1$.  Then without loss of generality, we may assume that $V_s$ is not a component of $(C^d)^{\Lambda_C} \cap (B_m(P_1,z_{P_1}) \times \dots \times B_m(P_d,z_{P_d}))$.  Let $V_{s+1}, \dots, V_t$ be any codimension $k$ components of $(C^d)^{\{\omega_1,\dots, \omega_k\}} \cap (B_m(P_1,z_{P_1}) \times \dots \times B_m(P_d,z_{P_d}))$ that are distinct from each $V_1, \dots, V_s$.

Since $V_s$ is not a component of $(C^d)^{\Lambda_C} \cap (B_m(P_1,z_{P_1}) \times \dots \times B_m(P_d,z_{P_d}))$, we may choose a normalized form $\omega' \in \Lambda_C$ such that $F_{\omega'}$ does not vanish at some point $R_s$ of $V_s$.  Then for any integer $n$, we have that $F^d_{\omega_{k+1}+p^n\omega'}$ does not vanish identically on $V_s$, since $F^d_{\omega_{k+1}}$ does but $F^d_{p^n\omega'}=p^nF^d_{\omega'}$ does not.

For each $i=s+1, \dots, t$, we can choose a point $R_i$ on $V_i$ at which $F^d_{\omega_{k+1}}$ does not vanish.  Choose $n$ to be a sufficiently large positive integer such that $v(F_{p^n\omega'}(R_i)) = n + v(F_{\omega'}(R_i)) > v(F_{\omega_{k+1}}(R_i))$ for each $i=s+1, \dots, t$. Then $F_{\omega_{k+1} + p^n\omega'}$ does not vanish identically on any of $V_{s+1}, \dots, V_t$.

Note that $\omega_{k+1}+p^n\omega'$ has the same reduction as $\omega_{k+1}$, and thus the conclusion of Lemma \ref{universalbasis} still holds. Also, it is linearly independent from $\omega_1,\dots, \omega_k$, since $F^d_{\omega_1},\dots,F^d_{\omega_k}$ vanish at $R_s$ and our integration pairing is $\Qp$-linear.  But by construction, the codimension $k$ components of $(C^d)^{\{\omega_1,\dots, \omega_d, \omega_{k+1}+p^n\omega'\}} \cap (B_m(P_1,z_{P_1}) \times \dots \times B_m(P_d,z_{P_d}))$ are contained in $\{V_1, \dots, V_{s-1}\}$, which contradicts the minimality of $\omega_{k+1}$.
\end{proof}
The lemma is the $k=d$ case of the claim.
\end{proof}

\section{Bounding the number of unexpected degree $d$ points}\label{sec:proofmain}
In this section, we prove the first two statements of Theorem \ref{thm:degreedbound}. We refer the reader to Subsection \ref{subsec:term} for the definition of unexpected degree $d$ points.

Let $(P_1,\dots,P_d)$ be a conjugate $d$-tuple of degree $d$ points.

\begin{lemma}\label{lem:rankonelemmadegreed}
Let $d>1$ be a positive integer, let $C/\QQ$ be a hyperelliptic curve of genus $g > d$, with a rational Weierstrass point, geometrically simple Jacobian with $r \leq 1$, and let $p$ be an odd prime of good reduction for $C$.  
Let $P_{1},\cdots ,P_d\in C(\Qbar)$ be a conjugate $d$-tuple with well-behaved uniformizers $z_{P_{1}},\dots , z_{P_{d}}$. 
Let $(Q_1, \dots , Q_d)$ be a $d$-tuple of unexpected conjugate degree $d$ points with the same reduction as $(P_1,\dots ,P_d)$ modulo $p$.  

Then $\{(Q_1,\dots,Q_d)\}$ is a zero-dimensional component of $$(C^d)^{\Lambda_C}\cap (B_{\frac{1}{d^2}}(P_{1},z_{P_{1}}) \times  \cdots \times  B_{\frac{1}{d^2}}(P_{d},z_{P_{d}})).$$
\end{lemma}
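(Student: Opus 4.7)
The plan is to verify three claims in turn. First, that the tuple $(Q_1,\ldots,Q_d)$ actually lies in the intersection stated. Second, that the divisor $Q_1+\cdots+Q_d$ is non-special. Third, that these two facts combine, via Lemma~\ref{isolatedincurve}, to give the desired zero-dimensional-component conclusion.

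For the first step, since $(Q_1,\ldots,Q_d)$ is a $\gal(\overline{\QQ}/\QQ)$-orbit, the class $[Q_1+\cdots+Q_d-d\infty]$ lies in $J(\QQ)$, so by the defining property of $\Lambda_C$ we have $F^d_\omega(Q_1,\ldots,Q_d)=\int_0^{[Q_1+\cdots+Q_d-d\infty]}\omega=0$ for every $\omega\in\Lambda_C$; hence $(Q_1,\ldots,Q_d)\in(C^d)^{\Lambda_C}$. For the polydisk condition, after fixing an embedding $\overline{\QQ}\hookrightarrow\overline{\QQ_p}$, each of $\QQ_p(P_i)$ and $\QQ_p(Q_i)$ has degree at most $d$ over $\QQ_p$, so $\QQ_p(P_i,Q_i)$ has degree at most $d^2$, and in particular ramification index $e\leq d^2$. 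Remark~\ref{valuationbound} then yields $v(z_{P_i}(Q_i))\geq 1/e\geq 1/d^2$, placing $Q_i$ in $B_{1/d^2}(P_i,z_{P_i})$.

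For the second step I would argue as in Proposition~\ref{prop:finitenessdegreedpoints}. Since the hyperelliptic map $h\colon C\to\mP^1$ has fibers of size at most $2$, the multiset $\{h(Q_1),\ldots,h(Q_d)\}$ consists of $k$ distinct points each with a uniform multiplicity $m\in\{1,2\}$ satisfying $km=d$; uniformity follows from the transitivity of the Galois action on $\{Q_1,\ldots,Q_d\}$. If $d$ is odd, this forces $m=1$; if $d$ is even, $m=2$ corresponds precisely to $Q_1+\cdots+Q_d$ being pulled back from a degree $d/2$ divisor on $\mP^1$, and so is excluded by the unexpected hypothesis, forcing $m=1$ again. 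Thus the $h(Q_i)$ are pairwise distinct, so $Q_1+\cdots+Q_d$ contains no subdivisor of the form $R+\iota(R)$, and by \cite[p.~13]{acgh} it therefore has rank zero; for $d<g$ this is equivalent to non-specialness.

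For the third step, Lemma~\ref{isolatedincurve} applies (given geometric simplicity, rank at most $1$, and the just-verified non-specialness), so $(Q_1,\ldots,Q_d)$ is an isolated point of $(C^d)^{\Lambda_C}$ in the $\Cp$-analytic topology. Intersecting with the closed polydisk $B_{\frac{1}{d^2}}(P_1,z_{P_1})\times\cdots\times B_{\frac{1}{d^2}}(P_d,z_{P_d})$, the isolation passes to this affinoid subset, so $\{(Q_1,\ldots,Q_d)\}$ is precisely a zero-dimensional component of that intersection. The main delicate step is the second one: carefully tracking the Galois combinatorics under the hyperelliptic map to rule out $R+\iota(R)$ subdivisors, where $d<g$, the parity of $d$, and the unexpected hypothesis all come into play. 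The remaining work is largely bookkeeping, combining the $p$-adic inputs of Section~\ref{sec:padicprem} with Lemma~\ref{isolatedincurve}.
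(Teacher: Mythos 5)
Your proof is correct and takes essentially the same route as the paper: Remark~\ref{valuationbound} (via the degree-$\leq d^2$, hence ramification-$\leq d^2$, field $\QQ_p(P_i,Q_i)$) places the tuple in the polydisk, and Lemma~\ref{isolatedincurve} gives isolation; the paper's proof is simply terser, leaving implicit both the membership in $(C^d)^{\Lambda_C}$ and the rank-zero verification, which you supply by the same Galois/unexpectedness argument used in Proposition~\ref{prop:finitenessdegreedpoints}. One small caveat: under the classical definition ($h^1(D)>0$) every effective divisor of degree $d<g$ is special, so your parenthetical ``rank zero is equivalent to non-specialness'' should be read in the sense the paper intends in Lemma~\ref{isolatedincurve}, namely that the divisor moves in a zero-dimensional linear system and hence has finite Abel--Jacobi fiber --- which is exactly the property your argument establishes, so the application of that lemma goes through.
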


\begin{proof}
By Remark \ref{valuationbound}, $\{(Q_1,\dots , Q_d)\} $ is contained in $$B_{\frac{1}{d^2}}(P_{1},z_{P_{1}}) \times  \cdots \times  B_{\frac{1}{d^2}}(P_{d},z_{P_{d}}). $$  By Lemma \ref{isolatedincurve}, we have that $\{(Q_1,\dots , Q_d)\}$ is a zero-dimensional component of $$(C^d)^{\Lambda_C}\cap (B_{\frac{1}{d^2}}(P_{1},z_{P_{1}}) \times  \cdots \times  B_{\frac{1}{d^2}}(P_{d},z_{P_{d}})).$$
\end{proof}

In light of Lemma~\ref{lem:rankonelemmadegreed}, we work to bound the number of these zero-dimensional components. 
We use the convention that $v(0)=\infty$.  For a positive rational number $m$, let $D_m = \{\alpha \in \Cp \ | \ v(\alpha) \geq m\}$, let $D^d_m$ denote its $d$-fold product, and let $\Cp\langle D^d_m\rangle$ denote the Tate algebra of functions in its affinoid coordinate ring. 
The key tool we use to control zero-dimensional components comes from Park \cite{park16} and builds off results of Rabinoff \cite{rabinoff2012tropical} in tropical deformation and tropical intersection theory.

Before we can state the theorem, we recall some definitions. 

\begin{defn}
For $F(t_1,\dots,t_d) = \sum_{u \in \mZ_{\geq 0}^d} a_ut^u \in \Cp\langle D^d_m\rangle$, we define the \textit{tropicalization of $F$} as 
\[
\Trop(F) := \overline{\brk{(v(z_1),\dots,v(z_d)) : F(z_1,\dots,z_d) = 0, (z_1,\dots,z_d) \in D_m^d)}},
\]
where we take the topological closure in $\mR^d$.
\end{defn}

Next, for a set $S \subset \RR^d$, let $\conv(S)$ denote its convex hull.
A necessary condition for a series to sum to $0$ in a non-Archimedean field is that it has a term of minimal valuation, and that this term is not unique.  In the below definition, the relevant $(w_1, \dots , w_d)$ can thus be thought of as candidates for the coordinate-wise valuations of zeros for the power series $F$, where we only look for zeros whose coordinates have valuation at least $m$, and 
the $(u_1, \dots , u_d)$ are the multi-indices of terms that could have minimal valuation after plugging in such a zero. 
\begin{defn}
Let $m$ be a positive rational number, and let $F(t_1,\dots,t_d) = \sum_{u \in \mZ_{\geq 0}^d} a_ut^u \in \Cp\langle D^d_m\rangle$.  We define the \textit{Newton polygon} of $F$ (with respect to $m$) to be the set $\New_m(F) \subset \RR^d$ given by 
\begin{align*}
\New_m(F) := \conv(\{& u =(u_1, \dots, u_d) \in  \mZ_{\geq 0}^d \ | \  \exists (w_1, \dots, w_d) \in \QQ^d \text{ with each } w_i \geq m \text{ s.t.}\\
& \exists \, u' \in \mZ_{\geq 0}^d \text{ with } u \neq u' \text{ and } v(a_u) + \sum_i w_iu_i = v(a_{u'}) + \sum_i w_iu'_i,\\
&  \text{and } \forall\, u''  \in \mZ_{\geq 0}^d, v(a_u) + \sum_i w_iu_i \leq v(a_{u''}) + \sum_i w_iu''_i \}).
\end{align*}
\end{defn}

A \textit{polytope} is the convex hull of finitely many points of a Euclidean space.  We need to define the mixed volume of a collection of polytopes in a given Euclidean space.  For polytopes $Z_1, \dots, Z_d \subset \RR^d$ and positive real numbers $\lambda_1, \dots, \lambda_d$, the volume of the scaled Minkowski sum $\lambda_1Z_1 + \dots + \lambda_dZ_d = \{\lambda_1z_1 + \dots + \lambda_dz_d  \mid  z_i \in Z_i\}$ is known to be given by a homogeneous polynomial of degree $d$ in the coefficients $\lambda_1, \dots, \lambda_d$.  The \textit{mixed volume} of the polytopes, denoted $\MV(Z_1, \dots, Z_d)$, is defined to be the coefficient of $\lambda_1\lambda_2\cdots\lambda_d$ in that polynomial.

\begin{exam}\label{mvexample}
For $a \geq 1$ and $d$ a positive integer, let $$Z_1 = \dots = Z_d = \conv(\{e_1, \dots, e_d, ae_1, \dots, ae_d\}) \subset \RR^d,$$
where $e_i$ is the vector with a 1 in the $i$-th place and $0$'s elsewhere. Then $$\lambda_1Z_1 + \cdots +\lambda_dZ_d = \conv((\lambda_1+\dots +\lambda_d)\{e_1, \dots, e_d, ae_1, \dots, ae_d\}),$$ and a quick calculation gives $\MV(Z_1,\dots,Z_d) = a^d-1$.
\end{exam}

\begin{theorem}[\protect{\cite[Theorem~4.18 \& Proposition 5.7]{park16}}]\label{parkthm}
Let $m$ be a positive rational number, and let $F_1, \dots, F_d \in \Cp\langle D^d_m\rangle$ be power series such that for any $i, j \in \{1, \dots, d\}$, $F_i$ has a term of the form $c t_j^N$, with $c \neq 0$ and $N>0$.

Suppose that the tropicalization of an isolated point in the intersection of the zero loci of the $F_i$ is isolated in the intersection of the tropicalizations $\Trop(F_i)$. 
Then the number of zero-dimensional components of the zero locus $V(F_1, \dots, F_d)$ in $D^d_m \cap (\Cp^\times)^d$ is at most $\MV(\New_m(F_1), \dots, \New_m(F_d)).$
\end{theorem}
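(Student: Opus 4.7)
The plan is to combine the tropical Bernstein--Kouchnirenko theorem with Rabinoff's $p$-adic intersection theory from \cite{rabinoff2012tropical}, passing from a global analytic count to a sum of local mixed-volume bounds. Each zero-dimensional component $P = (P_1, \dots, P_d) \in V(F_1, \dots, F_d) \cap D_m^d \cap (\Cp^\times)^d$ has tropicalization $w := (v(P_1), \dots, v(P_d))$ lying in $\Trop(F_1) \cap \cdots \cap \Trop(F_d)$. By the isolation hypothesis in the statement, $w$ is an isolated point of this tropical intersection, so it suffices to bound, for each such isolated $w$, the number of zero-dimensional components of $V(F_1, \dots, F_d)$ lying above $w$.

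For a fixed isolated tropical intersection point $w$, I would pass to initial degenerations: replace each $F_i$ by the finite Laurent polynomial $\init_w(F_i)$ supported on the face of $\New_m(F_i)$ minimizing $u \mapsto v(a_u) + \langle w, u\rangle$. The hypothesis that each $F_i$ contains a pure power $c\, t_j^N$ with $c \neq 0$ and $N > 0$ in every coordinate direction $j$ guarantees that $\New_m(F_i)$ has extent along every axis, so the initial form cannot collapse to a single monomial and the mixed volumes that appear remain finite. Rabinoff's theory then supplies a local bound: the number of analytic zeros of the system above $w$ in $(\Cp^\times)^d$, counted with multiplicity, is at most the mixed volume of the local Newton faces. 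Summing over all isolated intersection points and invoking the tropical analog of the Bernstein--Kouchnirenko identity --- which decomposes the global mixed volume $\MV(\New_m(F_1), \dots, \New_m(F_d))$ as a sum of local mixed-volume contributions at isolated tropical intersection points --- then produces the claimed inequality.

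The main obstacle will be this local step, where Rabinoff's intersection count must be imported cleanly into the Tate-algebra setting. One must verify that every analytic zero of the $F_i$ above $w$ is already accounted for by the zero locus of the initial-form system (after a suitable integral change of coordinates) and that no mass escapes to the boundary of the relevant affinoid domain during the degeneration from convergent power series to their polynomial initial parts. The pure-power hypothesis $c\, t_j^N$ is precisely what prevents such pathology, since it forces $\New_m(F_i)$ to remain bounded along each coordinate direction, ensuring that the initial-form system is zero-dimensional and that the global Newton polygons are effectively compact polytopes whose mixed volumes are both finite and geometrically meaningful.
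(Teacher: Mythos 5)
This statement is not proved in the paper at all: it is imported (with the isolation hypothesis added as a correction, cf.\ Remark \ref{remark:assdiscussion}) from Park's Theorem~4.18 and Proposition~5.7 in \cite{park16}, which in turn rest on Rabinoff's tropical intersection theory \cite{rabinoff2012tropical}. Your outline --- tropicalize the zero-dimensional components, use the isolation hypothesis to localize at isolated points of $\Trop(F_1)\cap\cdots\cap\Trop(F_d)$, bound the fiber over each such point via initial degenerations and Rabinoff's local multiplicity bound (with the pure-power terms $c\,t_j^N$ keeping the Newton polytopes nondegenerate in every coordinate direction), and sum the local contributions against $\MV(\New_m(F_1),\dots,\New_m(F_d))$ --- is essentially the same route taken in those sources, so it is consistent with the argument the paper relies on, though as you acknowledge it remains a sketch whose local step is exactly where Rabinoff's machinery must be carried out in detail.
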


\begin{remark}\label{remark:assdiscussion}
The condition that the tropicalization of an isolated point in the intersection of the zero loci of the $F_i$ is isolated in the intersection of the tropicalizations does not appear in \cite{park16}, but does appear in the previous theorem from \cite{rabinoff2012tropical} that Park uses. It seems to be necessary if one wishes to use the mixed volume $\MV(\New_m(F_1), \dots, \New_m(F_d))$ to bound the number of zero-dimensional components of $V(F_1, \dots, F_d)$ in $D^d_m \cap (\Cp^\times)^d$.

If the tropicalization of an isolated point in the intersection of the zero loci of the $F_i$ lands in a  positive-dimensional component of the intersection of their tropicalizations, then one could hope to use work of Osserman--Rabinoff \cite{osserman2011lifting} to bound the number of zero-dimensional components in terms of the \textit{stable tropical intersection}; see \cite{Jensen2016} for definitions and results concerning stable tropical intersections. 
However, one cannot use mixed volumes of Newton polygons to compute these stable intersection numbers.
\end{remark}

Since we use Theorem \ref{parkthm} to explicitly bound the number zero-dimensional components from Lemma \ref{lem:rankonelemmadegreed}, we include an assumption.

\begin{ass}\label{ass:Park}
For each $g\geq 2$, more than $75\%$ of hyperelliptic curves over $\mQ$ of genus $g$ with a rational Weierstrass point satisfy the following condition:
\begin{equation}
\begin{array}{l}
\text{for each positive rational number $m$ and each residue disk } D_m^{d}, \text{there exist $d$ linearly} \\
\text{independent $1$-forms $\omega_1,\dots, \omega_d$ as in Lemma \ref{bestuniversalbasis}, such that the tropicalization of any} \\
\text{isolated point in the intersection of the zero loci of the } F_{\omega_i}^d \text{ is isolated in the intersection} \\
\text{of the tropicalizations} \Trop(F_{\omega_i}^d).
\end{array}
 \tag{\(\dagger\)}
\end{equation} 
Furthermore, the same holds if one averages within a family defined by a finite set of congruence conditions.
\end{ass}


Returning to the proof of the first two statements of Theorem \ref{thm:degreedbound}, we use Theorem \ref{parkthm} to bound the maximal number of zero-dimensional components mentioned in Lemma \ref{lem:rankonelemmadegreed}.

\begin{lemma}\label{lem:Fpreductionbound}
Let $d>1$ be a positive integer and let $C/\QQ$ be a hyperelliptic curve of genus $g > d$, with a rational Weierstrass point, geometrically simple Jacobian with $r \leq 1$, good reduction at a prime $p> d^2 + 3$, and which satisfies condition $(\dagger)$. 


Let $P_{1},\cdots ,P_d \in C^d(\Qbar)$ be a conjugate $d$-tuple with well-behaved uniformizers $z_{P_{1}},\dots , z_{P_{d}}$. 
Then there are at most $3^d$ ordered tuples of unexpected conjugate degree $d$ points in $D_{\overline{P_{1}}} \times \cdots \times D_{\overline{P_{d}}}$, i.e.~with the same reduction as $(P_1,\dots ,P_d)$ modulo $p$.  
\end{lemma}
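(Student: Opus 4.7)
The plan is to invoke Park's tropical bound on zero-dimensional components (Theorem \ref{parkthm}) for a judiciously chosen $d$-tuple of $1$-forms and to control the resulting mixed volume via Stoll's bound on orders of vanishing.

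By Lemma \ref{lem:rankonelemmadegreed}, each unexpected conjugate $d$-tuple $(Q_1,\ldots,Q_d)$ with $\overline{Q_j}=\overline{P_j}$ is a zero-dimensional component of
\[
(C^d)^{\Lambda_C}\cap\bigl(B_{1/d^2}(P_1,z_{P_1})\times\cdots\times B_{1/d^2}(P_d,z_{P_d})\bigr).
\]
Using Lemma \ref{bestuniversalbasis} together with Assumption $(\dagger)$, I would pick linearly independent normalized $\omega_1,\ldots,\omega_d\in\Lambda_C$ with $n(\omega_i,\overline{P_j})=n(\Lambda_C,\overline{P_j})=:n_j$ for all $i,j$, matching the positive-dimensional components of the two zero loci on the polydisk and satisfying the tropical genericity hypothesis of Theorem \ref{parkthm}. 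Every unexpected tuple is then a zero-dimensional component of $V(F_{\omega_1}^d,\ldots,F_{\omega_d}^d)$, and Theorem \ref{parkthm} bounds the number of such components inside $D_{1/d^2}^d\cap(\Cp^\times)^d$ by $\MV(\New_{1/d^2}(F_{\omega_i}^d))$.

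Because $(P_1,\ldots,P_d)$ is itself a conjugate $d$-tuple, $[P_1+\cdots+P_d-d\infty]\in J(\QQ)$, so the constant term of each $F_{\omega_i}^d$ vanishes. Writing $t_j=z_{P_j}$, the series
\[
F_{\omega_i}^d=\sum_{j=1}^d\sum_{n\ge 1}\frac{a_{j,n-1}^{(i)}}{n}\,t_j^n
\]
is supported on the coordinate axes, with first unit-valuation coefficient on axis $j$ appearing at $n=n_j+1$. The main calculation is to show that $\New_{1/d^2}(F_{\omega_i}^d)\subseteq\Pi:=\conv\{e_j,(n_j+1)e_j\colon j=1,\ldots,d\}$. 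A support point $ne_j$ with $n>n_j+1$ being a non-unique minimizer would require some $w_j\ge 1/d^2$ with $v(a_{j,n-1}^{(i)})-v(n)\le(n_j+1-n)w_j$; at the optimal $w_j=1/d^2$ this forces $n-d^2 v(n)\le n_j+1\le 3$, impossible both for $v(n)=0$ (which would force $n\le 3$) and for $v(n)\ge 1$ (which would force $n\ge p>d^2+3$).

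The polytope $\Pi=\{x\in\RR^d_{\ge 0}\colon\sum_j x_j\ge 1,\ \sum_j x_j/(n_j+1)\le 1\}$ has $d$-dimensional volume $(\prod_j(n_j+1)-1)/d!$, so $\MV(\Pi,\ldots,\Pi)=\prod_j(n_j+1)-1$. Stoll's Theorem \ref{stollbound} gives $\sum_{\overline{P}\in C_{\Fp}(\Fpbar)}n(\Lambda_C,\overline{P})\le 2r\le 2$, so each $n_j\le 2$ individually, and the mixed volume is at most $3^d-1<3^d$. Unexpected tuples on the boundary $\{z_{P_j}(Q_j)=0\}$ for some $j$ correspond either to $(P_1,\ldots,P_d)$ itself or to tuples sharing a coordinate with it, and can be absorbed into the count or handled by a lower-dimensional variant of the same argument. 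I expect the principal technical obstacle to be the Newton polygon calculation: ruling out outer vertices $ne_j$ with $n>n_j+1$ relies crucially on $p>d^2+3$, and passing from the abstract Newton polygon to the concrete polytope $\Pi$ requires verifying the inner vertex structure and invoking monotonicity of mixed volume.
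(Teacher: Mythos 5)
Your proposal follows essentially the same route as the paper's proof: isolate the unexpected tuples via Lemmas \ref{lem:rankonelemmadegreed} and \ref{bestuniversalbasis}, bound the orders of vanishing by Theorem \ref{stollbound}, contain the Newton polygons using $p>d^2+3$, and apply Theorem \ref{parkthm} together with the mixed-volume computation of Example \ref{mvexample} to get $3^d-1$ components plus the center tuple. The only differences are cosmetic: you write out the Newton-polygon estimate that the paper cites from Stoll, and you use the marginally sharper polytope with vertices $(n_j+1)e_j$ (mixed volume $\prod_j(n_j+1)-1\le 3^d-1$), which changes nothing in the final bound.
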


\begin{proof}
Using Lemmas \ref{universalbasis} and \ref{bestuniversalbasis}, we can choose linearly independent, normalized forms $\omega_1, \dots,\omega_d \in \Lambda_C$ such that $n(\omega_i,\overline{P_j}) = n(\Lambda_C, \overline{P_j})$ for $i,j \in \{1,\dots,d\}$.  By Theorem \ref{stollbound}, these numbers are at most 2. 

Viewed as a function on $D^d$ via $z_{P_1},\dots,z_{P_d}$, $F_{\omega_1}^d$ is given by 
$$\sum_{i=0}^\infty \frac{a_{1,i}}{i+1}t_1^{i+1} + \cdots + \sum_{i=0}^\infty \frac{a_{d,i}}{i+1}t_d^{i+1}.$$
For each $i\in \brk{1,\dots,d}$, we have that $v(a_{i,j}) =0$ for some $j\leq 2$. Similar statements holds for $F_{\omega_2}^d,\dots,F_{\omega_d}^d$. 

By a standard Newton polygon argument (cf.~proof of \cite[Lemma 6.1 \& Proposition 6.3]{stoll2006independence} using our assumption that $p>d^2 + 3$) applied to each of these sums, we see that $\New_{1/d^2}(F_{\omega_1}^d),\New_{1/d^2}(F_{\omega_2}^d),\dots,$ and $\New_{1/d^2}(F_{\omega_d}^d)$ are contained in the set $$\conv(\{(1,0,\dots,0), (3,0,\dots,0), (0,1,\dots,0), (0,3,\dots,0),\dots,(0,0,\dots,1),(0,0,\dots,3)\}) \subset \RR^d.$$  By Theorem \ref{parkthm} and Example \ref{mvexample}, there are at most $3^d-1$ zero-dimensional components of interest away from $(P_1, \dots,P_d)$. Since $(P_{1}, \dots,P_{d})$ could be a $d$-tuple of unexpected conjugate degree $d$ points, we have at most $3^d$ zero-dimensional components in $D^d$. We are now done by Lemmas \ref{bestuniversalbasis} and \ref{lem:rankonelemmadegreed}.
\end{proof}

\begin{proof}[Proof of Theorem \ref{thm:degreedbound}]
Choose a prime $p>d^2 + 3$. Among all hyperelliptic curves of genus $g$ with a rational Weierstrass point, those with good reduction at $p$ are defined by finitely many congruence conditions on their minimal equations, and thus constitute a positive proportion of all such curves.
Proposition \ref{simple} tells us that asymptotically, $100\%$ of curves in this subfamily have geometrically simple Jacobian, and by Corollary \ref{goodrankcor}, at least 25\% of these curves also have Jacobian rank $r \leq 1$. 
Furthermore, under Assumption \ref{ass:Park}, a positive proportion of these curves satisfy condition $(\dagger)$. 

Let $C$ be such a curve of genus $g$. 
Given a $d$-tuple $(Q_1,\dots,Q_d)$ of conjugate degree $d$ points, the reduction of each $Q_i$ is certainly contained in $C_{\mF_p}(\mF_{p^{m_i}})$ for some $1\leq m_i \leq d$. 
Note that since $C$ is an odd hyperelliptic curve with good reduction at $p$, the size of $C_{\mF_p}(\mF_{p^{m_i}})$ is less than or equal to $2p^{m_i} + 1$. 
Crudely, there are at most $(d\cdot (2p^{d} + 1))^d$ possible reductions for $(Q_1,\dots,Q_d)$ modulo $p$.
By Lemma \ref{lem:Fpreductionbound}, there are at most $3^d$ ordered tuples of unexpected conjugate degree $d$ points with each reduction. 
Thus, we may take $$B_d = (3d\cdot (2p^{d} + 1))^d \leq (3d\cdot (2(2d^2+3)^{d} + 1))^d,$$ by Bertrand's postulate \cite[Tome I, p. 64]{chebyshev}.
\end{proof}

\section{Explicit bounds on the number of unexpected quadratic points}\label{boundingquadratic}
In this section, we prove Theorem \ref{hyperthm}. 

\begin{lemma}\label{rankonelemma}
Let $C/\QQ$ be a hyperelliptic curve of genus $g \geq 3$, with a rational Weierstrass point, geometrically simple Jacobian with $r \leq 1$, and let $p$ be an odd prime of good reduction for $C$. 

Let $P_1, P_2 \in C(\Qbar)$ be either two rational points or a pair of conjugate quadratic points, with well-behaved uniformizers $z_{P_1}, z_{P_2}$.  Let $(Q_1, Q_2)$ be a pair of unexpected conjugate quadratic points with the same reduction as $(P_1,P_2)$.  Then $\{(Q_1,Q_2)\}$ is a zero-dimensional component of $(C^2)^{\Lambda_C}\cap (B_{\frac{1}{2}}(P_1,z_{P_1}) \times B_{\frac{1}{2}}(P_2,z_{P_2}))$.
\end{lemma}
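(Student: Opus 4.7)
The plan is to follow the same three-step outline as Lemma \ref{lem:rankonelemmadegreed}, specialized to $d=2$, while exploiting the sharper disk radius $1/2$ (rather than $1/d^2 = 1/4$) that is available because every point in sight has degree at most $2$ over $\mathbb{Q}$.

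First, I would verify that $(Q_1,Q_2) \in B_{1/2}(P_1, z_{P_1}) \times B_{1/2}(P_2, z_{P_2})$. By Remark \ref{valuationbound}, this reduces to bounding by $2$ the ramification index $e_i$ of $\mathbb{Q}_p(P_i,Q_i)/\mathbb{Q}_p$ for $i=1,2$. Each of $\mathbb{Q}_p(P_i)$ and $\mathbb{Q}_p(Q_i)$ is the completion of a number field of degree at most $2$ at a prime above $p$, so each has ramification index at most $2$; since $p$ is odd, any two ramified quadratic extensions of $\mathbb{Q}_p$ sit together inside a biquadratic extension (generated by $\sqrt{p}$ and a unit) of ramification index exactly $2$. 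Hence $e_i \leq 2$ and $v(z_{P_i}(Q_i)) \geq 1/2$, as required.

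Next, I would apply Lemma \ref{isolatedincurve} with $d=2$ to conclude that $(Q_1, Q_2)$ is an isolated point of $(C^2)^{\Lambda_C}$. The conditions $0 < d < g$ (since $g \geq 3$), geometric simplicity of $J$, and $r \leq 1$ are all given. The one non-obvious hypothesis is non-specialness of $Q_1+Q_2$, which in the sense used in Lemma \ref{isolatedincurve} means that the fiber of $C^2 \to W_2$ is zero-dimensional, equivalently $h^0(\mathcal{O}(Q_1+Q_2)) = 1$. On a hyperelliptic curve of genus $\geq 2$, Clifford's theorem and the uniqueness of the $g^1_2$ show that a degree $2$ effective divisor moves in a positive-dimensional linear system if and only if it is of the form $P + \iota(P)$ for the hyperelliptic involution $\iota$. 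The definition of \emph{unexpected} conjugate quadratic pair recalled in Section \ref{subsec:term} --- that it does not arise as the preimage of a rational point of $\Pone$ --- is precisely the assertion that $Q_2 \neq \iota(Q_1)$, so $Q_1 + Q_2$ is non-special in the required sense.

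Combining the two steps, since $(Q_1, Q_2)$ is isolated in the ambient set $(C^2)^{\Lambda_C}$ and lies in the affinoid polydisk $B_{1/2}(P_1, z_{P_1}) \times B_{1/2}(P_2, z_{P_2})$, it remains isolated in the intersection and hence constitutes a zero-dimensional component, giving the claim. The main obstacle I expect is definitional bookkeeping rather than genuinely new mathematics: cleanly verifying the ramification bound in each sub-case (both $P_i$ rational versus conjugate quadratic, and various splitting behaviors of $p$ in the relevant quadratic fields), and correctly translating ``unexpected'' into the non-specialness hypothesis of Lemma \ref{isolatedincurve} via the uniqueness of the hyperelliptic $g^1_2$.
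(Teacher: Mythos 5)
Your proposal is correct and follows essentially the same route as the paper: the paper likewise bounds the ramification of $\Qp(P_1,P_2,Q_1,Q_2)$ by $2$ (via the compositum of the quadratic extensions of $\Qp$, $p$ odd) to place $(Q_1,Q_2)$ in the $B_{1/2}$ polydisk, and then deduces isolatedness exactly as you do, by citing Lemma \ref{lem:rankonelemmadegreed}, whose proof is the combination of Remark \ref{valuationbound} and Lemma \ref{isolatedincurve} that you unpack. Your explicit verification of non-specialness of $Q_1+Q_2$ via the uniqueness of the hyperelliptic $g^1_2$ is a point the paper leaves implicit, and it is correct.
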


\begin{proof}
The field $\Qp(P_1,P_2,Q_1,Q_2)$ is certainly contained in the compositum of the three quadratic extensions of $\Qp$.  Since $p$ is odd, that compositum has ramification degree $e=2$ over $\Qp$. 
The result now follows from Lemma \ref{lem:rankonelemmadegreed}. 
\end{proof}

\begin{lemma} \label{newtonbounds}
Suppose $p \neq 2$ or $5$, and $\sum_{i=0}^{\infty} a_it^i \in \Cp\fps{t}$ is a power series with integral coefficients. If $v(a_0) =0$, then for $f(t)=\sum_{i=0}^\infty \frac{a_i}{i+1}t^{i+1}$, we have $\New_1(f) =\emptyset$ and $\New_{1/2}(f) \subset [1,3]$.  If $v(a_1) =0$ or $v(a_2)=0$, then $\New_1(f), \New_{1/2}(f) \subset [1,3]$.
\end{lemma}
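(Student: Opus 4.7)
The plan is to translate the conclusion into Newton-polygon language and verify it by a short finite check. Write $f(t)=\sum_{j\geq 1}b_jt^j$ with $b_j=a_{j-1}/j$, so that the fundamental estimate is $v(b_j)=v(a_{j-1})-v(j)\geq -v(j)$, together with $v(j)\leq\log_p j$. In one variable, $\New_m(f)$ coincides with the projection to the $x$-axis of those edges of the Newton polygon of $f$ (the lower convex hull of the points $(j,v(b_j))$) whose slope is at most $-m$; since slopes along the polygon are nondecreasing from left to right, this projection is an interval, and the lemma reduces to showing that no edge of slope $\leq -m$ has right endpoint strictly greater than $3$.

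The core step is by contradiction. Suppose some $j_1\geq 4$ lies on an edge of slope $-s$ with $s\geq m$, and let $L(j)=v(b_{j_1})+s(j-j_1)$ be the affine function whose graph contains that edge. Since the polygon lies on or above $L$, we have $L(k)\leq v(b_k)$ for every $k\geq 1$ with $a_{k-1}\neq 0$. Each of the three hypotheses of the lemma guarantees a preferred value of $k\in\{1,2,3\}$ at which $v(b_k)$ is known explicitly, namely $0$, $0$, or $-v(3)$ respectively. Combining $L(k)\leq v(b_k)$ with $v(b_{j_1})\geq -v(j_1)$ yields
\[
v(j_1)\;\geq\;m(j_1-k)-v(b_k),
\]
which simplifies to $v(j_1)\geq m(j_1-1)$, $v(j_1)\geq m(j_1-2)$, and $v(j_1)\geq m(j_1-3)+v(3)$ in the respective cases.

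The final step is to check that these inequalities fail for every $j_1\geq 4$ when $p\neq 2,5$. For $m=1$ in the first case, the sharper inequality $v(j_1)\geq j_1-1$ fails at every $j_1\geq 2$ (since $v(j_1)\leq\log_p j_1<j_1-1$ for $p\geq 3$), yielding $\New_1(f)=\emptyset$. For $m=1/2$, a finite inspection of $j_1\in\{4,\ldots,9\}$ shows that the forced divisibility $p^N\mid j_1$ has no solutions except at $(j_1,p)=(4,2)$ and $(5,5)$, both of which are excluded by hypothesis; and for $j_1\geq 10$ the estimate $v(j_1)\leq\log_p j_1\leq\log_3 j_1<(j_1-3)/2$ closes the argument uniformly in all three cases (since $m(j_1-k)\geq (j_1-3)/2$ whenever $k\leq 3$, $m\geq 1/2$, and we can absorb the harmless additive constant $v(3)\leq 1$ into the slack).

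The main obstacle will be purely organizational: one must apply the supporting-line inequality $L(k)\leq v(b_k)$ at an index $k$ where $v(b_k)$ is finite and known, which in each hypothesis is the distinguished $k\in\{1,2,3\}$ singled out. The exclusions $p\neq 2,5$ correspond exactly to the borderline primes at $j_1=4$ and $j_1=5$, where the elementary bound $v(j)\leq\log_p j$ is tight. No deeper tool than convexity of the Newton polygon and the lower bound $v(b_j)\geq -v(j)$ is needed.
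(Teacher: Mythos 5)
Your proof is correct and follows essentially the same route as the paper's: both come down to comparing the valuation of a high-index term, via $v(a_{j_1-1}/j_1)\ge -v(j_1)$, against the distinguished small-index term $k\in\{1,2,3\}$ singled out by the hypothesis, which forces $v(j_1)\ge m(j_1-k)-v(b_k)$ and is then contradicted for all $j_1\ge 4$ (and for all $j_1\ge 2$ when $m=1$, $k=1$) once $p\neq 2,5$ — the supporting-line/Newton-polygon packaging is just a reformulation of the paper's direct term comparison. The only blemishes are cosmetic: your $L$ should have slope $-s$ (the displayed inequality $v(j_1)\ge m(j_1-k)-v(b_k)$ is nonetheless the correct consequence), and under your weakened case-2 inequality the finite inspection also admits $(j_1,p)=(8,2)$ in addition to $(4,2)$ and $(5,5)$, which is harmless since $p\neq 2$ is assumed.
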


\begin{proof}
We begin with the case where $v(a_0)=0$.  Then for any $w \geq 1$ and $i > 0$, we have $v(\frac{a_0}{1}) + 1\cdot w = w < v(\frac{a_i}{i+1}) + (i+1)\cdot w$. To see this, note that the right-hand side is no smaller than $-v(i+1) + (i+1)w$, and $v(i+1)$ is strictly less than $i$ for $i>0$ and $p > 2$.

For $w \geq \frac{1}{2}$, things can be different: for example, suppose $p=3$, $v(a_2)=0$, and $w=\frac{1}{2}$.  Then $\frac{1}{2}=v(\frac{a_2}{3}) + 3\cdot w= v(\frac{a_0}{1}) + 1\cdot w$.  But past $i=2$, strict inequality holds, so we have $\New_{1/2}(f) \subset [1,3]$.  The rest of the cases proceed similarly. 
\end{proof}

\begin{lemma}\label{Fpreductionbound}
Let $C/\QQ$ be a hyperelliptic curve of genus $g \geq 3$, with a rational Weierstrass point, geometrically simple Jacobian with $r \leq 1$, good reduction at 3, which satisfies condition $(\dagger)$. 

Let $P_1, P_2 \in C(\Qbar)$ be either two rational points or a pair of conjugate quadratic points, with $\overline{P_1}, \overline{P_2} \in C_{\mathbb{F}_3}(\mathbb{F}_3)$. Then there are at most $8$ ordered pairs $(Q_1, Q_2)$ of unexpected conjugate quadratic points in $D_{\overline{P_1}} \times D_{\overline{P_2}}$ that are not equal to $(P_1, P_2)$.
\end{lemma}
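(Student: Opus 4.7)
The plan is to specialize the argument of Lemma \ref{lem:Fpreductionbound} to $d=2$ and $p=3$, replacing the crude Newton polygon bound used there (which assumed $p > d^2+3$) with the sharper input of Lemma \ref{newtonbounds}, which is valid at $p=3$.

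First, I would invoke Lemmas \ref{universalbasis} and \ref{bestuniversalbasis} (applicable since $r \leq 1 \leq g-2$) to choose linearly independent normalized forms $\omega_1,\omega_2 \in \Lambda_C$ satisfying $n(\omega_i,\overline{P_j}) = n(\Lambda_C,\overline{P_j})$ for all $i,j \in \{1,2\}$, and such that the positive-dimensional components of $V(F_{\omega_1}^2, F_{\omega_2}^2)$ in the bidisk $B_{1/2}(P_1,z_{P_1}) \times B_{1/2}(P_2,z_{P_2})$ coincide with those of $(C^2)^{\Lambda_C}$ there. Stoll's bound (Theorem \ref{stollbound}) yields $n(\Lambda_C,\overline{P_j}) \leq 2r \leq 2$ for each $j$, so $\overline{\omega_i}$ vanishes to order at most $2$ at each $\overline{P_j}$.

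Second, since $[P_1+P_2-2\infty] \in J(\QQ)$, each $F_{\omega_i}^2$ vanishes at $(P_1,P_2)$, so its expansion in the well-behaved uniformizers reads
\[
F_{\omega_i}^2 \;=\; \sum_{k=0}^\infty \frac{a_{1,k}^{(i)}}{k+1}\, z_{P_1}^{k+1} \;+\; \sum_{k=0}^\infty \frac{a_{2,k}^{(i)}}{k+1}\, z_{P_2}^{k+1},
\]
with integral coefficients, and for each pair $(i,j)$ the smallest $k$ with $v(a_{j,k}^{(i)}) = 0$ equals $n(\Lambda_C,\overline{P_j}) \leq 2$. Applying Lemma \ref{newtonbounds} separately to each single-variable summand (using $p=3$, which satisfies $p \neq 2,5$), each such 1-variable piece has Newton polygon contained in $[1,3]$. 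Because the support of $F_{\omega_i}^2$ lies on the two coordinate axes, $\New_{1/2}(F_{\omega_i}^2) \subset \conv\{(1,0),(3,0),(0,1),(0,3)\}$ for $i=1,2$.

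Third, Example \ref{mvexample} with $d=2$ and $a=3$ computes the mixed volume of two copies of this polytope to be $3^2-1 = 8$. Condition $(\dagger)$ lets me further arrange the $\omega_i$ so that the tropical hypothesis of Theorem \ref{parkthm} is met, and the required pure-power terms $c\,t_j^N$ in each $F_{\omega_i}^2$ exist because each one-variable summand has a unit coefficient at some $k \leq 2$. Theorem \ref{parkthm} then bounds the number of zero-dimensional components of $V(F_{\omega_1}^2, F_{\omega_2}^2) \cap D_{1/2}^2 \cap (\Cp^\times)^2$ by $8$. Finally, by Lemma \ref{rankonelemma}, each unexpected conjugate quadratic pair $(Q_1,Q_2)$ with reductions $(\overline{P_1},\overline{P_2})$ distinct from $(P_1,P_2)$ produces exactly one such zero-dimensional component (with both $z_{P_i}$-coordinates nonzero), giving the claimed bound of $8$.

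The main technical point will be verifying Newton polygon control at the borderline prime $p=3$: when $v(a_{j,0}^{(i)}) > 0$ one must check $v(a_{j,1}^{(i)}) = 0$ or $v(a_{j,2}^{(i)}) = 0$, which follows from $n(\Lambda_C,\overline{P_j}) \leq 2$ and our choice of $\omega_i$, and is precisely the case handled by Lemma \ref{newtonbounds}. A secondary subtlety is that because $F_{\omega_i}^2$ is a sum of a function of $z_{P_1}$ and a function of $z_{P_2}$, its support lies on the two coordinate axes, so $\New_{1/2}(F_{\omega_i}^2)$ is exactly the convex hull of the two 1-variable Newton polygons positioned on those axes --- which justifies the explicit polytope used in the mixed-volume computation.
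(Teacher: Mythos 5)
Your proposal is correct and follows essentially the same route as the paper: choose $\omega_1,\omega_2$ via Lemmas \ref{universalbasis} and \ref{bestuniversalbasis}, bound the vanishing orders by Theorem \ref{stollbound}, control $\New_{1/2}(F_{\omega_i}^2)$ via Lemma \ref{newtonbounds}, and conclude with the mixed volume $3^2-1=8$ from Example \ref{mvexample}, Theorem \ref{parkthm} under $(\dagger)$, and Lemma \ref{rankonelemma}. The extra details you supply (the pure-power terms $c\,t_j^N$ and the observation that $(Q_1,Q_2)\neq(P_1,P_2)$ forces both $z_{P_i}$-coordinates to be nonzero) are points the paper leaves implicit, not a different argument.
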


\begin{proof}
Using Lemmas \ref{universalbasis} and \ref{bestuniversalbasis}, choose linearly independent, normalized forms $\omega_1, \omega_2 \in \Lambda_C$ such that $n(\omega_i,\overline{P_j}) = n(\Lambda_C, \overline{P_j})$ for $i,j \in \{1,2\}$.  By Theorem \ref{stollbound}, these numbers are at most 2.  
Viewed as a function on $D^2$ via $z_{P_1}$ and $z_{P_2}$, $F_{\omega_1}^2$ is given by 
$$\sum_{i=0}^\infty \frac{a_i}{i+1}t_1^{i+1} + \sum_{i=0}^\infty \frac{b_i}{i+1}t_2^{i+1}.$$
A similar statement holds for $F_{\omega_2}^2$.

By construction of $\omega_1$ (and similarly for $\omega_2$), we have $v(a_i) =0$ for some $i\leq 2$, and $v(b_j)=0$ for some $j \leq 2$.  Now by Lemma \ref{newtonbounds} applied to each of these two sums, we see that both $\New_{1/2}(F_{\omega_1}^2)$ and $\New_{1/2}(F_{\omega_2}^2)$ are contained in the set $$\conv(\{(1,0), (3,0), (0,1), (0,3)\}) \subset \RR^2.$$  By Theorem \ref{parkthm} and Example \ref{mvexample}, there are at most $3^2-1 =8$ zero-dimensional components of interest away from $(P_1, P_2)$.  We are done by Lemmas \ref{bestuniversalbasis} and \ref{rankonelemma}.
\end{proof}

\begin{lemma}\label{Fptworeductionbound}
Let $C/\QQ$ be a hyperelliptic curve of genus $g \geq 3$, with a rational Weierstrass point, geometrically simple Jacobian with $r \leq 1$,  good reduction at 3, which satisfies condition $(\dagger)$. 

Let $P_1, P_2 \in C(\Qbar)$ be a pair of conjugate quadratic points, with $\overline{P_1}, \overline{P_2} \in C_{\mathbb{F}_3}(\mathbb{F}_9) \setminus C_{\mathbb{F}_3}(\mathbb{F}_3)$.  If $n(\Lambda_C, \overline{P_1}) =1$, there are at most $8$ pairs $(Q_1, Q_2)$ of unexpected conjugate quadratic points in $D_{\overline{P_1}} \times D_{\overline{P_2}}$ that are not equal to $(P_1, P_2)$.  If $n(\Lambda_C, \overline{P_1}) = 0$, there are no such pairs.
\end{lemma}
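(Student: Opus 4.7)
The plan is to parallel the proof of Lemma \ref{Fpreductionbound}, while exploiting two features of the setting where $\overline{P_1}, \overline{P_2}$ lie in $\mathbb{F}_9 \setminus \mathbb{F}_3$. First, any $Q_i$ reducing to $\overline{P_i}$ must be defined over the unramified quadratic extension of $\mathbb{Q}_3$, so $\mathbb{Q}_3(P_1, P_2, Q_1, Q_2)$ is unramified; hence Remark \ref{valuationbound} with $e = 1$ gives $v(z_{P_i}(Q_i)) \geq 1$, letting us work at radius $m = 1$ rather than $m = 1/2$. Second, since $\overline{P_2} = \Frob(\overline{P_1})$ are distinct and $n(\Lambda_C, \cdot)$ is Galois-equivariant, Theorem \ref{stollbound} forces $2\, n(\Lambda_C, \overline{P_1}) \leq 2r \leq 2$, so $n(\Lambda_C, \overline{P_1}) \in \{0, 1\}$, matching the statement. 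Exactly as in Lemmas \ref{rankonelemma} and \ref{Fpreductionbound}, the pair $(Q_1, Q_2)$ corresponds to an isolated zero-dimensional component of $(C^2)^{\Lambda_C}$ in $B_1(P_1, z_{P_1}) \times B_1(P_2, z_{P_2})$ via Lemma \ref{isolatedincurve}. I would then pick linearly independent forms $\omega_1, \omega_2 \in \Lambda_C$ via Lemmas \ref{universalbasis} and \ref{bestuniversalbasis} together with Assumption $(\dagger)$, and bound the count of such components through Theorem \ref{parkthm}.

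For the case $n(\Lambda_C, \overline{P_1}) = 1$, the expansions satisfy $v(a_1) = 0$, so Lemma \ref{newtonbounds} applied to each univariate summand of $F_{\omega_i}^2 = f_1(t_1) + f_2(t_2)$ yields $\New_1(F_{\omega_i}^2) \subset \conv\{(1,0), (3,0), (0,1), (0,3)\}$. Monotonicity of mixed volumes together with Example \ref{mvexample} (with $a = 3$ and $d = 2$) bounds the mixed volume by $3^2 - 1 = 8$; Theorem \ref{parkthm} therefore gives at most $8$ isolated zero-dimensional components in $D_1^2 \cap (\mathbb{C}_3^\times)^2$. Since $(P_1, P_2)$ corresponds to $(0, 0) \notin (\mathbb{C}_3^\times)^2$, this translates to at most $8$ unexpected pairs different from $(P_1, P_2)$, as claimed.

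For the case $n(\Lambda_C, \overline{P_1}) = 0$, the leading terms of $F_{\omega_i}^2$ at $(P_1, P_2)$ are $a_0^{(i)} t_1 + b_0^{(i)} t_2$ with all four leading coefficients units. A direct 2-variable Newton polygon computation then shows that $\New_1(F_{\omega_i}^2)$ collapses to the line segment $\conv\{(1,0), (0,1)\}$ for both $i = 1, 2$. Since identical line segments have zero Minkowski area in $\RR^2$, their mixed volume vanishes, and Theorem \ref{parkthm} yields no isolated zero-dimensional components in $D_1^2 \cap (\mathbb{C}_3^\times)^2$, giving the claimed conclusion.

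The main obstacle is verifying the collapse of the Newton polygon in the $n = 0$ case: one must check that for every $(w_1, w_2) \in [1, \infty)^2$ and every candidate pair of monomial exponents, no tie among valuations occurs except the already-noted tie between $(1, 0)$ and $(0, 1)$ at $(w_1, w_2) = (w, w)$. The crucial estimate is that for $p = 3$ and $k \geq 1$, the monomial $\tfrac{a_k^{(i)}}{k+1}\, t_j^{k+1}$ has valuation at least $-v(k+1) + (k+1) w_j > w_j$ when $w_j \geq 1$, precluding any participation of higher-order terms in a minimum-valuation tie. Once this is in hand, the mixed volume vanishes and the rest of the argument proceeds as a direct transcription of the $n = 1$ case.
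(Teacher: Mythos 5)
Your proposal is correct and follows essentially the same route as the paper: unramifiedness of $\QQ_3(P_1,Q_1)$ lets you work with $\New_1$, conjugacy of the reductions plus Stoll's bound forces the common value $n(\Lambda_C,\overline{P_1})\in\{0,1\}$, and the $n=1$ case is handled exactly as in Lemma \ref{Fpreductionbound}, giving the mixed-volume bound $3^2-1=8$. Your treatment of the $n=0$ case is if anything slightly more careful than the paper's terse appeal to the first statement of Lemma \ref{newtonbounds}: you correctly observe that the cross-variable tie makes the bivariate Newton polygons the parallel segments $\conv\{(1,0),(0,1)\}$ rather than empty, and you dispose of it via their vanishing mixed volume, which is the right completion of the same argument.
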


\begin{proof}
The proof is similar to that of Lemma \ref{Fpreductionbound}, with two changes.  First, one can consider $\New_1$ instead of $\New_{1/2}$, using Remark \ref{valuationbound} and the fact that $\QQ_3(P_1,Q_1)$ is unramified.  Second, note that since $P_1$ and $P_2$ reduce to (necessarily distinct) points outside of $C_{\mathbb{F}_3}(\mathbb{F}_3)$, we know that $P_1$ and $P_2$ remain conjugate over $\QQ_3$.  Thus their reductions are conjugate over $\mathbb{F}_3$, so we have $n(\Lambda_C, \overline{P_1}) = n(\Lambda_C, \overline{P_2})$.
By Theorem \ref{stollbound}, this common value can only be 0 or 1.
\end{proof}

\begin{lemma}\label{goodfamily}
For each $g\geq 3$, there exists a congruence family of genus $g$ hyperelliptic curves with a rational Weierstrass point, such that any curve $C$ in the family has good reduction at 3, and satisfies $C_{\mathbb{F}_3}(\mathbb{F}_3)=\{\overline{\infty}\}$ and $C_{\mathbb{F}_3}(\mathbb{F}_9) = \{\overline{\infty}, (0, \pm \alpha), (1, \pm \alpha), (2, \pm \alpha)\}$, with $\alpha \in (\mathbb{F}_9 \setminus \mathbb{F}_3)$.
\end{lemma}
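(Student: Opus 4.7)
My plan is to exhibit, for each $g \geq 3$, a single monic polynomial $f_g(x) \in \mathbb{F}_3[x]$ of degree $2g+1$ satisfying the desired reduction properties; the congruence family is then the set of all $f \in \mathbb{Z}[x]$ of the prescribed form (leading term $x^{2g+1}$, vanishing $x^{2g}$-coefficient) whose coefficients reduce to those of $f_g$ modulo $3$. Fix $\alpha \in \mathbb{F}_9$ with $\alpha^2 = 2$; then the non-squares in $\mathbb{F}_9^\times$ are $\{1+\alpha,\, 1+2\alpha,\, 2+\alpha,\, 2+2\alpha\}$. The required reduction translates into: $f_g$ is separable, has no $x^{2g}$-coefficient, satisfies $f_g(x_0) = 2$ for every $x_0 \in \mathbb{F}_3$ (producing the pairs $(x_0, \pm\alpha)$ as the only non-$\overline{\infty}$ affine $\mathbb{F}_3$-points, since $\alpha^2 = 2$), and $f_g(x_0)$ is a non-square in $\mathbb{F}_9^\times$ for every $x_0 \in \mathbb{F}_9 \setminus \mathbb{F}_3$.

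I rewrite $f_g(x) = (x^3-x)\,q_g(x) + 2$ with $q_g$ monic of degree $2g-2$ and vanishing $x^{2g-3}$-coefficient, which makes the $\mathbb{F}_3$-condition automatic. Using the Frobenius identity $(a+b\alpha)^3 = a - b\alpha$, one computes $x_0^3 - x_0 = b\alpha$ for $x_0 = a+b\alpha$ with $b \neq 0$, so the remaining condition reduces to requiring $\alpha \cdot q_g(a+\alpha) + 2$ to be a non-square in $\mathbb{F}_9^\times$ for each $a \in \{0,1,2\}$; the conditions at the Frobenius-conjugate points $a+2\alpha$ follow automatically since $q_g$ has $\mathbb{F}_3$-coefficients.

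For odd $g \geq 3$, the uniform choice $q_g(x) = x^{2g-2}$, yielding $f_g(x) = x^{2g+1} - x^{2g-1} + 2$, works: computing $(a+\alpha)^{2g-2}$ using the order-$8$ structure of $\mathbb{F}_9^\times$ shows that $\alpha \cdot (a+\alpha)^{2g-2} + 2$ lies in $\{2+\alpha,\, 2+2\alpha\}$ in every case, and separability follows by case analysis on $g \bmod 3$, confirming that all roots of $f_g'$ in $\overline{\mathbb{F}_3}$ lie in $\mathbb{F}_9$ and that $f_g$ does not vanish at any of them. For even $g \geq 6$, I invoke a dimension count: the valid $q_g$ form an affine $\mathbb{F}_3$-space of dimension $2g-3$, and the evaluation map $q_g \mapsto (q_g(\alpha), q_g(1+\alpha), q_g(2+\alpha)) \in \mathbb{F}_9^3 \cong \mathbb{F}_3^6$ has linear kernel given by polynomials of degree $\leq 2g-4$ divisible by $\prod_{x \in \mathbb{F}_9 \setminus \mathbb{F}_3}(t-x) = (t^9-t)/(t^3-t) = t^6+t^4+t^2+1$, hence of $\mathbb{F}_3$-dimension $2g-9$, so evaluation is surjective. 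I choose $q_g$ landing at $(1,1,1)$, giving $f_g(a+\alpha) = 2+\alpha$ for each $a$, and within the positive-dimensional fiber (dimension $2g-9 \geq 3$) the separability locus is Zariski-open and its nonemptiness is witnessed by an explicit construction, such as $q_g(x) = x^{2g-2} + x^{2g-6} + x^{2g-8}$ for $g=6$.

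The remaining small case $g = 4$ I handle with the explicit polynomial $f_4(x) = (x^3-x)(x^6 + x^2 + 1) + 2 = x^9 + 2x^7 + x^5 + 2x + 2$, verifying its conditions by direct evaluation at $\alpha, 1+\alpha, 2+\alpha$ and computing $\gcd(f_4, f_4') = 1$ via the Euclidean algorithm. The main technical obstacle is the even-$g$ case: no clean uniform closed form presents itself, and the combined enforcement of the non-square and separability conditions on a restricted fiber requires either dimensional flexibility (available for $g \geq 6$) or an explicit small-case verification (used for $g = 4$). The hardest step is producing an explicit separable witness in the fiber for even $g$, which I expect to require the most care.
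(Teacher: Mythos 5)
Your reduction of the lemma to finding, for each $g$, a separable $f_g \in \mathbb{F}_3[x]$ of degree $2g+1$ with no $x^{2g}$-term, $f_g(\mathbb{F}_3)=\{2\}$, and $f_g(\mathbb{F}_9\setminus\mathbb{F}_3)$ contained in the non-squares of $\mathbb{F}_9^\times$ is exactly the right translation, and your odd-$g$ family $f_g=x^{2g+1}+2x^{2g-1}+2$ checks out (the values $2+\alpha,\,2+2\alpha$ and the derivative analysis by $g \bmod 3$ are both correct), as do the explicit cases $g=4$ and $g=6$. The genuine gap is the even case $g\geq 8$. Your dimension count does show the evaluation map onto $\mathbb{F}_9^3$ is surjective, but the remaining step --- ``within the positive-dimensional fiber the separability locus is Zariski-open and its nonemptiness is witnessed by an explicit construction'' --- is not a proof. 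The fiber is a \emph{finite} set of $3^{2g-9}$ polynomials over $\mathbb{F}_3$; a nonempty Zariski-open subset of an affine space over $\mathbb{F}_3$ need not contain any $\mathbb{F}_3$-point, and the discriminant hypersurface has degree growing with $g$, so no counting bound rescues this. You would need either a witness for every even $g$ (you give one only for $g=6$, and you yourself flag this as the hardest step) or a uniform argument; as written, the lemma is unproved for all even $g\geq 8$. A minor further slip: your $g=6$ witness $q_6(x)=x^{10}+x^6+x^4$ does not lie in the fiber over $(1,1,1)$ --- it evaluates to $(2,2,2)$ at $\alpha,\,1+\alpha,\,2+\alpha$ --- which happens to be harmless since $2+2\alpha$ is also a non-square, but it shows the ``explicit construction'' has not been pinned down.

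The paper closes exactly this loop by being uniform in $g$ from the start: for each congruence class of $g$ (mod $4$, and mod $3$ in one case) it fixes a trinomial or quadrinomial $f_g$ whose exponents are chosen so that, by $x^{k}=x^{k+8r}$ on $\mathbb{F}_9$, $f_g$ defines the \emph{same function} on $\mathbb{F}_9$ as one small base case, so the point conditions are checked once per class; separability is then obtained for all $g$ in the class simultaneously via Swan's formula for the discriminant of a trinomial $x^n+ax^k+b$ (and a short gcd argument in the quadrinomial case $g\equiv 0 \bmod 4$). Your odd-$g$ construction is a legitimate variant of this, but to repair the even case you should do the same thing there: replace the dimension count plus genericity by an explicit family with exponents fixed modulo $8$ (as in the paper) and a separability verification, e.g.\ via Swan's formula or a derivative/gcd computation, valid for all even $g$ at once.
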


\begin{proof}
For each $g$, consider the families of hyperelliptic curves whose reduction mod 3 is given by:
\begin{align*}
y^2 &= f_g(x) = x^{2g+1}+2x^{9}+2 & \text{ for } g\equiv 1 \mod 4,& \\
y^2 &= f_g(x) = x^{2g+1}+2x^{15}+2 &\text{ for } g\equiv 2 \mod 4, &\\
y^2 &= f_g(x) = x^{2g+1}+2x^{5}+2 & \text{ for } g\equiv 3 \mod 4, &\\
y^2 &= f_g(x) = x^{2g+1} + x^3 + x + 2 & \text{ for } g\equiv 0 \mod 4 \text{  and  } g \equiv 0,\,1 \mod 3, &\\
y^2 &= f_g(x) = x^{2g+1} + x^9 + x^3 +  2 & \text{ for } g\equiv 0 \mod 4 \text{  and  } g \equiv 2\mod 3 &.
\end{align*}
For $g=3$, 
one checks directly that $f_g(x)$ represents as few squares as is possible: $f(\mathbb{F}_3) = \{2\}$, and $f(\mathbb{F}_9 \setminus \mathbb{F}_3) \subset (\mathbb{F}_9 \setminus \mathbb{F}_9^2)$.  This is equivalent to the lemma's condition on $\mathbb{F}_3$- and $\mathbb{F}_9$-points. 
In general, for $g \equiv 3\mod 4$, note that $f_g(x)=x^{2g+1}+2x^5+2$ defines the exact same function as $x^7+2x^5+2$ on $\mathbb{F}_9$, as $x^k $ and $x^{k + 8r}$ define the same function on $\mF_{9}$ for natural numbers $k,\, r$. 
The same argument works for the other values of $g$.


We conclude by showing that these polynomials are square-free over $\mF_3$. Over any field, the discriminant of a trinomial $x^n + ax^k + b$ is $$(-1)^{\frac{n(n-1)}{2}}b^{k-1}[n^{n_1}b^{n_1-k_1}+(-1)^{n_1+1}(n-k)^{n_1-k_1}k^{k_1}a^{n_1}]^d,$$
where $d=(n,k)$ and $n = n_1d, k=k_1d$ \cite[Theorem 2]{swan}.  Thus for $g \equiv 1, 2, 3\mod 4$, the discriminant of $f_g$ in $\mF_3$ is $$\pm [(2g+1)+(2g+1-k)^2k\cdot 2],$$ with $k=1, 7,$ or $5$, respectively.  This is non-zero in $\mF_3$ for any value of $g$.

For $g \equiv 0\mod 4$, we use a different method. It suffices to show $f_g(x)$ and $f_g'(x)$ have no common factors.  If $g \equiv 1\mod 3$, then $f_g'(x)=1$, so this is clear. If $g \equiv 2\mod 3$, then $f'_g(x) = -x^{2g}$, and so this also is clear. Lastly if $g \equiv 0\mod 3$, then $f_g'(x) = x^{2g}+1$, so any common factor would divide $f_g(x)-xf_g'(x)=x^3+2=(x+2)^3$.  Thus we see $f_g$ and $f_g'$ are coprime.  
\end{proof}

\begin{proof}[Proof of Theorem \ref{hyperthm}]
For a given value of $g$, the family given by Proposition \ref{simple} and Lemma \ref{goodfamily} comprises a positive proportion of all hyperelliptic curves with a rational Weierstrass point, since the latter is defined by finitely many congruence conditions.  By Corollary \ref{goodrankcor}, at least 25\% of the curves in this family have rank $r \leq 1$; this is still a positive proportion of all the curves.  
Furthermore, under Assumption \ref{ass:Park}, a positive proportion of these curves satisfy condition $(\dagger)$. 

Let $C$ be such a curve.  Any pair of conjugate quadratic points on $C$ that reduce to $\mathbb{F}_3$-points will have to lie in $D_{\overline{\infty}} \times D_{\overline{\infty}}$.  We may choose $P_1 = P_2 = \infty$ and apply Lemma \ref{Fpreductionbound} to conclude there are at most eight ordered pairs $(Q_1, Q_2)$ of unexpected conjugate quadratic points in this residue class.  But since $\overline{Q_1}=\overline{Q_2}$, each \textit{unordered} pair is counted twice, so there are at most four unordered pairs of such quadratic points.

If the minimal Weierstrass model of $C$ is $y^2=f(x)$, note that (for some choice of square root) the pair of \textit{expected} quadratic points $(i, \pm \sqrt{f(i)})$ reduces to $(\overline{i}, \pm \alpha)$ for each $\overline{i}=0,1,2.$  In $D_{(\overline{i},\alpha)} \times D_{(\overline{i},-\alpha)}$, we may choose $P_1 = (i, \sqrt{f(i)}), P_2 = (i, -\sqrt{f(i)})$, and apply Lemma \ref{Fptworeductionbound}.  If $n(\Lambda_C, (\overline{i},\alpha)) = 0$, we conclude there are no unexpected pairs in this residue class.  If $n(\Lambda_C, (\overline{i},\alpha)) = 1$, there are at most eight.

By Theorem \ref{stollbound}, at most one value of $\overline{i}=0,1,2$ will have $n(\Lambda_C, (\overline{i},\alpha)) = n(\Lambda_C, (\overline{i},-\alpha))= 1$, and all the others will be 0.  Thus, there are at most $4 + 8 = 12$ unordered pairs of unexpected conjugate quadratic points.
\end{proof}

\section{Explicit bounds on the number of cubic points}\label{boundingcubic}
In this section, we prove Theorem \ref{cubicthm}. 

\begin{lemma} \label{newtonboundscubic}
Suppose $\sum_{i=0}^{\infty} a_it^i \in \mC_3\fps{t}$ is a power series with integral coefficients. If $v(a_0) = 0$, $v(a_1) =0$, or $v(a_2)=0$, then for $f(t)=\sum_{i=0}^\infty \frac{a_i}{i+1}t^{i+1}$, we have $\New_{1/3}(f) \subset [1,3]$. 
\end{lemma}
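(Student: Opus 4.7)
The approach is the direct analog of Lemma \ref{newtonbounds} for $p = 3$ and threshold $m = 1/3$. Writing the $j$-th term of $f$ (for $j \geq 1$) as $\tfrac{a_{j-1}}{j}\,t^{j}$, its coefficient has valuation $v(a_{j-1}) - v(j)$; evaluated on a point of valuation $w$, the contribution is $v(a_{j-1}) - v(j) + jw$. To prove $\New_{1/3}(f) \subseteq [1, 3]$, it suffices to show that for every $w \geq 1/3$, the minimum of these valuations over $j \geq 1$ is attained only at indices $j \in \{1, 2, 3\}$, since then the convex hull of indices achieving the minimum with a tie lies in $[1, 3]$.

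The plan is, in each of the three hypothesized cases, to pick a pivot $j_0 \in \{1, 2, 3\}$ with $v(a_{j_0-1}) = 0$, so that the $j_0$-th term contributes exactly $j_0 w - v(j_0)$. Using integrality $v(a_{j-1}) \geq 0$ together with $w \geq 1/3$, the $j$-th term strictly exceeds the $j_0$-th term provided
\[
j - j_0 > 3 v(j) - 3 v(j_0).
\]
This reduces the lemma to checking, for $j \geq 4$, the three inequalities
\[
j - 1 > 3v(j), \qquad j - 2 > 3v(j), \qquad j > 3v(j),
\]
corresponding to pivots $j_0 = 1, 2, 3$ respectively.

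All three inequalities follow from the bound $v(j) \leq \log_3 j$ valid at $p = 3$, together with the elementary fact that $3^{j/3} > j$ for $j \geq 4$. The mildly delicate point, and the main thing to be careful about, is that strict inequality must be preserved at the boundary $w = 1/3$: for the $j_0 = 3$ case the worst case is $j = 3^k$, which gives $3^k > 3k$ for $k \geq 2$, while $k = 1$ would give $j = 3$, excluded by the hypothesis $j \geq 4$; the $j_0 = 1$ and $j_0 = 2$ cases are handled identically by a brief divisibility check ruling out equality $j - j_0 = 3 v(j) - 3v(j_0)$ for $j \geq 4$. With these case checks carried out, the lemma follows exactly as in the analogous step of Lemma \ref{newtonbounds}.
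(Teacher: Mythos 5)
Your argument is correct and is essentially the paper's own proof: pivot on the term among the first three whose numerator has valuation zero, use integrality of the remaining coefficients, and reduce to the elementary inequalities $j - j_0 > 3v(j) - 3v(j_0)$ for $j \geq 4$ at the boundary $w = 1/3$, which the paper settles by ``a short induction argument on $i$'' and you settle by the $\log_3$ bound plus divisibility. One small caution: for the pivots $j_0 = 1,2$ the bound $v(j) \leq \log_3 j$ alone fails at small $j$ (e.g.\ $j=4$, where $3\log_3 4 > j-2$), so the integrality/divisibility step you mention is genuinely needed there --- it is not merely about ruling out boundary ties --- but once phrased that way (if $3 \mid j$ then $3v(j) < j$ forces $3v(j) \leq j-3$; if $3 \nmid j$ then $v(j)=0$) the checks go through.
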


\begin{proof}
We proceed as in Lemma \ref{newtonbounds}. If $v(a_0) = 0$, then for $w\geq 1/3$ and $i>2$, we have that $v(\frac{a_0}{1}) + 1\cdot w = w < v(\frac{a_i}{i+1}) + (i+1)\cdot w$. Recall that the right-hand side is no smaller than $-v(i+1) + (i+1)w$, and so it suffices to prove that for $i>2$, $w < -v(i+1) + (i+1)w$. A short induction argument on $i$ using the non-Archimedean properties of $v$ and taking $w = 1/3$ yields the desired result. The remaining cases follow in a similar fashion. 
\end{proof}

\begin{lemma}\label{Fpreductionboundcubic}
Let $C/\QQ$ be a hyperelliptic curve of genus $g \geq 4$, with a rational Weierstrass point, geometrically simple Jacobian with $r \leq 1$, good reduction at 3, which satisfies condition $(\dagger)$.  

Let $P_1, P_2, P_3 \in C(\QQ)$ be three rational points. Then there are at most $26$ ordered triples $(Q_1, Q_2,Q_3)$ of conjugate cubic points in $D_{\overline{P_1}} \times D_{\overline{P_2}} \times D_{\overline{P_3}}$. 
\end{lemma}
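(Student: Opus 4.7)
The plan is to mirror the argument of Lemma \ref{Fpreductionbound}, adapted from the quadratic to the cubic setting. Three changes are needed: work with three $1$-forms instead of two, localize in a polydisk of inner radius $1/3$ instead of $1/2$, and use the mixed volume $3^3 - 1 = 26$ instead of $3^2 - 1 = 8$.

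First I would produce the relevant $1$-forms. Applying Lemmas \ref{universalbasis} and \ref{bestuniversalbasis} to $\overline{P_1}, \overline{P_2}, \overline{P_3} \in C_{\mathbb{F}_3}(\mathbb{F}_3)$, I would choose linearly independent, normalized $\omega_1, \omega_2, \omega_3 \in \Lambda_C$ with $n(\omega_j, \overline{P_i}) = n(\Lambda_C, \overline{P_i})$ for all $i, j \in \{1,2,3\}$; by Theorem \ref{stollbound} each of these integers is at most $2$. The hypothesis $(\dagger)$ ensures the $\omega_j$ may simultaneously be arranged so that the tropical transversality assumption of Theorem \ref{parkthm} holds for the $F_{\omega_j}^3$'s on the relevant polydisk.

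Next I would localize. Any triple $(Q_1, Q_2, Q_3)$ of conjugate cubic points reducing to $(\overline{P_1}, \overline{P_2}, \overline{P_3})$ sits over a field $\mathbb{Q}_3(P_i, Q_i) = \mathbb{Q}_3(Q_i)$, using rationality of $P_i$, whose ramification index over $\mathbb{Q}_3$ is at most $3$. Remark \ref{valuationbound} therefore forces $v(z_{P_i}(Q_i)) \geq 1/3$ for each $i$, placing $(Q_1, Q_2, Q_3)$ in the polydisk $B_{1/3}(P_1, z_{P_1}) \times B_{1/3}(P_2, z_{P_2}) \times B_{1/3}(P_3, z_{P_3})$. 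Because $Q_1 + Q_2 + Q_3$ has degree $3 < g$ and $J$ is geometrically simple, no such divisor is special (as in Proposition \ref{prop:finitenessdegreedpoints}, a special effective cubic divisor would have to contain a subdivisor $P + \iota(P)$, which is impossible for three Galois-conjugate irrational points). Lemma \ref{isolatedincurve} then guarantees that $(Q_1, Q_2, Q_3)$ is a zero-dimensional component of $(C^3)^{\Lambda_C}$ intersected with the polydisk above.

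Finally I would run the Newton polygon and mixed volume calculation. Expanding each $F_{\omega_j}^3$ in the coordinates $(t_1, t_2, t_3) = (z_{P_1}, z_{P_2}, z_{P_3})$ yields a sum of three one-variable power series, each with an integral coefficient of valuation $0$ at some index $\leq 2$. Lemma \ref{newtonboundscubic} (whose $m = 1/3$ is tailored to this situation) then gives
\[
\New_{1/3}(F_{\omega_j}^3) \subset \conv\{e_1, 3e_1, e_2, 3e_2, e_3, 3e_3\} \subset \mathbb{R}^3
\]
for each $j$, and Example \ref{mvexample} with $a = 3$, $d = 3$ evaluates the relevant mixed volume as $26$. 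Because each $Q_i$ is irrational while each $P_i$ is rational, $z_{P_i}(Q_i) \neq 0$, so every cubic triple contributes to the count of zero-dimensional components in $D_{1/3}^3 \cap (\mathbb{C}_3^{\times})^3$ produced by Theorem \ref{parkthm} and Lemma \ref{bestuniversalbasis}, yielding the bound $26$. The main technical point is the replacement of the generic $1/d^2 = 1/9$ bound from Lemma \ref{lem:rankonelemmadegreed} by the sharper $1/3$, which crucially uses rationality of the $P_i$ and is exactly what matches the hypothesis of Lemma \ref{newtonboundscubic} and keeps the Newton polygons inside the polytope for which Example \ref{mvexample} computes the mixed volume.
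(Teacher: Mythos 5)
Your proposal is correct and follows essentially the same route as the paper: choose $\omega_1,\omega_2,\omega_3$ via Lemmas \ref{universalbasis} and \ref{bestuniversalbasis}, bound vanishing orders by Theorem \ref{stollbound}, apply Lemma \ref{newtonboundscubic} to place the Newton polygons $\New_{1/3}(F^3_{\omega_j})$ in $\conv\{e_1,3e_1,e_2,3e_2,e_3,3e_3\}$, and conclude with Theorem \ref{parkthm} and Example \ref{mvexample} giving the mixed volume $3^3-1=26$. Your explicit justification of the radius $1/3$ via the ramification bound and rationality of the $P_i$, and your direct appeal to Lemma \ref{isolatedincurve} with the non-specialness of a conjugate cubic divisor, is a welcome clarification of a step the paper handles by citing Lemma \ref{lem:rankonelemmadegreed}, but it is not a different argument.
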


\begin{proof}
As in Lemma \ref{Fpreductionbound}, we use Lemmas \ref{universalbasis} and \ref{bestuniversalbasis} to choose linearly independent, normalized forms $\omega_1, \omega_2, \omega_3 \in \Lambda_C$ such that $n(\omega_i,\overline{P_j}) = n(\Lambda_C, \overline{P_j})  $ for $i,j \in \{1,2,3\}$.  By Theorem \ref{stollbound}, these numbers are at most 2.  
Viewed as a function on $D^3$ via $z_{P_1}$, $z_{P_2}$, and $z_{P_3}$, $F_{\omega_1}^3$ is given by 
$$\sum_{i=0}^\infty \frac{a_i}{i+1}t_1^{i+1} + \sum_{i=0}^\infty \frac{b_i}{i+1}t_2^{i+1} + \sum_{i=0}^\infty \frac{c_i}{i+1}t_3^{i+1}.$$
A similar statement holds for $F_{\omega_2}^3$ and $F_{\omega_3}^3$.

By construction of $\omega_1$ (and similarly for $\omega_2$ and $\omega_3$), we have $v(a_i) =0$ for some $i\leq 2$, $v(b_j)=0$ for some $j \leq 2$, and $v(c_k) = 0$ for some $k\leq 2$.  Now by Lemma \ref{newtonboundscubic} applied to each of these three sums, we see that $\New_{1/3}(F_{\omega_1}^3)$, $\New_{1/3}(F_{\omega_2}^3)$, and $\New_{1/3}(F_{\omega_3}^3)$ are contained in the set $$\conv(\{(1,0,0),(3,0,0),(0,1,0,),(0,3,0), (0,0,1), (0,0,3)\}) \subset \RR^3.$$  By Theorem \ref{parkthm} and Example \ref{mvexample}, there are at most $3^3-1 =26$ zero-dimensional components of interest.  We are done by Lemmas \ref{bestuniversalbasis} and \ref{lem:rankonelemmadegreed}.
\end{proof}

\begin{lemma}\label{Fptworeductionboundcubic}
Let $C/\QQ$ be a hyperelliptic curve of genus $g \geq 4$, with a rational Weierstrass point, geometrically simple Jacobian with $r \leq 1$, good reduction at 3, which satisfies condition $(\dagger)$.  

Let $P_1, P_2 \in C(\Qbar)$ be conjugate quadratic points, with $\overline{P_1}, \overline{P_2} \in C_{\mathbb{F}_3}(\mathbb{F}_{9})\setminus C_{\mathbb{F}_3}(\mathbb{F}_3)$, and $P_3 \in C(\QQ)$ a rational point. If $n(\Lambda_C, \overline{P_1}) =1$, there are at most $26$ ordered triples $(Q_1, Q_2,Q_3)$ of conjugate cubic points in $D_{\overline{P_1}} \times D_{\overline{P_2}} \times D_{\overline{P_3}}$.  If $n(\Lambda_C, \overline{P_1}) = 0$, there are no such triples.
\end{lemma}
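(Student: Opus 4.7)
The plan is to mirror the proof of Lemma \ref{Fpreductionboundcubic}, incorporating the two modifications that transformed Lemma \ref{Fpreductionbound} into Lemma \ref{Fptworeductionbound} in the quadratic setting. Concretely, I would choose linearly independent normalized forms $\omega_1, \omega_2, \omega_3 \in \Lambda_C$ via Lemma \ref{bestuniversalbasis}, bound the Newton polytopes $\New_1(F^3_{\omega_j})$ of the associated locally analytic functions, and then apply Theorem \ref{parkthm} combined with Lemma \ref{lem:rankonelemmadegreed}.

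The first modification is a ramification analysis. Since $\overline{P_1}, \overline{P_2} \in C_{\mF_3}(\mF_9) \setminus C_{\mF_3}(\mF_3)$, the extension $\QQ_3(P_1, P_2)$ is the unramified quadratic extension of $\QQ_3$. If $(Q_1, Q_2, Q_3)$ is a conjugate cubic triple whose reductions match $(\overline{P_1}, \overline{P_2}, \overline{P_3})$ modulo $3$, then $3$ must decompose in the cubic field $\QQ(Q_1)$ into primes of residue degrees $1$ and $2$, so completing at either prime yields an unramified local extension of $\QQ_3$. Hence the entire compositum is unramified, so $e=1$, and Remark \ref{valuationbound} yields $v(z_{P_i}(Q_j)) \geq 1$. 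In particular, one can work with $\New_1$ in place of $\New_{1/3}$, using an analogue of Lemma \ref{newtonboundscubic} for weight $\geq 1$ that additionally records $\New_1(f) = \emptyset$ whenever $v(a_0)=0$, by the same Newton polygon bookkeeping used in Lemma \ref{newtonbounds}.

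The second modification exploits the Frobenius-conjugacy of $\overline{P_1}$ and $\overline{P_2}$, which forces $n(\Lambda_C, \overline{P_1}) = n(\Lambda_C, \overline{P_2})$. By Theorem \ref{stollbound}, the total sum over $\Fpbar$-points is bounded by $2r \leq 2$, so this common value is either $0$ or $1$. When it equals $1$, Lemma \ref{bestuniversalbasis} produces $\omega_1, \omega_2, \omega_3$ with $n(\omega_j, \overline{P_i}) = n(\Lambda_C, \overline{P_i})$ for all $i,j \in \{1,2,3\}$, and the extended Newton polygon lemma bounds each $\New_1(F^3_{\omega_j})$ inside $\conv\{(1,0,0),(3,0,0),(0,1,0),(0,3,0),(0,0,1),(0,0,3)\}$. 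Monotonicity of mixed volumes together with Theorem \ref{parkthm}, Example \ref{mvexample}, and Lemma \ref{lem:rankonelemmadegreed} then yields the bound of $3^3-1=26$, exactly as in Lemma \ref{Fpreductionboundcubic}.

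When $n(\Lambda_C, \overline{P_1}) = 0$, each chosen form satisfies $v(a_0)=v(b_0)=0$, so the strengthened Newton polygon lemma forces the only contributions to $\New_1(F^3_{\omega_j})$ from the $t_1$- and $t_2$-axes to be the single vertices $(1,0,0)$ and $(0,1,0)$. As in the $n=0$ case of the quadratic Lemma \ref{Fptworeductionbound}, this should collapse the Newton polytope to a subset of $\RR^3$ of $3$-dimensional volume zero, making the mixed volume vanish and ruling out unexpected triples. The hard part will be here: rigorously ruling out that the $t_3$-axis contributions inflate the Newton polytope to three dimensions. This requires combining Stoll's bound on $n(\Lambda_C, \overline{P_3})$ with the extended single-variable analysis to show that no multi-index of the form $(0,0,k+1)$ with $k \geq 1$ enters $\New_1$, so that the polytope collapses in the same way that the segment $\conv\{(1,0),(0,1)\}$ arises in the quadratic case.
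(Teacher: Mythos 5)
Your treatment of the case $n(\Lambda_C,\overline{P_1})=1$ is essentially the paper's proof: choose $\omega_1,\omega_2,\omega_3$ via Lemma \ref{bestuniversalbasis}, use Frobenius-conjugacy of $\overline{P_1},\overline{P_2}$ together with Theorem \ref{stollbound} to see the common value is $0$ or $1$ (and, in the value-$1$ case, that $n(\Lambda_C,\overline{P_3})=0$), confine each Newton polytope to $\conv\{(1,0,0),(3,0,0),(0,1,0),(0,3,0),(0,0,1),(0,0,3)\}$, and conclude with Theorem \ref{parkthm}, Example \ref{mvexample}, and Lemma \ref{lem:rankonelemmadegreed} that there are at most $26$ triples. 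Your ramification observation --- that the two $\mF_9$-reductions force $3$ to split in the cubic field with residue degrees $2$ and $1$, so all three local completions are unramified --- is correct and in fact slightly sharper than the paper, which only invokes unramifiedness in the first two coordinates and falls back on $\New_{1/3}$ and Lemma \ref{newtonboundscubic} in the third; either variant gives the same polytope and the same bound, and the ``extended'' single-variable lemma you ask for is just Lemma \ref{newtonbounds} itself (it covers $p=3$).

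The genuine gap is exactly where you flag it, and the repair you propose does not close it. When $n(\Lambda_C,\overline{P_1})=n(\Lambda_C,\overline{P_2})=0$, Theorem \ref{stollbound} only gives $n(\Lambda_C,\overline{P_1})+n(\Lambda_C,\overline{P_2})+n(\Lambda_C,\overline{P_3})\le 2$, so $n(\Lambda_C,\overline{P_3})$ may well be $1$ or $2$; it is not forced to vanish. In that situation every normalized $\omega\in\Lambda_C$ has $v(c_0)>0$ in its $t_3$-expansion, the dominant $t_3$-term has index $(0,0,2)$ or $(0,0,3)$, and already at weight $w_3=1$ ties among $c_0t_3$, $\tfrac{c_1}{2}t_3^2$, $\tfrac{c_2}{3}t_3^3$ can place two or even three $t_3$-indices in $\New_1(F^3_{\omega_j})$. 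Combined with the vertices $(1,0,0)$ and $(0,1,0)$ (which do occur, at weights with $w_1=w_2$ below the $t_3$-minimum), the polytope can be three-dimensional --- for instance $\conv\{(1,0,0),(0,1,0),(0,0,2),(0,0,3)\}$ has triple mixed volume $1$, and $\conv\{(1,0,0),(0,1,0),(0,0,1),(0,0,3)\}$ has mixed volume $2$ --- so ``Stoll plus the single-variable analysis'' cannot show that only the index $(0,0,1)$ survives, and the mixed-volume bound you obtain is positive rather than zero. Note that the paper itself disposes of this case in one line, citing only the first statement of Lemma \ref{newtonbounds} (emptiness of $\New_1$ in the first two coordinates), so the subtlety you identified is real; but to actually conclude ``no such triples'' one needs an input beyond what you propose --- some way of controlling the third coordinate (or of exploiting that a genuine conjugate triple has $v(z_1)=v(z_2)$, as in the proof of Lemma \ref{Fptworeductionbound}) rather than hoping Stoll's bound eliminates the $t_3$-axis contributions.
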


\begin{proof}
In the first two coordinates, we can consider $\New_1$ using Remark \ref{valuationbound} and the fact that $\QQ_3(P_1,Q_1)$ is unramified, whereas we have to consider $\New_{1/3}$ in the last coordinate. We also have that $n(\Lambda_C, \overline{P_1}) = n(\Lambda_C, \overline{P_2})$.  Theorem \ref{stollbound} asserts that $n(\Lambda_C,\overline{P_1}) + n(\Lambda_C,\overline{P_2}) + n(\Lambda_C,\overline{P_3}) \leq 2$, so we have two cases. If $n(\Lambda_C,\overline{P_1}) =  0$, then there are no ordered triples by first statement of Lemma \ref{newtonbounds}. If $n(\Lambda_C,\overline{P_1}) = 1$, then the result follows from the second statement of Lemma \ref{newtonbounds}, Lemma \ref{newtonboundscubic}, and the same computation as in Lemma \ref{Fpreductionboundcubic}. 
\end{proof}

\begin{lemma}\label{Fpthreereductionboundcubic}
Let $C/\QQ$ be a hyperelliptic curve of genus $g \geq 4$, with a rational Weierstrass point, geometrically simple Jacobian with $r \leq 1$, good reduction at 3, which satisfies condition $(\dagger)$. 

 Let $P_1, P_2, P_3 \in C(\Qbar)$ be three conjugate cubic points, with $\overline{P_1}, \overline{P_2}, \overline{P_3} \in C_{\mathbb{F}_3}(\mathbb{F}_{27})\setminus C_{\mathbb{F}_3}(\mathbb{F}_3)$. Then there are no ordered triples $(Q_1, Q_2,Q_3)$ of conjugate cubic points in $D_{\overline{P_1}} \times D_{\overline{P_2}} \times D_{\overline{P_3}}$ not equal to $(P_1, P_2,P_3)$.
\end{lemma}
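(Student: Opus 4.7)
The plan is to leverage the Frobenius symmetry among $\overline{P_1},\overline{P_2},\overline{P_3}$ to force the Stoll invariants $n(\Lambda_C,\overline{P_i})$ to vanish, and then to show via a Newton polytope computation that the mixed-volume bound from Theorem \ref{parkthm} collapses to zero.

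First, since each $\overline{P_i}\in \mathbb{F}_{27}\setminus \mathbb{F}_3$, the three $\overline{P_i}$ comprise a single orbit under $\Gal(\overline{\mathbb{F}_3}/\mathbb{F}_3)$. The reduction of any normalized $\omega\in\Lambda_C$ is defined over $\mathbb{F}_3$, so its order of vanishing at $\overline{P_i}$ is Galois-invariant. Hence $n(\Lambda_C,\overline{P_1}) = n(\Lambda_C,\overline{P_2}) = n(\Lambda_C,\overline{P_3})$, and combining this with $\sum_i n(\Lambda_C,\overline{P_i}) \leq 2r \leq 2$ from Theorem \ref{stollbound} forces each of these to equal $0$. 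I would also note that because $\overline{P_i}\in\mathbb{F}_{27}\setminus\mathbb{F}_3$, the completion $\mathbb{Q}_3(P_i)$ must coincide with the unique unramified cubic extension of $\mathbb{Q}_3$, and likewise for $\mathbb{Q}_3(Q_i)$. Thus $\mathbb{Q}_3(P_i,Q_i)$ is unramified of degree $3$, and Remark \ref{valuationbound} yields $v(z_{P_i}(Q_i))\geq 1$.

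Then I would invoke Lemmas \ref{universalbasis} and \ref{bestuniversalbasis} with $m=1$ to produce linearly independent normalized $\omega_1,\omega_2,\omega_3 \in \Lambda_C$ with $n(\omega_i,\overline{P_j})=0$ for all $i,j$, such that the positive-dimensional components of the $\Lambda_C$-locus in $B_1(P_1,z_{P_1}) \times B_1(P_2,z_{P_2}) \times B_1(P_3,z_{P_3})$ agree with those of $V(F_{\omega_1}^3, F_{\omega_2}^3, F_{\omega_3}^3)$. Each $F_{\omega_i}^3$ has no constant term (because $[P_1+P_2+P_3-3\infty]\in J(\mathbb{Q})$), and decomposes as a separated three-variable sum
\[
F_{\omega_i}^3 = \sum_{j\geq 0}\frac{a_{i,j}}{j+1}t_1^{j+1} + \sum_{j\geq 0}\frac{b_{i,j}}{j+1}t_2^{j+1} + \sum_{j\geq 0}\frac{c_{i,j}}{j+1}t_3^{j+1}
\]
with integral coefficients and with $v(a_{i,0})=v(b_{i,0})=v(c_{i,0})=0$. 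A direct extension of the Newton polygon argument of Lemma \ref{newtonbounds} (using $p=3$ and weights $w_1,w_2,w_3\geq 1$) shows that the only monomials attaining the minimum valuation are $t_1, t_2, t_3$ themselves, so $\New_1(F_{\omega_i}^3) \subseteq \conv\{e_1,e_2,e_3\}$.

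This $2$-simplex lies in the affine plane $x_1+x_2+x_3=1$ of $\mathbb{R}^3$, so any Minkowski sum $\lambda_1 Z_1 + \lambda_2 Z_2 + \lambda_3 Z_3$ of three such simplices remains $2$-dimensional, and $\MV(\New_1(F_{\omega_1}^3),\New_1(F_{\omega_2}^3),\New_1(F_{\omega_3}^3))=0$. Theorem \ref{parkthm}, invoked through Assumption $(\dagger)$, then rules out zero-dimensional components of $V(F_{\omega_1}^3,F_{\omega_2}^3,F_{\omega_3}^3) \cap D_1^3 \cap (\mathbb{C}_p^\times)^3$. Finally, if $(Q_1,Q_2,Q_3)\neq (P_1,P_2,P_3)$ were a conjugate cubic triple in the stated residue disks, the pairwise distinctness of the $\overline{P_i}$ would make reduction a bijection between the Galois orbits $\{P_1,P_2,P_3\}$ and $\{Q_1,Q_2,Q_3\}$, so $Q_i=P_i$ for a single $i$ would propagate to equality of ordered tuples; hence $Q_i\neq P_i$ for every $i$, putting $(z_{P_1}(Q_1),z_{P_2}(Q_2),z_{P_3}(Q_3))$ in $D_1^3 \cap (\mathbb{C}_p^\times)^3$. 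By Lemma \ref{lem:rankonelemmadegreed} this would be an isolated component, contradicting the vanishing mixed volume. The main obstacle is the Newton polygon computation in three variables with $m=1$, but this is only a direct analog of Lemma \ref{newtonbounds}, relying on the inequality $(j+1)w_i > -v(j+1) + w_k$ for $j\geq 1$ and $w_i,w_k\geq 1$ at $p=3$.
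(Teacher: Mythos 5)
Your proof is correct and follows essentially the same route as the paper's: unramifiedness of $\mathbb{Q}_3(P_i,Q_i)$ lets one work with $\New_1$ in every coordinate, conjugacy of the (distinct) reductions together with Stoll's bound forces $n(\Lambda_C,\overline{P_i})=0$ for all $i$, and the resulting Newton data rules out further zero-dimensional components. You merely flesh out steps the paper leaves terse --- the containment of each $\New_1(F^3_{\omega_i})$ in the $2$-simplex $\conv\{e_1,e_2,e_3\}$ with vanishing mixed volume, and the observation that any other conjugate triple has $Q_i\neq P_i$ for every $i$ and hence lies in $D_1^3\cap(\mathbb{C}_p^\times)^3$ --- though note that the single-Frobenius-orbit claim should be justified as the paper does, from the $P_i$ being globally conjugate with no reduction in $C_{\mathbb{F}_3}(\mathbb{F}_3)$, not merely from each $\overline{P_i}$ lying in $\mathbb{F}_{27}\setminus\mathbb{F}_3$.
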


\begin{proof}
The proof is similar to that of Lemma \ref{Fpreductionboundcubic} with two changes.  First, one can consider $New_1$ instead of $New_{1/3}$ in all the coordinates, again using Remark \ref{valuationbound} and the fact that $Q_3(P_1,Q_1)$ is unramified. Second, note that since $P_1$, $P_2$, and $P_3$ reduce to (necessarily distinct) points outside of $C_{\mathbb{F}_3}(\mathbb{F}_3)$, we know that $P_1,\, P_2$, and $P_3$ remain conjugate over $\QQ_3$.  Thus their reductions are conjugate over $\mathbb{F}_3$, so we have $n(\Lambda_C, \overline{P_1}) = n(\Lambda_C, \overline{P_2}) = n(\Lambda_C, \overline{P_3})$.
By Theorem \ref{stollbound}, this common value can only be 0, and then the result follows from Lemma \ref{newtonbounds}. 
\end{proof}

\begin{proof}[Proof of Theorem \ref{cubicthm}]
For a given value of $g$, the family given by Proposition \ref{simple} and Lemma \ref{goodfamily} comprises a positive proportion of all hyperelliptic curves with a rational Weierstrass point, since the latter is defined by finitely many congruence conditions.  By Corollary \ref{goodrankcor}, at least 25\% of the curves in this family have rank $r \leq 1$; this is still a positive proportion of all the curves.  
Furthermore, under Assumption \ref{ass:Park}, a positive proportion of these curves satisfy condition $(\dagger)$. 

Let $C$ be such a curve.  
Any triple of conjugate cubic points on $C$ that reduce to $\mathbb{F}_3$-points will have to lie in $D_{\overline{\infty}} \times D_{\overline{\infty}} \times D_{\overline{\infty}}$.  We may choose $P_1 = P_2 = P_3  = \infty$ and apply Lemma \ref{Fpreductionboundcubic} to conclude there are at most 26 ordered triples $(Q_1, Q_2,Q_3)$ of conjugate cubic points in this residue class.  But since $\overline{Q_1}=\overline{Q_2} = \overline{Q_3}$, each \textit{unordered} triple is overcounted by a factor of $6$, so there are at most $\floor{26/6} = 4$ unordered triples of such cubic points.

If the minimal Weierstrass model of $C$ is $y^2=f(x)$, note that (for some choice of square root) the pair of quadratic points $(i, \pm \sqrt{f(i)})$ reduces to $(\overline{i}, \pm \alpha)$ for each $\overline{i}=0,1,2.$  
Any triple of conjugate cubic points on $C$ where two of the points reduce to $(\mathbb{F}_9\setminus\mF_3)$-points will have to lie in $D_{(\overline{i},\alpha)} \times D_{(\overline{i},-\alpha)} \times D_{\overline{\infty}}$ for some $\overline{i}$. 
In $D_{(\overline{i},\alpha)} \times D_{(\overline{i},-\alpha)} \times D_{\overline{\infty}} $, we may choose $P_1 = (i, \sqrt{f(i)}), \, P_2 = (i, -\sqrt{f(i)}),$ and $P_3 = \infty$  and apply Lemma \ref{Fptworeductionboundcubic} with the values $n(\Lambda_C, (\overline{i},\alpha))$, $n(\Lambda_C, (\overline{i},-\alpha))$, and $n(\Lambda_C, \overline{\infty})$ to count ordered triples. 

The last case is when a triple of conjugate cubic points on $C$ reduces to (necessarily distinct) $(\mathbb{F}_{27}\setminus\mF_3)$-points. In this setting, Lemma \ref{Fpthreereductionboundcubic} asserts that there are no triples in this residue class away from their centers. Since $C$ is hyperelliptic and has good reduction at 3, any \textit{unordered} triple of conjugate cubic points  (over $\mF_3$) will have to lie over an unordered triple of cubic points of $\mP^1_{\mathbb{F}_3}$, of which there are only $((3^3 + 1)-(3+1))/3 = 8$.



Using Theorem \ref{stollbound}, we get the worst bound on the number of 0-dimensional components in all residue classes by assuming that for some $\overline{i}=0,1,2$, $n(\Lambda_C, (\overline{i},\alpha)) = n(\Lambda_C, (\overline{i},-\alpha))= 1$, and all the others will be 0. To conclude, there are at most $$4 + 26 + 8 = 38$$ unordered triples of conjugate cubic points. 
\end{proof}

\section*{Acknowledgments} 
We thank Johan de Jong, Bastian Haase, Joe Kramer-Miller, Aaron Landesman, Jennifer Park, Bjorn Poonen, Joe Rabinoff, and David Zureick-Brown for helpful discussions. 
We also thank Eric Katz and Jerry Wang for useful comments on an earlier draft. 
This research began at the ``Chip Firing and Tropical Curves" summer graduate school, organized by Matt Baker, David Jensen, and Sam Payne at the Mathematical Sciences Research Institute. The first author's work was partially done with the support of National Science Foundation grant DMS-1502553.  Also via the first author, this project has received funding from the European Research Council (ERC) under the European Union's Horizon 2020 research and innovation programme (grant agreement No 715747).

\bibliographystyle{amsalpha}
\def\bibfont{\small}
\bibliography{GMbib.bib} 
\end{document}